\begin{document}

\title{On the Structure of Frames and Equiangular Lines over Finite Fields and their Connections to Design Theory}

\author{Ian Jorquera\thanks{Corresponding author: \texttt{ian.jorquera@colostate.edu}} \thanks{Department of Mathematics, Colorado State University, Fort Collins, CO} \and Emily J.\ King\thanks{Department of Mathematics, Colorado State University, Fort Collins, CO}}

\maketitle

\begin{abstract}
This paper concerns frames and equiangular lines over finite fields. We find a necessary and sufficient condition for systems of equiangular lines over finite fields to be equiangular tight frames (ETFs). As is the case over subfields of $\mathbb{C}$, it is necessary for the Welch bound to be saturated, but there is an additional condition required involving sums of triple products. We also prove that similar to the case over $\mathbb{C}$, collections of vectors are similar to a regular simplex essentially when the triple products of their scalar products satisfy a certain property. Finally, we investigate switching equivalence classes of frames and systems of lines focusing on systems of equiangular lines in finite orthogonal geometries with maximal incoherent sets, drawing connections to combinatorial design theory.
\end{abstract}

\section{Introduction}
An important problem in fields as diverse as compressed sensing~\cite{BFMW13}, digital fingerprinting~\cite{MQKF13}, quantum state tomography~\cite{RBkSC04,FHS17}, multiple description coding~\cite{StH03,MeDa14,Welch}, and discrete geometry~\cite{Toth65} is to find arrangements of lines though the origin in $d$-dimensional space that are maximally spread apart, i.e., collections of lines whose smallest pairwise included angle is as large as possible. In certain cases, like when the lines are associated to an equiangular tight frame, the pairwise interior angle of such configurations is constant.

The classical theory focuses on lines in $\F^d$ where $\F$ is $\R$ or $\C$. In these cases a line $\ell$ can be represented by a vector $\phi$ which spans the line $\ell$. The non-obtuse angle $\theta$ between two lines $\ell_1$ and $\ell_2$ which are represented by the non-zero vectors $\phi_1$ and $\phi_2$ may be computed using the magnitude squared of the standard dot product $\absip{\phi_1}{\phi_2}^2=\norm{\phi_1}^2\norm{\phi_2}^2\cos^2(\theta)$. So, a system of lines $(\ell_j)_{j=1}^n$ in $\F^d$ with equal-norm representatives $(\phi_j)_{j=1}^n$ are equiangular if and only if there exists some $b\in\F$ such that $\absip{\phi_j}{\phi_k}^2=b$ for all distinct $j,k$. Equiangular systems of lines over $\R^d$ have been shown to have many connections to different objects in combinatorial designs theory, such as strongly regular graphs and two-graphs \cite{LEMMENS1973494, VaSei66}. Since then much work has been done to understand these connections leading to advancements in both fields \cite{taylortwographs, FICKUS201854}.  Combinatorial methods have also been used to characterize (e.g., switching equivalence \cite{chien_characterization_2016}) and construct equiangular systems in $\bC^d$ (e.g., difference sets~\cite{XZG05} and Steiner systems~\cite{FMT12}).

A bound on the number $n$ of equiangular lines which can be packed in $\F^d$ is know as Gerzon's bound (cf.~\cite{LEMMENS1973494}): $n\leq d(d+1)/2$ for $\F=\R$ and $n\leq d^2$ for $\bF=\bC$. In the real case Gerzon's bound is only known to be saturated when $d=2,3,7$ or $23$, and it is conjectured by Gillespie, that these are the only dimensions which saturate Gerzon's bound \cite{gillespie-2018-equiangular}. In a finite field analog of real vector spaces, an equivalent bound is saturated in infinity many dimensions \cite{greaves_frames_2021-1} but only for fields with small characteristic. Also in contrast to the real case, over $\bC$ it is conjectured (Zauner's conjecture) that $d^2$ equiangular lines exist in $\bC^d$ for all $d$~\cite{Zauner1999,Zauner2011}. Currently no infinite family of equiangular lines is known; $d^2$ equiangular lines in $\bC^d$ have been explicitly constructed in all dimensions $d \leq 53$ with sporadic further dimensions up to $5,799$ (as of March 2025 \cite{appleby2025constructive}).

A second important bound, called the relative bound, is on the maximum number of equiangular lines with respect to a fixed choice of the parameter $b\in [0,\frac{1}{d})$: A system of $n$ equiangular lines with $\absip{\phi_j}{\phi_k}^2=b$ exists in $\F^d$ for $\F=\R$ or $\C$ only if $n\leq \frac{d(1-b)}{1-bd}$. Conditions on equality may be given in terms of frame theory. Let $(\ell_j)_{j=1}^n$ be an equiangular system of lines in $\F^d$, with equal-norm representatives $(\phi_j)_{j=1}^n$. We can construct a matrix where each vector representative is the column of the $d\times n$ matrix $\Phi=\begin{bmatrix}\phi_1&\dots&\phi_n\end{bmatrix}$. A collection of lines in $\R^d$ or $\C^d$ saturates the relative bound if and only if $\Phi$ is a tight frame, meaning $\Phi\Phi^*=cI$, and we call $\Phi$ an equiangular tight frame (ETF). 
Frames have been of particular interest due to their many applications to a variety of fields ranging from wavelets~\cite{MR836025}, SICs in quantum information theory~\cite{RBkSC04}, and compressed sensing~\cite{BFMW13}.
Due to the importance of ETFs much of the work in finite dimensional frame theory is on the existence and construction of ETFs of certain sizes, and many constructions are combinatorial in nature.

This paper builds off previous work on frames over finite fields from \cite{greaves_frames_2021,greaves_frames_2021-1,iverson_note_2021}. Other authors, such as \cite{bodmann_frame_2009}, have also investigated frames for binary vector spaces under the Hamming distance. Frames over finite fields have been shown to have many structural overlaps with frames over $\R$, and $\C$, in addition to vastly different behaviors, including the existence of infinite families of frames which saturate Gerzon's bound. A main goal of exploring frame theory over more generalized fields is to find structural connections to make progress towards the resolution of conjectures, like the Gillespie and Zauner Conjectures, concerning frames over fields of characteristic zero.  However, throughout the paper, we also present a number of explicit examples (\cref{ex:ranknotiff,ex:twonotswitch,ex:discrisweird,ex:naiweird,ex:welchweird,ex:weridsimplices,ex:incohind,ex:weridsimplices2}) of vectors in over finite fields which behave in ways counter intuitive to researchers accustomed to working over characteristic zero. 

In \cref{sec:arbitraryfields,sec:frames,ssec:lines} we give an overview of frame theory over arbitrary fields using the weaker notion of Hermitian scalar products instead of inner products used in frame theory over Hilbert spaces. \cref{sec:switching} looks at an equivalence of systems of equiangular lines, known as switching equivalence, which strongly corresponds to what is known in the complex setting, except with the addition of isotropic vectors, vectors that behave like zero with respect to the scalar product. \cref{sec:combinatorics-two-graphs} then presents some of the combinatorial connections to frames in orthogonal geometries, the finite field analog to real vector spaces.

\cref{sec:beigntight} focuses on properties of tight frames and investigates the situations in which equiangular systems of lines are equiangular tight frames. In \cref{thm:theresult}, we provide an additional necessary and sufficient condition such that saturating the Welch bound means that a set of equiangular lines forms an ETF, and in \cref{lem:etfiffregulartwograph} we prove a corollary to Theorem 4.3 in \cite{greaves_frames_2021-1} where we show a connection between ETFs in orthogonal geometries to regular two-graphs.

In the last two sections we investigate the situations in which ETFs have subsets of their vectors whose triple products are all equal. In \cref{sec:simplex} we show in \cref{thm:iffsimplex} that just as in the real and complex settings, equal triple products can determine the existence of regular simplices. Likewise such sets can be considered to be incoherent sets of regular two-graphs which result in quasi-symmetric 2-designs, and in the case where Gerzon's bound is saturated, 4-designs.

\section{Frames over Arbitrary Fields and Combinatorial Designs}
\subsection{Foundations Remixed}\label{sec:arbitraryfields}
Throughout this paper we are motivated by questions in frame theory over arbitrary fields, but we
will pay special attention to finite fields.
Let $\F$ be a field with a field involution 
$(-)^\sigma:\F\ra \F$. If $\F$ is finite we will denote the field by the number 
of elements: $\F_q$ being the field with $q=p^\ell$ elements, with $p$ a prime, up to isomorphism.
If $\F=\F_{q^2}$ for $q$ a prime power, then the only involutions are the 
identity and the Frobenius involution defined as $a^\sigma=a^q$. 
In all other finite fields, the only involution is the identity.
A more thorough background on finite fields can be found in \cite{lidl_finite_1996}.

Define $\F^\times$ to denote the invertible, i.e., non-zero, elements of $\bF$ and $\F_0=\set{a\in \F}{a^\sigma=a}$ to be the elements fixed by the involution, 
which play the analogous role that $\R$ plays to $\C$, as $\C_0=\R$ with respect 
to complex conjugation. 
An element $a\in\F$ is a \textbf{square} or quadratic residue if there
exists some $x\in\F$ such that $x^\sigma x=a$
and we will denote the set of non-zero squares as $\F^{\times 2}$.
Notice that $\F^{\times 2}$ is a subgroup of the multiplicative elements fixed by the involution $\F_0^\times$ which is itself a subgroup of the multiplicative group $\F^\times$.

We give a brief overview on forms which generalize the inner product to vector spaces over finite fields, 
specifically on Hermitian scalar products, following the terminology 
from \cite{jacobson_bilinear_1953,WILSON20092642}. 
We will note that the previous papers on frames over finite fields 
\cite{greaves_frames_2021,greaves_frames_2021-1,bodmann_frame_2009} follow a slightly different naming 
convention. What we will call a Hermitian scalar product may also be referred to as 
a $\sigma$-sesquilinear form or a Hermitian form.

Throughout we assume that $V$ is a finite dimensional vector space with a \textbf{non-degenerate 
Hermitian scalar product}, which is a scalar product 
$\ip{-}{-}_V:V\times V\ra \F$, that satisfies for any $u,v\in V$
\begin{itemize}
\setlength\itemsep{0em}
\item[] (HS1) $\ip{u}{-}_V:V\ra \F$ is linear;
\item[] (HS2) $\ip{u}{v}_V=\ip{v}{u}_V^\sigma$; and
\item[] (ND)\; $u=0$ if and only if $\ip{u}{w}_V=0$ for all $w\in W$ (non-degeneracy). 
\end{itemize}

Often the underlying space $V$ will be clear from context, and so we will drop the labeling in 
the scalar product notation, and denote $\ip{-}{-}_V$ as $\ip{-}{-}$.

Notice that conditions (HS1) and (HS2) also imply additivity in the first term, 
and scalar multiples in the first term satisfy $\ip{ku}{v}=k^\sigma\ip{u}{v}$. 
These two conditions make
$\ip{-}{-}:V\times V\ra \F$ a Hermitian scalar product. If the field involution 
is the identity, then we say the Hermitian scalar product is \textbf{symmetric}. 

A subspace $W\leq V$ 
with the same Hermitian scalar product 
restricted to $W$ may not satisfy the condition of degeneracy (ND). In this case we define the \textbf{radical}
\[\rad W = \{x\in W| \ip{x,y}=0\text{ for all }y\in W\}\se W\]
which quantifies how degenerate the subspace is. Elements in the radical are called \textbf{isotropic}.
If for $u,v\in V$ we have that $\ip{u,v}=0$ we say that $u$ and $v$ are \textbf{orthogonal}, 
and we define the \textbf{orthogonal compliment} of a subspace $W$ with respect to the space $V$ 
as $W^\perp$ to be
\[W^\perp =\{v\in V| \ip{w,v}=0\text{ for all }w\in W\}\leq V.\]
This gives an equivalent definition for the radical as 
$\rad W=W^\perp\cap W$ and an equivalent condition on subspaces satisfying condition (ND).
We say a subspace $W$ is \textbf{isotropic} (or \textbf{degenerate}) 
if $\rad W=W^\perp\cap W\neq \{0\}$ and likewise
we say $W$ is \textbf{non-isotropic} (or \textbf{non-degenerate}) 
if $\rad W=W^\perp\cap W=\{0\}$, in 
which case $W$ satisfies condition (ND). A space is called \textbf{totally isotropic} if $W\leq W^\perp$.
If the scalar product for $V$ is non-degenerate then the orthogonal compliment 
is an involution, that is $W^{\perp\perp}=W$, 
but in general it would be the case that $W\se W^{\perp\perp}$. If $W$ is totally 
isotropic then $\dim W\leq \frac{1}{2} \dim V$.
Notice that if a subspace $W\leq V$ is non-isotropic then $W\cap W^\perp = \{0\}$ and $W \cup W^\perp= V$,
meaning $V= W\oplus W^\perp$.

Now assume $U,V$ are both vector spaces with non-degenerate Hermitian scalar products. 
For any linear map $A:U\ra V$ there is a unique \textbf{adjoint} which satisfies 
$\ip{Au}{v}_V=\ip{u}{A^\dagger v}_U$ for all $u\in U$ and $v\in V$. Because adjoints 
are uniquely defined, the adjoint of $A^\dagger$ is $A$, i.e.,
$(A^\dagger)^\dagger=A^{\dagger\dagger}=A$. 

For linear maps on spaces with degenerate Hermitian scalar products, 
adjoints may still be defined but are not unique and are usually defined as pairs of linear maps.
It is important to note that the adjoint is defined with respect to the non-degenerate
scalars products for $U$ and $V$ and are often not the same as the standard transpose 
or conjugate transpose which we will define separately as $A^*$, where for
$A=[a_{ij}]_{ij}\in \F^{n\times m}$ the \textbf{conjugate transpose} is $A^*=[a_{ji}^\sigma]_{ij}\in \F^{m\times n}$.

Because $V$ is finite dimensional we can give an equivalent definition of a
Hermitian scalar product and non-degeneracy in terms of the \textbf{Gram matrix}, 
$M=[\ip{e_i}{e_j}]_{ij}\in\F^{d\times d}$ where $e_1,\dots,e_d$ is a basis for $V$.
In this case, the map $\ip{-}{-}:V\times V\ra \F$, 
is a non-degenerate Hermitian scalar product for the vector space $V$ if there exists a matrix $M$ such that for
all $u,v\in V$
\begin{itemize}
\setlength\itemsep{0em}
\item[] (HS1$'$) $\ip{u}{v}=u^*Mv$;
\item[] (HS2$'$) $M=M^*$; and
\item[] (ND$'$)\; $M$ is invertible (non-degeneracy). 
\end{itemize}

This allows us to explicitly write the adjoints for linear transformations.
Let $U,V$ be vector spaces with non-degenerate Hermitian scalar products.
Choose bases $u_1,\dots, u_n$ and $v_1,\dots,v_m$ respectively, and let $M,N$ be the 
invertible matrices such that $\ip{u_1}{u_2}_U=u_1^*Mu_2$ and $\ip{v_1}{v_2}_U=v_1^*Nv_2$
Then for any map $A:U\ra V$ the unique adjoint can be written as $A^\dagger = M^{-1}A^*N$.
Two scalar products, $\ip{u_1}{u_2}_V=u_1^*Mu_2$ and $\ip{v_1}{v_2}_U=v_1^*Nv_2$ are said to be \textbf{equivalent} if there exists an invertible matrix $A:U\ra V$ such that 
$M=A^{-1}NA$.

Throughout this paper we will pay particular attention to the case where $\F$ is a finite field, in which case there are, in general, two cases for the fields. The first case is \textbf{Case U}, where $\F=\F_{q^2}$ with field involution the Frobenius involution $a^\sigma=a^q$, the unique non-trivial involution. 
In this case any vector space $V$ with a non-degenerate Hermitian scalar product is called a \textbf{unitary space}, and has \textbf{unitary geometry}. Furthermore, if $V=\F_{q^2}^d$ with the scalar product $\ip{u}{v}=u^*v$, then we say $V$ is in the \textbf{complex model}.
\begin{rem}
    In case U, where $\F=\F_{q^2}$ for a prime power $q$, it is the case that the non-zero elements fixed by the involution are exactly the non-zero squares with respect to the involution, $\F_0^\times=\F^{\times 2}$.
    To see this let $w\in\F_{q^2}^\times$ be a generator of the multiplicative group, and consider any non-zero element fixed by the involution $b\in \F_0$. This means that $b^q=b$ so $b^{q-1}=1$. Because $w$ generates $\F^\times$ there exists some $k$ such that $w^k=b$, therefore $(w^k)^{q-1}=w^{k(q-1)}=1$ so $q^2-1|k(q-1)$ and there exists a positive integer $\ell$ where $k(q-1)=\ell(q^2-1)$. This tells us that
    $k=\ell(q+1)$ and so $b=w^{\ell(q+1)}=(w^\ell)^q(w^\ell)\in \F_0^{\times}$.
\end{rem}

The second case, \textbf{Case O}, is where $\F=\F_q$, such that $q$ is an odd prime power with trivial field involution $a^\sigma=a$. In this case any vector space $V$ with a non-degenerate symmetric scalar product is called an \textbf{orthogonal space}, and has \textbf{orthogonal geometry}.
In this case $\F_0=\F$ and only half of the non-zero elements are squares. 
Furthermore if $V=\F_{q}^d$ with the scalar product $\ip{u}{v}=u^\intercal v$ then we say $V$ is in the \textbf{real model}

\begin{lem}
    \label{lem:scalarprods-diag}
    If $\ip{-}{-}$ is a non-degenerate Hermitian scalar product for $V$, then there exists a basis $v_1,\dots,v_d$ where $\ip{v_j}{v_j}=b_j$, for $b_j\in\F_0^\times$ and all other products between basis elements are zero.
    Furthermore, in Case U, it can be made so that $b_j=1$ for all $j$.
    And in case O, it can be made so that $b_j=1$ for all $j<d$ and $b_d=\delta\in \F_0^\times$ is either a square or a non-square.
\end{lem}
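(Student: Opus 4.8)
The plan is to prove the first assertion by induction on $d=\dim V$, building an orthogonal basis one vector at a time à la Gram--Schmidt, and then to rescale the basis vectors to normalize the diagonal entries $b_j=\ip{v_j}{v_j}$. The base case $d=1$ is immediate: (ND) forces $\ip{v}{v}\neq 0$ for any nonzero $v$, and (HS2) gives $\ip{v}{v}\in\F_0$. The essential ingredient in the inductive step is the existence of an \emph{anisotropic} vector, i.e.\ some $v\in V$ with $\ip{v}{v}\neq 0$. I would obtain this by contradiction: if $\ip{v}{v}=0$ for every $v$, then polarization gives $0=\ip{u+w}{u+w}=\ip{u}{w}+\ip{u}{w}^\sigma$ for all $u,w$; choosing $u,w$ with $c:=\ip{u}{w}\neq 0$ by (ND) and then replacing $u$ by $\lambda u$ yields $\lambda^\sigma c+\lambda c^\sigma=0$, which together with the case $\lambda=1$ (giving $c^\sigma=-c$) forces $(\lambda^\sigma-\lambda)c=0$, hence $\lambda=\lambda^\sigma$ for all $\lambda\in\F$, so $\sigma=\mathrm{id}$ and then $2c=0$ --- impossible in Case~U (where $\sigma\neq\mathrm{id}$) and in Case~O (where $\F$ has odd characteristic), and more generally whenever the characteristic is not $2$ or the involution is nontrivial. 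Given an anisotropic $v_1$, the line $W=\F v_1$ is non-isotropic, so $V=W\oplus W^\perp$, and the form restricts non-degenerately to $W^\perp$ (an element of $W^\perp$ orthogonal to all of $W^\perp$ is, via (HS2), orthogonal to all of $V$, hence $0$ by (ND)); the inductive hypothesis applied to $W^\perp$ yields an orthogonal basis $v_2,\dots,v_d$, and $v_1,\dots,v_d$ is then the desired orthogonal basis, with each $b_j\in\F_0^\times$ by (HS2) and anisotropy.

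For the normalization, observe that replacing $v_j$ by $\lambda v_j$ with $\lambda\in\F^\times$ replaces $b_j$ by $(\lambda^\sigma\lambda)\,b_j$, and $\{\lambda^\sigma\lambda:\lambda\in\F^\times\}=\F^{\times 2}$; thus each $b_j$ can be rescaled freely within its coset in $\F_0^\times/\F^{\times 2}$ and no further. In Case~U, the Remark above gives $\F_0^\times=\F^{\times 2}$, so every $b_j$ is a nonzero square; writing $b_j=\mu^\sigma\mu$ and rescaling $v_j$ by $\mu^{-1}$ makes all $b_j=1$, which settles Case~U.

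In Case~O we have $\F_0^\times=\F^\times$ with $\F^\times/\F^{\times 2}$ of order $2$, so after rescaling we may assume each $b_j\in\{1,\delta\}$ for a fixed non-square $\delta$; the remaining task, and the crux of the proof, is to reduce the number of indices with $b_j=\delta$ to at most one. I would establish the sub-claim that the two-dimensional orthogonal space with Gram matrix $\mathrm{diag}(\delta,\delta)$ also admits an orthogonal basis with Gram matrix $\mathrm{diag}(1,1)$. The key elementary fact is that over $\F_q$ with $q$ odd every field element is a sum of two squares: the sets $\{a^2:a\in\F_q\}$ and $\{t-b^2:b\in\F_q\}$ each have size $(q+1)/2$ and so intersect. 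Hence, fixing any non-square $t$ and writing $t=a^2+b^2$, the vector $v=a v_j+b v_k$ satisfies $\ip{v}{v}=\delta(a^2+b^2)=\delta t$, a nonzero square, so $v$ rescales to self-product $1$; the orthogonal complement of $v$ inside $\mathrm{span}(v_j,v_k)$ is spanned by an anisotropic vector whose self-product is again a nonzero square (since $\det\mathrm{diag}(\delta,\delta)=\delta^2$ is a square and the Gram determinant changes only by a square under change of basis, as $\sigma=\mathrm{id}$), and hence it too rescales to $1$. Applying this to a pair of indices with $b_j=b_k=\delta$ removes two $\delta$'s at a time; the process stops with zero or one such index, which we relabel to be the last, giving $b_d\in\{1,\delta\}$ --- a square or a non-square according to whether the discriminant $\prod_j b_j$ is a square.

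I expect the Case~O reduction --- specifically the sub-claim that a pair of non-square diagonal entries can be cleared --- to be the only nontrivial step; everything else is routine Gram--Schmidt together with bookkeeping in $\F_0^\times/\F^{\times 2}$. One should also note that the degenerate case $d=1$ in Case~O needs no reduction at all, $b_1=b_d$ being simply the sole square class carried by the form.
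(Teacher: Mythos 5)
The paper states this lemma without proof, treating it as a standard fact from the theory of Hermitian and symmetric forms (cf.\ the cited references \cite{jacobson_bilinear_1953,WILSON20092642}), so there is no internal proof to compare against; judged on its own, your argument is correct and complete. It is the standard route: produce an anisotropic vector by polarization, split off its span and induct to get an orthogonal basis, then do square-class bookkeeping in $\F_0^\times/\F^{\times 2}$, using $\F_0^\times=\F^{\times 2}$ in Case~U and the sum-of-two-squares pigeonhole argument to convert $\diag(\delta,\delta)$ into $\diag(1,1)$ in Case~O. The one caveat worth recording is that your existence argument for an anisotropic vector genuinely requires the characteristic to differ from $2$ or the involution to be nontrivial (as you note); the first sentence of the lemma is phrased for a general field with involution, where the statement fails for symplectic-type forms in characteristic $2$, but this is outside the paper's Cases U and O and does not affect anything downstream.
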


This tells us that we may always assume that the Gram matrix $M$ for a non-degenerate scalar product is diagonal with respect to some basis. The specific result for case O also suggests an important invariant of scalar products, the \textbf{discriminant}, which will be defined on a non-isotropic space $V$ as
\[\discr(V)=\det(M)\F^{\times 2}\in \F_0^\times/\F^{\times 2}\]
The discriminant is invariant under the choice of basis for $V$, but under a choice of basis for $V$ such as in \cref{lem:scalarprods-diag} then $\discr(V)=\delta\F^{\times 2}$. If $\discr (V)=\F^{\times 2}$, i.e., the determinant is a square, we say the discriminant is trivial. In case U, the discriminant is always trivial.

\begin{ex}
    Consider the following two matrices which can be interpreted as the Gram matrices of symmetric scalar products for $\F_3^2$, both of which have trivial discriminant and define equivalent scalar products
    \[M=\begin{bmatrix}
        1&0\\0&1
    \end{bmatrix}\;\;\;\;\;N=\begin{bmatrix}
        2&0\\0&2
    \end{bmatrix}\]
    where $2\equiv -1\pmod 3$ is not a square. 
\end{ex}
In general, two non-degenerate scalar products are equivalent if and only if they have the same discriminant.

We consider integers to be elements of $\F$. Formally, we consider an action $\cdot:\Z\times\F\ra\F$ where $n\cdot r = r+\dots +r$ added $n$-times, which defines a ring homomorphism $\Z\ra\F$ by $n\mapsto (n\cdot 1)$. More generally we can consider a surjective homomorphism from certain rings of algebraic integers $R\se \C$ to our field, $\pi:R\ra \F$ as defining an action on $\F$.
For example, let $\F_{25}=\F_5[x]/(x^2-3)=\F_5[\alpha]$, where $\alpha^2=3$. We can consider the ring $R=\Z[\sqrt{3}]$ and the map $\pi: R\ra \F_{25}$ by $\sqrt{3}\mapsto \alpha$. We often conflate algebraic integers, as elements of $\C$, with their images in $\F$. We will use $\equiv$ to represent equality in the image of $\pi$ if it not clear from context, or $\equiv_p$ if $\F=\F_{p^\ell}$ is a finite field. Often equality will be clear from context.
For $m\in R$ where $\pi(m)\neq 0$ we can consider the action of elements from the field of fractions of $R$ as
$\frac{n}{m}\mapsto (n\cdot 1)(m\cdot 1)^{-1}$. Finally, let $M$ be a matrix with entries in $R$, we will consider the $\overline{M}$ to be the pointwise image under $\pi$.

\subsection{Frames}
\label{sec:frames}
Here we continue with the traditional abuse of notation (e.g., in \cite{greaves_frames_2021,greaves_frames_2021-1,iverson_note_2021}):
identifying sequences of vectors $\Phi=(\phi_j)_{j=1}^n\se V$ with their \textbf{synthesis operator}s
$\Phi:\F^n\ra V$ defined as $\Phi(x)=\sum_{j=1}^n x_j\phi_j$. We will always assume 
any sequence of vectors lives in a finite dimensional vector space $V$ with a non-degenerate Hermitian 
scalar product, but it may be the case that the scalar product restricted to $\image\Phi\se V$ is degenerate.

The \textbf{analysis operator} $\Phi^\dagger:V\ra \F^n$ is the adjoint of the synthesis 
operator $\Phi$, where $\Phi^\dagger x=(\ip{\phi_j}{x})_j$. Its \textbf{frame operator} is defined to 
be $\Phi\Phi^\dagger$ where $x\mapsto \sum_{j=1}^n{\ip{\phi_j}{x}\phi_j}$
and its \textbf{Gram matrix} is $\Phi^\dagger\Phi=[\ip{\phi_{j}}{\phi_k}]_{jk}$.

We are interested in sequences of vectors which have additional structure. We call 
a sequence of vectors $\Phi$ a \textbf{frame} for $V$ if its vectors span $V$. We then call a frame 
$\Phi$ \textbf{non-degenerate} if its frame operator is invertible. 
If a frame $\Phi:\F^n\ra V$ satisfies $\Phi\Phi^\dagger = cI$ for some $c\in \F$,
we say $\Phi$ is a \textbf{$c$-tight frame}. In the special case when $c=0$ we call $\Phi$
a \textbf{totally isotropic tight frame}. Note that all frames in real or complex vector spaces are non-degenerate and none are totally isotropic.
Geometrically, tightness is a generalization of the Pythagorean theorem, in the sense that when
$\Phi\Phi^\dagger=cI$ for any $x\in V$, right multiplication by the frame operator gives
$cx=\sum_{j=1}^n{\ip{\phi_j}{x}\phi_j}$ and so 
$c\ip{x,x}=\sum_{j=1}^n\ip{\phi_j}{x}\ip{x}{\phi_j}$. This is exactly the Pythagorean theorem 
when the vectors of $\Phi$ are unit norm and an orthogonal basis, in which case $c=1$.
The condition for a frame to be tight can be expressed in many different ways, 
a few of which we highlight below.  
\begin{lem}
    \label{lem:tfaetight1} (Proposition 3.5 \cite{greaves_frames_2021})
    Let $\Phi:\F^n\ra V$ be a frame, then the following are equivalent:
    \begin{itemize}
        \setlength\itemsep{0em}
        \item[(i)] $\Phi$ is a $c$-tight frame meaning $\Phi\Phi^\dagger=cI$;
        \item[(ii)] $\Phi\Phi^\dagger\Phi=c\Phi$; and
        \item[(iii)] $(\Phi^\dagger\Phi)^2=c\Phi^\dagger\Phi$.
    \end{itemize}
\end{lem}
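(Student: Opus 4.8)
The plan is to prove the three statements equivalent cyclically, $(i)\Rightarrow(ii)\Rightarrow(iii)\Rightarrow(i)$, with the first two implications being purely formal and all of the content sitting in $(iii)\Rightarrow(i)$. For $(i)\Rightarrow(ii)$ I would right-multiply $\Phi\Phi^\dagger=cI$ by $\Phi$ to get $\Phi\Phi^\dagger\Phi=c\Phi$. For $(ii)\Rightarrow(iii)$ I would left-multiply $\Phi\Phi^\dagger\Phi=c\Phi$ by $\Phi^\dagger$, which gives $(\Phi^\dagger\Phi)^2=\Phi^\dagger\Phi\Phi^\dagger\Phi=c\Phi^\dagger\Phi$.

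For $(iii)\Rightarrow(i)$ I would first observe that, as operators $\F^n\to\F^n$, one has $(\Phi^\dagger\Phi)^2-c\Phi^\dagger\Phi=\Phi^\dagger(\Phi\Phi^\dagger-cI)\Phi$, so $(iii)$ is exactly the statement $\Phi^\dagger(\Phi\Phi^\dagger-cI)\Phi=0$. Since $\Phi$ is a frame, its vectors span $V$, so $\Phi\colon\F^n\to V$ is surjective; applying the identity to an arbitrary $y\in\F^n$ and noting that $\Phi y$ runs over all of $V$ yields $\Phi^\dagger(\Phi\Phi^\dagger-cI)=0$ as an operator $V\to\F^n$. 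It then suffices to cancel the leftmost $\Phi^\dagger$, i.e.\ to show that $\Phi^\dagger$ is injective. For this I would compute its kernel from the defining property of the adjoint: $x\in\ker\Phi^\dagger$ if and only if $\ip{y}{\Phi^\dagger x}=0$ for all $y\in\F^n$ (by non-degeneracy of the scalar product on $\F^n$), if and only if $\ip{\Phi y}{x}=0$ for all $y$, if and only if $x\in(\image\Phi)^\perp$. Because $\image\Phi=V$, this gives $\ker\Phi^\dagger=V^\perp=\rad V=\{0\}$ using non-degeneracy of $\ip{-}{-}_V$. Hence $\Phi^\dagger(\Phi\Phi^\dagger-cI)=0$ forces $\image(\Phi\Phi^\dagger-cI)\subseteq\ker\Phi^\dagger=\{0\}$, that is $\Phi\Phi^\dagger=cI$, which is $(i)$.

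The only step I expect to require genuine care is the injectivity of $\Phi^\dagger$: this is the single place where the frame hypothesis (spanning) and the non-degeneracy of the ambient scalar product are actually used. It is worth isolating because over finite fields the scalar product restricted to a proper subspace such as $\image\Phi$ may be degenerate even when the ambient one is not — but that pathology cannot occur here precisely because $\image\Phi$ is all of $V$. Everything else is formal manipulation of operators and adjoints.
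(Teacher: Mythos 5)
Your proof is correct. The paper does not actually prove this lemma itself---it quotes it as Proposition 3.5 of \cite{greaves_frames_2021}---but your cyclic argument is the standard route: (i)$\Rightarrow$(ii)$\Rightarrow$(iii) by multiplying on the right and left by $\Phi$ and $\Phi^\dagger$, and for (iii)$\Rightarrow$(i) the factorization $(\Phi^\dagger\Phi)^2-c\Phi^\dagger\Phi=\Phi^\dagger(\Phi\Phi^\dagger-cI)\Phi$ combined with surjectivity of $\Phi$ (the frame hypothesis) and the identification $\ker\Phi^\dagger=(\image\Phi)^\perp=V^\perp=\{0\}$ from non-degeneracy of the scalar product on $V$, which is precisely the point where degeneracy of proper subspaces could otherwise cause trouble, as you correctly isolate.
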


Often in this paper we will consider collections of vectors $\Phi=(\phi_j)_{j=1}^n\se V$ 
that do not span the non-isotropic space $V$ in which they live. In these cases we can still consider them to be \textbf{frames for their spans}, if $\image\Phi\se V$ is a non-isotropic subspace, meaning the Hermitian form 
for $V$ is non-degenerate when restricted to $\image\Phi$. The following lemmas can help 
characterize this situation.

\begin{lem}(Proposition 3.11 \cite{greaves_frames_2021})
    \label{lem:kerwhenframe}
    If $\Phi$ is a frame then $\ker\Phi^\dagger \Phi = \ker\Phi$ and 
    $\image\Phi^\dagger\Phi=\image\Phi^\dagger$
\end{lem}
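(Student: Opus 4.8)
The plan is to establish the two equalities separately, handling the kernel identity first since the image identity then follows quickly from the surjectivity of $\Phi$ (recall that, $\Phi$ being a frame, $\image\Phi=V$).

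For $\ker\Phi^\dagger\Phi=\ker\Phi$: the containment $\ker\Phi\subseteq\ker\Phi^\dagger\Phi$ is immediate. For the reverse, suppose $\Phi^\dagger\Phi x=0$. I would pair this with an arbitrary $y\in\F^n$ and invoke the defining property of the adjoint (with respect to the fixed non-degenerate scalar products on $\F^n$ and $V$) to get $\ip{\Phi y}{\Phi x}_V=\ip{y}{\Phi^\dagger\Phi x}_{\F^n}=0$; here it does not matter which argument of the Hermitian scalar product one declares linear since the right-hand side is $0$. As $\Phi$ is a frame, $y\mapsto\Phi y$ ranges over all of $V$, so this says $\ip{w}{\Phi x}_V=0$ for every $w\in V$, and non-degeneracy of the scalar product on $V$ (condition (ND)) then forces $\Phi x=0$, i.e.\ $x\in\ker\Phi$.

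For $\image\Phi^\dagger\Phi=\image\Phi^\dagger$: the containment $\image\Phi^\dagger\Phi\subseteq\image\Phi^\dagger$ is immediate. Conversely, given $v\in V$, the frame hypothesis yields some $x\in\F^n$ with $\Phi x=v$, whence $\Phi^\dagger v=\Phi^\dagger\Phi x\in\image\Phi^\dagger\Phi$; thus $\image\Phi^\dagger=\Phi^\dagger(\image\Phi)\subseteq\image\Phi^\dagger\Phi$ and equality holds. (Alternatively one can finish by dimension count: the kernel identity gives $\dim\image\Phi^\dagger\Phi=n-\dim\ker\Phi=\dim\image\Phi=\dim V$, while $\ker\Phi^\dagger=(\image\Phi)^\perp=V^\perp=\{0\}$ gives $\dim\image\Phi^\dagger=\dim V$, so a containment of equal-dimensional subspaces is an equality.)

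There is no real obstacle here; the only thing to be careful about is the bookkeeping with the Hermitian (rather than bilinear) convention and the reminder that $\Phi^\dagger$ is the adjoint relative to the ambient non-degenerate scalar products, not a transpose. Once that is in place, both statements reduce to the surjectivity of $\Phi$ together with non-degeneracy of the scalar product on $V$.
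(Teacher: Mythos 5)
Your proof is correct. The paper does not actually supply an argument for this lemma --- it is imported verbatim as Proposition 3.11 of the cited reference --- and your reasoning (surjectivity of $\Phi$ plus non-degeneracy of the scalar product on $V$ for the kernel identity, then surjectivity again, or a dimension count, for the image identity) is precisely the standard argument that establishes it, with the adjoint convention $\ip{\Phi y}{v}_V=\ip{y}{\Phi^\dagger v}_{\F^n}$ handled correctly.
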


\begin{lem}\label{lem:nondegiff}
    Let $\Phi:\F^n\ra V$ be the synthesis operator for a collection of vectors. 
    Then $\rk(\Phi^\dagger \Phi)=\rk(\Phi)$  
    if and only if $\image\Phi$ is non-isotropic, i.e., $\Phi:\F^n\ra \image \Phi$ is a frame.
\end{lem}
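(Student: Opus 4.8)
The plan is to compute $\rk(\Phi^\dagger\Phi)$ exactly, expressing it in terms of $\rk(\Phi)$ and the dimension of the radical of $\image\Phi$; the stated equivalence then falls out immediately. Write $W=\image\Phi\le V$, so that $\Phi$ restricts to a surjection $\F^n\ra W$ and $\rk(\Phi)=\dim W$.

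First I would identify $\ker\Phi^\dagger$. Since $\Phi^\dagger v=(\ip{\phi_j}{v})_j$ and the $\phi_j$ span $W$, we have $\Phi^\dagger v=0$ if and only if $\ip{w}{v}=0$ for every $w\in W$; that is, $\ker\Phi^\dagger=W^\perp$. (This uses only that the $\phi_j$ span $W$, not that $W$ equals $V$ or that the form on $W$ is non-degenerate.) Consequently
\[\ker(\Phi^\dagger\Phi)=\{x\in\F^n:\Phi x\in W^\perp\}=\Phi^{-1}(W\cap W^\perp)=\Phi^{-1}(\rad W).\]

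Next, because $\Phi$ restricts to a surjective linear map $\F^n\ra W$ and $\rad W\le W$, the standard dimension count for preimages of subspaces under a surjection (if $T\colon A\ra B$ is linear and onto and $S\le B$, then $\dim T^{-1}(S)=\dim\ker T+\dim S$) gives $\dim\Phi^{-1}(\rad W)=\dim\ker\Phi+\dim\rad W$. Combining this with rank--nullity, $\dim\ker\Phi=n-\rk(\Phi)$, yields
\[\rk(\Phi^\dagger\Phi)=n-\dim\ker(\Phi^\dagger\Phi)=\rk(\Phi)-\dim\rad W.\]
Hence $\rk(\Phi^\dagger\Phi)=\rk(\Phi)$ if and only if $\dim\rad W=0$, i.e., if and only if $\rad(\image\Phi)=\{0\}$, which is precisely the statement that $\image\Phi$ is non-isotropic, equivalently that $\Phi\colon\F^n\ra\image\Phi$ is a frame.

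I do not anticipate a real obstacle here: the argument is elementary linear algebra valid over any field. The only steps requiring a little care are the identification $\ker\Phi^\dagger=(\image\Phi)^\perp$ and the preimage-dimension formula, both routine. One could alternatively run the argument through Gram matrices using \cref{lem:scalarprods-diag}, diagonalizing the form on $V$ and tracking the zero entries corresponding to $\rad(\image\Phi)$, but the coordinate-free version above is shorter and avoids choosing bases.
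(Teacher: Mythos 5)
Your proof is correct, and it takes a slightly different route from the paper's. The paper argues the two directions separately: the backward direction is delegated to \cref{lem:kerwhenframe}, and the forward direction observes that equal ranks force $\ker(\Phi^\dagger\Phi)=\ker(\Phi)$, whence any $u=\Phi x\in\rad\image\Phi$ satisfies $\Phi^\dagger\Phi x=0$ and so $u=0$. You instead prove the sharper quantitative identity $\rk(\Phi^\dagger\Phi)=\rk(\Phi)-\dim\rad(\image\Phi)$, obtained from $\ker\Phi^\dagger=(\image\Phi)^\perp$, the consequent identification $\ker(\Phi^\dagger\Phi)=\Phi^{-1}(\rad\image\Phi)$, and the preimage dimension formula for the surjection $\Phi\colon\F^n\ra\image\Phi$. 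The germ of the argument is the same --- in both cases the radical is exactly what $\ker(\Phi^\dagger\Phi)$ sees beyond $\ker\Phi$ --- but your version delivers both implications at once without invoking \cref{lem:kerwhenframe}, and the explicit formula for the rank deficit is genuinely more informative (it quantifies, for instance, how far $\Psi$ fails to be a frame for its span in \cref{ex:naiweird}). All the individual steps check out: $\ker\Phi^\dagger=(\image\Phi)^\perp$ follows from the $\phi_j$ spanning $\image\Phi$ together with additivity and $\sigma$-semilinearity in the first slot of the scalar product, and the preimage-dimension count is standard rank--nullity applied to the restriction of $\Phi$ to $\Phi^{-1}(\rad\image\Phi)$.
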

\begin{proof}
    The backwards direction follows from \cref{lem:kerwhenframe}.
    
    For the other direction, assume that $\rk(\Phi^\dagger \Phi)=\rk(\Phi)=\dim(\image\Phi)$. This then means $\ker(\Phi^\dagger \Phi)=\ker(\Phi)$.
    We will show that $\rad \image(\Phi) = \{u\in \image(\Phi)| \ip{u}{v}=0\text{ for all }v\in \image(\Phi)\} = \{0\}$

    Let $u\in \rad \image(\Phi)\se \image(\Phi)$. We can then write $u=\Phi x$ in which case 
    $\Phi^\dagger \Phi x = 0$, and then since $\ker(\Phi^\dagger \Phi)=\ker(\Phi)$, $x\in \ker\Phi$, i.e., $u=0$.
\end{proof}

\begin{lem}(Corollary 3.9 of \cite{greaves_frames_2021})
    \label{lem:totally_iso_ngeq2d}
    $\Phi$ is a totally isotropic tight frame if and only if $\im\Phi^\dagger$ is a totally isotropic subspace of $\F^n$. Furthermore if $V$ is a $d$-dimensional non-isotropic space then $V$ admits a $0$-tight frame 
    with $n$ vectors only if $n\geq 2d$.
\end{lem}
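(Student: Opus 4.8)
The plan is to reduce the equivalence to one identity relating the frame operator to the scalar product on $\F^n$, and then, for the ``furthermore'' part, to pin down $\dim\im\Phi^\dagger$ from above (via totally isotropy) and from below (via the frame property).

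First I would record that for all $x,y\in V$,
\[
\ip{\Phi\Phi^\dagger x}{y}_V=\ip{\Phi^\dagger x}{\Phi^\dagger y}_{\F^n},
\]
which is just the defining relation $\ip{\Phi u}{v}_V=\ip{u}{\Phi^\dagger v}_{\F^n}$ applied with $u=\Phi^\dagger x$ and $v=y$. Since the scalar product on $V$ is non-degenerate, $\Phi\Phi^\dagger=0$ holds precisely when the left-hand side vanishes for all $x,y$, and by the identity this is precisely the statement that $\Phi^\dagger x\perp\Phi^\dagger y$ for all $x,y$, i.e.\ that $\im\Phi^\dagger\leq\F^n$ is totally isotropic. (Here $\Phi$ is assumed to be a frame, so that ``totally isotropic tight frame'' just means ``$0$-tight''; one could instead deduce the vanishing from condition (iii) of \cref{lem:tfaetight1}, but the direct computation is shorter.)

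For the furthermore claim, let $V$ be $d$-dimensional and non-isotropic and let $\Phi\colon\F^n\to V$ be a $0$-tight frame. By the equivalence just proved, $\im\Phi^\dagger$ is a totally isotropic subspace of $\F^n$; since $\F^n$ carries the (non-degenerate) standard scalar product, the general bound that a totally isotropic subspace has dimension at most half the ambient dimension (noted in \cref{sec:arbitraryfields}) gives $\dim\im\Phi^\dagger\leq n/2$. On the other hand, since $\Phi$ is a frame, \cref{lem:kerwhenframe} yields $\im\Phi^\dagger\Phi=\im\Phi^\dagger$ and $\ker\Phi^\dagger\Phi=\ker\Phi$, so by rank--nullity
\[
\dim\im\Phi^\dagger=\rk(\Phi^\dagger\Phi)=n-\dim\ker\Phi=\rk\Phi=\dim V=d.
\]
Combining the two estimates gives $d\leq n/2$, that is, $n\geq 2d$.

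Every step is short; the only points needing care are the bookkeeping of arguments in the adjoint identity and remembering that the half-dimension bound is applied to $\F^n$ with its non-degenerate form, not to $\im\Phi^\dagger$ sitting inside the (possibly differently equipped) space $V$. I do not expect a genuine obstacle here.
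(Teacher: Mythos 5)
Your argument is correct. Note that the paper states this lemma without proof, importing it as Corollary~3.9 of \cite{greaves_frames_2021}, so there is no in-paper proof to compare against; your direct computation $\ip{\Phi\Phi^\dagger x}{y}_V=\ip{\Phi^\dagger x}{\Phi^\dagger y}_{\F^n}$ combined with non-degeneracy of the form on $V$ cleanly gives the equivalence, and the dimension count $\dim\im\Phi^\dagger=d\leq n/2$ (via \cref{lem:kerwhenframe}, or even more directly from $\ker\Phi^\dagger=\{0\}$ since the $\phi_j$ span $V$ and the form on $V$ is non-degenerate) correctly yields $n\geq 2d$. Your two flagged caveats --- that the backward implication needs the standing assumption that $\Phi$ is a frame, and that the half-dimension bound is applied in $\F^n$ with its non-degenerate standard form --- are exactly the right points to be careful about.
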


\subsection{Equiangular Lines and ETFs}
\label{ssec:lines}
Frames are often used as algebraic tools for studying systems of lines, that is
packings in projective space.
In the real and complex setting it is often advantageous to represent 
a line through the origin by a vector, 
specifically by a unit vector. But this is often not possible 
in finite fields, due to the lack of square roots, making it difficult to scale vectors.
Instead we will often work with the generalization of equal-norm vectors (allowing zero for the ``norm'') where we refer to the \textbf{magnitude} or \textbf{norm} of a vector $\phi$ as $\ip{\phi}{\phi}$. 
But even this is not
always possible for any given system of lines.

In general for a system of lines $\Phi=(\phi_j)_{j=1}^n\in V$, if $\frac{\ip{\phi_1}{\phi_1}}{\ip{\phi_j}{\phi_j}}$ is a square 
for every vector $\phi_j\in \Phi$ (i.e. there exists some non-zero $\alpha_j$ such that 
$\frac{\ip{\phi_1}{\phi_1}}{\ip{\phi_j}{\phi_j}}=\alpha_j\alpha_j^\sigma$) then we 
could rescale each vector to get a equal norm system of vectors $\{\alpha_j\phi_j\}_{j=1}^n$ who all share 
the common magnitude equal to the magnitude of $\phi_1$.
Furthermore to rescale an equal norm system of vectors $\Phi=(\phi_j)\in V$ 
with vectors of magnitude $a\neq 0$ to vectors of unit norm, it would need to be 
the case that $a=\alpha\alpha^\sigma$ for some non-zero $\alpha$, in which case
$\left\{\frac{1}{\alpha}\phi_j\right\}_{j=1}^n$ would be a unit norm system of vectors.
We note, however, that $a$ may often be $0$, and so scaling $a$ is frequently not possible.

To study systems of lines, specifically the ``distances'' between lines induced by the scalar products, we 
can limit ourselves to looking at equal norm systems of lines which satisfy a condition analogous to that 
of the modulus squared of the inner product of two unit vectors.

\begin{defn}
    \label{def:equisys}
    Given $a,b\in \F_0$ we say $\Phi=(\phi_j)_{j=1}^n\se V$ is an 
    \textbf{$(a,b)$-equiangular system} in $V$ if the following two conditions hold.
    \begin{itemize}
        \setlength\itemsep{0em}
        \item[(i)] $\ip{\phi_j}{\phi_j}=a$ for all $j$
        \item[(ii)] $\ip{\phi_j}{\phi_k}\ip{\phi_k}{\phi_j}=b$ for all $j\neq k$
    \end{itemize}
\end{defn}

Instead of scaling by $a$ it is often the case that $b\neq 0$; so, we may 
want to scale a system of lines such that $b=1$. Let $\Phi=(\phi_j)_{j=1}^{n}$ 
be an $(a,b)$-equiangular system of lines. If there exists a non-zero $\alpha\in \F$ such that
$b=\frac{1}{(\alpha\alpha^\sigma)^2}$ then $\{\alpha\phi_j\}_{j=1}^{n}$ would be an 
$(\alpha\alpha^\sigma a, 1)$-equiangular system of lines.

\begin{defn}
    Let $\Phi=(\phi_j)_{j=1}^n$ be a collection of vectors in a $d$-dimensional non-isotropic space $V$.
    We call $\Phi$ an \textbf{$(a,b,c)$-equiangular tight frame}, or an \textbf{$(a,b,c)$-ETF} for $V$ if (i) $\Phi$ is a 
    $(a,b)$-equiangular system of vectors and (ii) $\Phi$ is an $c$-tight frame for $V$.
\end{defn}

\section{Switching Equivalence}\label{sec:switching}
In studying frames it is helpful to create a notion of equivalence. More generally 
we will introduce a notion of equivalence for systems of lines, inspired by the switching equivalence of graphs~\cite{VaSei66}. 

\begin{defn}
    Let $\Phi =( \phi_j)_{j=1}^n, \Psi = ( \psi_j)_{j=1}^n  \se V$ where $V$ is a non-isotropic 
    $d$-dimensional space.
    We say that $\Phi$ and $\Psi$ are \textbf{unitarily equivalent} if there exists a 
    unitary $U:V\ra V$ such that  $\Psi = U \Phi$. 
    
    More generally, we say that $\Phi$ and $\Psi$ are \textbf{switching equivalent} if 
    there exists a unitary $U:V\ra V$ 
    and diagonal matrix $T=\diag(t_1,\dots, t_n)\in \F^{n \times n}$ with entries satisfying 
    $t_{i} t_{i}^\sigma=1$ such that  $\Psi = U \Phi T$.
\end{defn}
The notion of 
switching equivalence may also be referred to as
projective unitary equivalence~\cite{chien_characterization_2016}.
Both unitary equivalence and switching equivalence are equivalence relations which---as we 
will see in this section---preserve much of the underlying information of systems of lines.

The following result is related to Proposition 3.12 of~\cite{greaves_frames_2021}, which concerned a sort of scaled unitary equivalence of frames; here, we consider unitary equivalence of any collections of vectors, including frames for their spans.
\begin{lem}
    \label{lem:gramunitiff}
        Let $\Phi =(\phi_j)_{j=1}^n, \Psi = (\psi_j)_{j=1}^n  \se V$  where $V$ is a $d$-dimensional non-isotropic space. 
        $\Phi$ and $\Psi$ are unitarily equivalent if and only if $\Psi^\dagger\Psi=\Phi^\dagger\Phi$
        and $\ker(\Phi)=\ker(\Psi)$. 
\end{lem}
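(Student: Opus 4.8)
The plan is to prove both directions, with the backward direction requiring the real work. The forward direction is immediate: if $\Psi = U\Phi$ for a unitary $U$, then $\ker(\Psi) = \ker(U\Phi) = \ker(\Phi)$ since $U$ is invertible, and $\Psi^\dagger\Psi = \Phi^\dagger U^\dagger U \Phi = \Phi^\dagger\Phi$ because $U^\dagger U = I$ by unitarity (recall the adjoint $\dagger$ is taken with respect to the scalar product on $V$). So I would dispatch this in one or two sentences.

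For the backward direction, suppose $\Psi^\dagger\Psi = \Phi^\dagger\Phi$ and $\ker(\Phi) = \ker(\Psi) =: K$. The idea is to build $U$ on the subspace $W := \image\Phi$ first and then extend it to all of $V$. Since $\ker\Phi = \ker\Psi$, the maps $\Phi$ and $\Psi$ both factor through $\F^n/K$, and I would define a linear map $U_0 : W \to \image\Psi$ by sending $\Phi x \mapsto \Psi x$; this is well-defined precisely because $\ker\Phi = \ker\Psi$, and it is surjective by construction. Next I would check $U_0$ preserves the scalar product on $W$: for $u = \Phi x$ and $v = \Phi y$ in $W$, we have $\ip{U_0 u}{U_0 v} = \ip{\Psi x}{\Psi y} = x^* (\Psi^\dagger\Psi) y = x^* (\Phi^\dagger\Phi) y = \ip{\Phi x}{\Phi y} = \ip{u}{v}$, using (HS1$'$)-style manipulation and the hypothesis $\Psi^\dagger\Psi = \Phi^\dagger\Phi$. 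In particular $U_0$ is injective on $W$ (an isometry with trivial radical argument, or directly: if $U_0 u = 0$ then $\ip{u}{v} = 0$ for all $v \in W$, and I'd want $W$ non-isotropic to conclude $u = 0$). This is the step I expect to be the main obstacle, because a priori $W = \image\Phi$ need not be non-isotropic, and then $U_0$ can fail to be injective and there is genuinely no unitary equivalence; I suspect the intended reading is that the statement is applied in the setting where $\image\Phi$ is a frame for its (non-isotropic) span — cf.\ \cref{lem:nondegiff} — or that one also implicitly assumes $\image\Phi = \image\Psi$ as non-isotropic subspaces. I would handle this by invoking \cref{lem:nondegiff}: the hypothesis $\Phi^\dagger\Phi = \Psi^\dagger\Psi$ forces $\rk(\Phi^\dagger\Phi) = \rk(\Phi)$ and $= \rk(\Psi)$, and combined with $\ker\Phi = \ker\Psi$ one gets that $\image\Phi$ is non-isotropic exactly when $\image\Psi$ is; in the non-isotropic case $U_0$ is then a bijective isometry $W \to \image\Psi$, and a dimension count gives $\image\Psi = W$ as well, so $U_0$ is a unitary automorphism of $W$.

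Finally I would extend $U_0$ to a unitary on all of $V$. Since $W$ is non-isotropic, $V = W \oplus W^\perp$ with $W^\perp$ also non-isotropic (stated in \cref{sec:arbitraryfields}). By \cref{lem:scalarprods-diag} applied to $W^\perp$ — or by Witt-type extension — the identity on $W^\perp$ is a unitary of $W^\perp$, so $U := U_0 \oplus \mathrm{id}_{W^\perp} : V \to V$ is unitary and satisfies $U\Phi = \Psi$ since every $\Phi x$ lies in $W$ where $U$ agrees with $U_0$. This completes the construction. I would remark that the only subtlety is the non-isotropy of $\image\Phi$, which is why the lemma is phrased for frames for their spans, and that the degenerate case is genuinely different (one of the paper's recurring themes, cf.\ the promised counterexamples).
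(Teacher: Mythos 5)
Your forward direction and the construction of the map $U_0:\image\Phi\to\image\Psi$, $\Phi x\mapsto \Psi x$, match the paper's proof: well-definedness comes from $\ker\Phi=\ker\Psi$, and $\ip{U_0\Phi x}{U_0\Phi y}=\ip{\Psi x}{\Psi y}=\ip{\Phi x}{\Phi y}$ comes from equality of the Gram matrices. But the way you finish the backward direction contains two genuine errors. First, injectivity of $U_0$ is not an obstacle and does not require $\image\Phi$ to be non-isotropic: if $U_0(\Phi x)=\Psi x=0$ then $x\in\ker\Psi=\ker\Phi$, so $\Phi x=0$; this is exactly how the paper constructs the inverse $A^{-1}(\Psi x)=\Phi x$. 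More seriously, your claimed resolution --- that $\Phi^\dagger\Phi=\Psi^\dagger\Psi$ ``forces'' $\rk(\Phi^\dagger\Phi)=\rk(\Phi)$ --- is false: equality of the two Gram matrices says nothing about whether either Gram matrix has the same rank as its synthesis operator, and \cref{ex:ranknotiff} exhibits a $\Phi$ with $\rk(\Phi)\neq\rk(\Phi^\dagger\Phi)$. The lemma carries no non-isotropy hypothesis on $\image\Phi$, so your argument leaves the degenerate case unhandled.

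Second, the extension step fails even when $\image\Phi$ is non-isotropic. From $\dim\image\Phi=\dim\image\Psi$ you conclude $\image\Psi=\image\Phi$, but equal dimensions do not give equal subspaces: take $\Phi=(e_1)$ and $\Psi=(e_2)$ in $\F^2$ with the standard product --- same $1\times 1$ Gram matrix, both kernels trivial, different images. For this pair your $U:=U_0\oplus\mathrm{id}_{W^\perp}$ sends both $e_1$ and $e_2$ to $e_2$, so it is not even invertible; in general the cross terms $\ip{U_0 w}{w'}$ with $w'\in W^\perp$ need not vanish, so $U$ is not an isometry. The paper resolves both issues in one stroke by applying the full Witt extension theorem: the bijective isometry $A:\image\Phi\to\image\Psi$ between two possibly distinct and possibly degenerate subspaces of the non-isotropic space $V$ extends to a unitary of all of $V$. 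If one insists on avoiding the degenerate-subspace form of Witt's theorem, an extra hypothesis such as $\rk(\Phi)=\rk(\Psi)=\rk(\Phi^\dagger\Phi)$ is needed, which is precisely the content of \cref{lem:gramunit-switch}.
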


\begin{proof}
        For the forward direction assume that $\Phi$ and $\Psi$ are unitarily equivalent. 
        In which case there exists an isometry $U$ such that $\Psi=U\Phi$ and so 
        $\Psi^\dagger\Psi=\Phi^\dagger U^\dagger U\Phi=\Phi^\dagger\Phi$. 
        And because $U$ is an isometry $\ker(\Phi)=\ker(\Psi)$.

        For the other direction assume that $\Psi^\dagger\Psi=\Phi^\dagger\Phi$ and 
        $\ker(\Phi)=\ker(\Psi)$. We will construct an invertible linear map 
        $A:\im\Phi\rightarrow\im\Psi$ such that $\Psi x=A(\Phi x)$ for any $x\in\F^n$. First 
        we will show that this map is well defined on $\im\Phi$. Let $\Phi x=\Phi y$, meaning 
        $x-y\in \ker(\Phi)$, and notice that because $\ker\Phi=\ker\Psi$ we have that 
        $x-y\in \ker(\Psi)$ and so $A\Phi x=A\Phi y$. So $A$ is a well-defined linear map on the 
        images. $A^{-1}:\image \Psi\ra \image\Phi$ can be constructed in a similar fashion 
        $A^{-1}(\Psi x)=\Phi x$.
        Notice also that because $\Psi^\dagger\Psi=\Phi^\dagger\Phi$, the isomorphism $A$ 
        satisfies $\ip{A\Phi x}{A\Phi y}=\ip{\Psi x}{\Psi y}=\ip{\Phi x}{\Phi y}$ and so 
        $\image\Phi$ and $\image\Psi$ are isometrically equivalent. By Witt's Extension 
        theorem (see, e.g., \cite{jacobson_bilinear_1953}), we know that $A$ extends to an unitary 
        transformation $U:V\ra V$ such that $\Psi=U\Phi$.
\end{proof}

In the case where $\Phi$ and $\Psi$ are frames then $\Psi^\dagger\Psi=\Phi^\dagger\Phi$ implies $\ker(\Phi)=\ker(\Psi)$
in which case $U:V\rightarrow V$ is uniquely determined on all of $V$.

Finally, we note that from \cref{lem:nondegiff} the condition of
$\rk(\Phi)=\rk(\Psi)=\rk(\Phi^\dagger\Phi)$ and 
$\Psi^\dagger\Psi=\Phi^\dagger\Phi$ implies unitary equivalence.

All of the above results for unitary equivalence may be reformulated for switching equivalence.
\begin{lem} \label{lem:gramswitch}
    Let $\Phi =( \phi_j)_{j=1}^n, \Psi = ( \psi_j)_{j=1}^n  \se V$ where $V$ is a $d$-dimensional non-isotropic space. 
    $\Phi$ and $\Psi$ are switching equivalent if and only if 
    $\Psi^\dagger\Psi=T^\dagger\Phi^\dagger\Phi T$ where $T=\diag(t_1,\dots, t_n)$ 
    is a diagonal matrix with $t_it_i^\sigma = 1$ and $\ker(\Phi T)=\ker(\Psi)$.
\end{lem}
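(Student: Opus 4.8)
The plan is to deduce this directly from \cref{lem:gramunitiff} by absorbing the diagonal matrix into the synthesis operator. The key observation is that $\Phi$ and $\Psi$ are switching equivalent precisely when there is a diagonal $T=\diag(t_1,\dots,t_n)$ with $t_it_i^\sigma=1$ such that the collection $\Phi T=(t_j\phi_j)_{j=1}^n\se V$ and $\Psi$ are \emph{unitarily} equivalent. So, for each fixed such $T$, I would apply \cref{lem:gramunitiff} to the pair $(\Phi T,\Psi)$ (which is legitimate since the columns of $\Phi T$ are the vectors $t_j\phi_j\in V$): this gives that $\Phi T$ and $\Psi$ are unitarily equivalent if and only if $\Psi^\dagger\Psi=(\Phi T)^\dagger(\Phi T)$ and $\ker(\Phi T)=\ker(\Psi)$.

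It then remains to identify $(\Phi T)^\dagger(\Phi T)$ with $T^\dagger\Phi^\dagger\Phi T$. Viewing $\Phi T$ as the composite of $T\colon\F^n\ra\F^n$ followed by $\Phi\colon\F^n\ra V$, both maps between spaces with non-degenerate scalar products, uniqueness of adjoints forces the adjoint of a composite to reverse order, so $(\Phi T)^\dagger=T^\dagger\Phi^\dagger$, where $T^\dagger$ is computed with respect to the standard scalar product $\ip{x}{y}=x^*y$ on $\F^n$. Since $T$ is diagonal, $T^\dagger=T^*=\diag(t_1^\sigma,\dots,t_n^\sigma)$, and the condition $t_it_i^\sigma=1$ says exactly that $T$ is unitary (so $T^\dagger=T^{-1}$), and in particular invertible. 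Substituting yields $(\Phi T)^\dagger(\Phi T)=T^\dagger\Phi^\dagger\Phi T$, which is precisely the Gram condition in the statement.

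Putting these together: for the forward direction, switching equivalence yields a unitary $U$ and a valid $T$ with $\Psi=U\Phi T=U(\Phi T)$, so $\Phi T$ and $\Psi$ are unitarily equivalent, and \cref{lem:gramunitiff} together with the adjoint identity gives $\Psi^\dagger\Psi=T^\dagger\Phi^\dagger\Phi T$ and $\ker(\Phi T)=\ker(\Psi)$. Conversely, given a valid $T$ with $\Psi^\dagger\Psi=T^\dagger\Phi^\dagger\Phi T=(\Phi T)^\dagger(\Phi T)$ and $\ker(\Phi T)=\ker(\Psi)$, \cref{lem:gramunitiff} produces a unitary $U$ with $\Psi=U(\Phi T)=U\Phi T$, which is exactly switching equivalence of $\Phi$ and $\Psi$.

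I do not anticipate a genuine obstacle here; the statement is essentially a bookkeeping corollary of \cref{lem:gramunitiff}. The one point requiring care is the adjoint-of-a-composite computation, in particular making sure $T^\dagger$ is taken with respect to the \emph{standard} form on $\F^n$ and noting that the constraint $t_it_i^\sigma=1$ is equivalent to $T$ being unitary on $\F^n$ (hence invertible, so that $\ker(\Phi T)$ is well-behaved). A secondary trivial check is that $\Phi T$ qualifies as a collection of vectors in $V$ so that \cref{lem:gramunitiff} applies verbatim.
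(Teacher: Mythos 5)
Your proposal is correct and follows essentially the same route as the paper: both directions reduce to applying \cref{lem:gramunitiff} to the pair $(\Phi T,\Psi)$ after identifying $(\Phi T)^\dagger(\Phi T)$ with $T^\dagger\Phi^\dagger\Phi T$. Your added care about computing $T^\dagger$ with respect to the standard form on $\F^n$ is a fine (if implicit in the paper) detail, but the argument is the same.
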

\begin{proof}
    For the forward direction assume that $\Phi$ and $\Psi$ are switching equivalent. 
    In that case there exists an isometry $U$ and a diagonal matrix 
    $T=\diag(t_1,\dots, t_n)$ with entries satisfying $t_it_i^\sigma=1$ such that 
    $\Psi=U\Phi T$ meaning $\Psi$ and $\Phi T$ are unitarily equivalent so
    $\Psi^\dagger\Psi=T^\dagger\Phi^\dagger U^\dagger U\Phi T=T^\dagger\Phi^\dagger\Phi T$ and 
    $\ker(\Phi T)=\ker(\Psi)$.
    For the backwards direction assume $\Psi^\dagger\Psi=T^\dagger\Phi^\dagger\Phi T$ 
    and $\ker(\Phi T)=\ker(\Psi)$. Then $\Psi^\dagger\Psi=(\Phi T)^\dagger\Phi T$, so by 
    \cref{lem:gramunitiff}, there exists a isometry such that $\Psi= U\Phi T$.
\end{proof}

\begin{cor}
\label{lem:gramunit-switch}
    Let $\Phi =( \phi_j)_{j=1}^n, \Psi = ( \psi_j)_{j=1}^n  \se V$  where $V$ is a $d$-dimensional non-isotropic space. 
    $\Phi$ and $\Psi$ are unitarily equivalent if $\Psi^\dagger\Psi=\Phi^\dagger\Phi$
    and $\rk(\Phi)=\rk(\Psi)=\rk(\Phi^\dagger\Phi)$.
    
    Likewise $\Phi$ and $\Psi$ are switching equivalent if 
    $\Psi^\dagger\Psi=T^\dagger\Phi^\dagger\Phi T$ where $T=\diag(t_1,\dots,t_n)$ 
    with $t_it_i^\sigma=1$ and 
    $\rk(\Phi)=\rk(\Psi)=\rk(\Phi^\dagger\Phi)$
            
\end{cor}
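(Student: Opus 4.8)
The plan is to deduce both implications directly from \cref{lem:gramunitiff} and \cref{lem:gramswitch}, using the rank hypotheses only to recover the kernel conditions those lemmas require. The key elementary fact is that $\ker\Phi\se\ker(\Phi^\dagger\Phi)$ always holds (if $\Phi x=0$ then $\Phi^\dagger\Phi x=0$), and since $\Phi$ and $\Phi^\dagger\Phi$ have the common domain $\F^n$, the rank--nullity theorem turns the hypothesis $\rk(\Phi)=\rk(\Phi^\dagger\Phi)$ into the equality $\ker\Phi=\ker(\Phi^\dagger\Phi)$. This is essentially the observation already recorded in the paragraph preceding the statement, and one could equally phrase it via \cref{lem:nondegiff} (both $\Phi$ and $\Psi$ are frames for their spans) and \cref{lem:kerwhenframe}.

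For the unitary statement I would argue as follows. Since $\Psi^\dagger\Psi=\Phi^\dagger\Phi$, the hypothesis gives $\rk(\Psi)=\rk(\Psi^\dagger\Psi)$ as well. Applying the elementary fact above to both $\Phi$ and $\Psi$ yields $\ker\Phi=\ker(\Phi^\dagger\Phi)=\ker(\Psi^\dagger\Psi)=\ker\Psi$. Now $\Phi$ and $\Psi$ satisfy the hypotheses of \cref{lem:gramunitiff}, so there is a unitary $U:V\ra V$ with $\Psi=U\Phi$.

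For the switching statement I would pass to $\Phi':=\Phi T$. Because each $t_i$ satisfies $t_it_i^\sigma=1$ we have $t_i\neq 0$, so $T$ is invertible and $\rk(\Phi')=\rk(\Phi)$; moreover, with respect to the standard scalar product on $\F^n$ one has $T^\dagger=\diag(t_1^\sigma,\dots,t_n^\sigma)$, so $(\Phi')^\dagger\Phi'=T^\dagger\Phi^\dagger\Phi T=\Psi^\dagger\Psi$ and hence $\rk((\Phi')^\dagger\Phi')=\rk(\Psi^\dagger\Psi)=\rk(\Phi^\dagger\Phi)=\rk(\Phi)=\rk(\Phi')$. Thus $\Phi'$ and $\Psi$ fall under the unitary statement just proved, giving a unitary $U$ with $\Psi=U\Phi'=U\Phi T$, which is exactly switching equivalence of $\Phi$ and $\Psi$. (Alternatively the identity $\ker(\Phi T)=\ker\Psi$ can be obtained in the same way and fed straight into \cref{lem:gramswitch}.)

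There is no genuinely hard step: the corollary is a bookkeeping consequence of the two preceding lemmas. The only points needing care are the rank--nullity argument that converts $\rk(\Phi)=\rk(\Phi^\dagger\Phi)$ into $\ker\Phi=\ker(\Phi^\dagger\Phi)$ (making sure one compares kernels of maps with the same domain), and the verification that conjugating by $T$ preserves ranks, which rests on $t_it_i^\sigma=1$ forcing $T$ to be invertible. I would expect the final write-up to run only a few lines.
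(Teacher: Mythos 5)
Your proof is correct and follows essentially the same route as the paper's: both convert the rank hypotheses into the kernel equalities required by \cref{lem:gramunitiff} via the containment $\ker\Phi\se\ker(\Phi^\dagger\Phi)$ together with rank--nullity, and both handle the switching case by passing to $\Phi T$ with $T$ invertible. The only cosmetic difference is the direction of reduction (the paper proves the switching case and specializes to $T=I$, whereas you prove the unitary case and then reduce the switching case to it), which changes nothing substantive.
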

\begin{proof}
    The second statement implies the first when $T=I$; so, we will only show the second.
    Assume that $\Psi^\dagger\Psi=T^\dagger\Phi^\dagger\Phi T$ and 
    $\rk(\Phi)=\rk(\Psi)=\rk(\Phi^\dagger\Phi)$. Multiplication by $T$ does not
    change the rank so this implies that $\ker(\Phi T)=\ker(T^\dagger\Phi^\dagger\Phi T)=\ker(\Psi)$ 
    because of the original assumptions and that $\ker(\Phi T)\subseteq\ker(T^\dagger\Phi^\dagger\Phi T)$
    and $\ker(\Psi)\subseteq\ker(\Psi^\dagger\Psi)$.
    And so the corollary follows from \cref{lem:gramunitiff}.
\end{proof}

\cref{ex:ranknotiff} shows the necessity of equal ranks for $\Phi$ and 
$\Psi$ in \cref{lem:gramunit-switch}.

\begin{ex}\label{ex:ranknotiff}
    Consider the following matrices which represent collections of vectors in $\F_3^4$ with the standard dot product
    \[\Phi=\begin{bmatrix} 0 & 1\\0 & 1\\0 & 1\\1 & 0\\ \end{bmatrix}\text{ and }
        \Psi=\begin{bmatrix} 0 & 0\\0 & 0\\0 & 0\\1 & 0\\ \end{bmatrix}\]
    Notice that $\Phi$ and $\Psi$ have differing ranks and so are not unitarily
    equivalent. However both have the same Gram matrix
    \[G=\begin{bmatrix} 1 & 0\\0 & 0 \end{bmatrix}\]
    Furthermore $\Phi$ is unitarily equivalent to itself but 
    $\rk(\Phi)\neq \rk(\Phi^\dagger\Phi)$.
\end{ex}

In the complex case switching equivalence can be determined by the $m$-products of 
a system of lines, and in special cases by the triple products. We would like to 
generalize those results here. 
\begin{defn}
    Let $\Phi =( \phi_j)_{j=1}^n \se V$ where $V$ is a $d$-dimensional non-isotropic space. An \textbf{$m$-product} or 
    \textbf{$m$-vertex Bargmann invariant} is defined as
    \[
        \Delta(\phi_{j_1}, \dots, \phi_{j_m}) = \ip{\phi_{j_1}}{\phi_{j_2}}\ip{\phi_{j_2}}{\phi_{j_3}} \dots \ip{\phi_{j_m}}{\phi_{j_1}}.
    \]
    When $m=2$, these are called \textbf{double products}, and when $m=3$, these are
    called \textbf{triple products}.
\end{defn}
Many authors such as \cite{gallagher_orthogonal_1977,brehm_congruence_1998,appleby2011lie,chien_characterization_2016,FICKUS201898,waldron2020tight} have independently studied the structural importance of $m$-products for characterizing line packings in $\bR$, $\bC$, and $\bH$. In this section we wish to generalize some of these results to finite fields.

\begin{lem}\label{lem:mprodsw}
    Let $\Phi =( \phi_j)_{j=1}^n, \Psi = ( \psi_j)_{j=1}^n  \se V$  where $V$ is a $d$-dimensional non-isotropic space.
    If $\Phi$ and $\Psi$ are switching equivalent, then their $m$-products are all equal.
\end{lem}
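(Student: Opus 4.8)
The plan is to unwind the definition of switching equivalence into a statement about individual vectors and then compute directly. If $\Phi$ and $\Psi$ are switching equivalent, there is a unitary $U\colon V\to V$ and a diagonal matrix $T=\diag(t_1,\dots,t_n)$ with $t_it_i^\sigma=1$ such that $\Psi=U\Phi T$. Reading off the columns of the synthesis operators — note that $\Phi T$ has $j$-th column $t_j\phi_j$, so $U\Phi T$ has $j$-th column $t_jU\phi_j$ — this says precisely that $\psi_j=t_jU\phi_j$ for every $j$.

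Next I would express the double products of $\Psi$ in terms of those of $\Phi$. Using sesquilinearity (the scalar in the first slot comes out conjugated by $\sigma$, the scalar in the second slot comes out unchanged) together with the fact that a unitary is an isometry, i.e. $\ip{Uu}{Uv}=\ip{u}{v}$ for all $u,v\in V$, one obtains
\[
  \ip{\psi_j}{\psi_k}=\ip{t_jU\phi_j}{t_kU\phi_k}=t_j^\sigma t_k\ip{U\phi_j}{U\phi_k}=t_j^\sigma t_k\ip{\phi_j}{\phi_k}.
\]

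Finally I substitute this into the definition of the $m$-product and watch the scalars telescope around the cycle. For any indices $j_1,\dots,j_m$, reading the subscripts cyclically so that $j_{m+1}=j_1$,
\[
  \Delta(\psi_{j_1},\dots,\psi_{j_m})=\prod_{r=1}^{m}\ip{\psi_{j_r}}{\psi_{j_{r+1}}}=\Bigl(\prod_{r=1}^{m}t_{j_r}^\sigma t_{j_{r+1}}\Bigr)\prod_{r=1}^{m}\ip{\phi_{j_r}}{\phi_{j_{r+1}}}.
\]
In the scalar prefactor each $t_{j_r}$ occurs exactly once conjugated and once unconjugated, so it regroups as $\prod_{r=1}^{m}(t_{j_r}t_{j_r}^\sigma)=1$, and therefore $\Delta(\psi_{j_1},\dots,\psi_{j_m})=\Delta(\phi_{j_1},\dots,\phi_{j_m})$, which is the claim.

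There is essentially no obstacle here: the only points requiring care are the sesquilinearity convention (which slot carries the $\sigma$) and the bookkeeping of the cyclic product of the $t_i$, but since each factor satisfies $t_it_i^\sigma=1$, every $t_i$ that enters also cancels. Alternatively one could argue via the Gram-matrix identity $\Psi^\dagger\Psi=T^\dagger\Phi^\dagger\Phi T$ from \cref{lem:gramswitch}, observing that an $m$-product is a cyclic product of entries of the Gram matrix and that conjugation by $T$ leaves such cyclic products unchanged; but the direct column-wise computation above is the cleanest route.
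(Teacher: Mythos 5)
Your proof is correct and follows essentially the same route as the paper's: read off $\psi_j = t_j U\phi_j$ from $\Psi = U\Phi T$, use sesquilinearity and unitarity to get $\ip{\psi_j}{\psi_k}=t_j^\sigma t_k\ip{\phi_j}{\phi_k}$, and observe that the unimodular scalars telescope around the cycle. Your cyclic-index bookkeeping is in fact slightly cleaner than the paper's, which writes the scalar prefactor with indices $t_1,\dots,t_m$ rather than $t_{j_1},\dots,t_{j_m}$.
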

\begin{proof}
    Since $\Phi$ and $\Psi$ are switching equivalent, there exists a unitary $U:V\ra V$ and 
    $T=\diag(t_1, \dots, t_n)$ with $t_it_i^\sigma=1$ such 
    that  $\psi_j = t_j U \phi_j $ for all $j \in [n]$ 
    Thus for any  $\{j_1, \dots, j_m\} \subseteq [n]$
    \begin{align*}
        \Delta(\psi_{j_1}, \dots, \psi_{j_m}) & = \ip{\psi_{j_1}}{\psi_{j_2}}\ip{\psi_{j_2}}{\psi_{j_3}} \dots \ip{\psi_{j_m}}{\psi_{j_1}}                                                                              \\
                                               & = \ip{t_1 U \phi_{j_1}}{t_2 U\phi_{j_2}}\ip{t_2 U\phi_{j_2}}{t_3 U\phi_{j_3}} \dots \ip{t_m U\phi_{j_m}}{t_1 U\phi_{j_1}}                             \\
                                               & = t_1^\sigma t_2 t_2^\sigma t_3 \dots t_m^\sigma t_1 \ip{U \phi_{j_1}}{U\phi_{j_2}}\ip{U\phi_{j_2}}{U\phi_{j_3}} \dots \ip{U\phi_{j_m}}{U\phi_{j_1}} \\
                                               & = \ip{U \phi_{j_1}}{U\phi_{j_2}}\ip{U\phi_{j_2}}{U\phi_{j_3}} \dots \ip{U\phi_{j_m}}{U\phi_{j_1}}                                                     \\
                                               & =\ip{\phi_{j_1}}{\phi_{j_2}}\ip{\phi_{j_2}}{\phi_{j_3}} \dots \ip{\phi_{j_m}}{\phi_{j_1}}                                                             \\
                                               & =\Delta(\phi_{j_1}, \dots, \phi_{j_m}).
    \end{align*}
\end{proof}

For the remainder of this section we will assume any field $\F=\F_q$ is a finite 
field of case U or O where $q$ is odd. For case U, we have $\F=\F_{q^2}$ with the field involution
$a^\sigma=a^q$, and in Case O, $\F=\F_{q}$ with field involution $a^\sigma=a$. Because $q$ is odd, in both 
cases for any element $aa^\sigma\in \F^\times$, we have that there exists some $\gamma$ such that $\gamma^2=aa^\sigma$. This follows immediately when $\sigma$ is trivial, and for case U, notice that we may choose $\gamma=a^{(q+1)/2}$. With this observation we can define a non-unique square root function $\sqrt{-}:\F_q^{\times 2}\ra \F_q$ such that $\left(\sqrt{a}\right)^2=a$, and we will assume the choice of such a function is fixed throughout, making the following definition well-defined in Case O or U, with odd characteristic.

\begin{defn}\label{defn:phases}
    Let $\Phi =( \phi_j)_{j=1}^n \se V$ where $V$ is a $d$-dimensional non-isotropic space.
    If $\Delta(\phi_j,\phi_k) \neq 0$, define the 
    \textbf{exponential gauge} of $\phi_j$ and $\phi_k$, denoted $\eta_{j,k}$, as
    \[
        \eta_{j,k} = \frac{\ip{\phi_j}{\phi_k}}{\sqrt{\Delta(\phi_j,\phi_k)}}.
    \]
    Otherwise, set $\eta_{j,k} = 0$.
\end{defn}
Note that over $\C$, $\sqrt{\Delta(\phi_j,\phi_k)}$ would be the modulus of the inner product of 
the vectors and thus $\eta_{j,k}$ would be the signum of the inner product, which is the complex 
exponent of the gauge, except when the inner product is zero~\cite{appleby2011lie}.
In case O we note that $\eta_{j,k}=1$ for all $j,k$, and in general $\eta_{j,k}\eta_{k,j}=1$

The following generalizes Lemma 2.1 of \cite{chien_characterization_2016}.
\begin{prop}\label{prop:gaugeeq}
    Let $\Phi =( \phi_j)_{j=1}^n, \Psi = ( \psi_j)_{j=1}^n  \se V$ be frames
    for $V$, a $d$-dimensional non-isotropic space. 
    For each $j \neq k$, let $\eta_{j,k,\phi}$ be the exponential gauges of $\Phi$ and 
    $\eta_{j,k,\psi}$ the exponential gauges of $\Psi$.  Then $\Phi$ and $\Psi$ are 
    switching equivalent if and only if the following two conditions hold:
    \begin{itemize}
        \setlength\itemsep{0em}
        \item[(i)] For all $j, k \in [n]$, $\Delta(\phi_j,\phi_k) = \Delta(\psi_j,\psi_k)$.
        \item[(ii)] For all $j, k \in [n]$, there exist $\beta_j, \beta_k \in\F$  where 
        $\beta_j\beta_j^\sigma=1$ and $\beta_k\beta_k^\sigma=1$
        such that 
        \beq\label{eq:gauge} \eta_{j,k,\psi} = \eta_{j,k,\phi} \beta_j^\sigma \beta_k. \eeq
    \end{itemize}
\end{prop}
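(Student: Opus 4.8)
The plan is to deduce both implications from the Gram-matrix criteria for (switching) equivalence established above. For the forward implication, suppose $\Psi = U\Phi T$ with $U$ unitary and $T = \diag(t_1,\dots,t_n)$, $t_j t_j^\sigma = 1$. Condition (i) is then immediate from \cref{lem:mprodsw} with $m = 2$ (taking the two arguments to be $\phi_j$ and $\phi_k$, allowing $j = k$). For condition (ii), I would take $\beta_j = t_j$: since $U$ is an isometry, $\ip{\psi_j}{\psi_k} = t_j^\sigma t_k \ip{\phi_j}{\phi_k}$ for all $j,k$, so when $\ip{\phi_j}{\phi_k} \neq 0$ --- equivalently $\Delta(\phi_j,\phi_k) = \Delta(\psi_j,\psi_k) \neq 0$ by (i) --- dividing by the common square root $\sqrt{\Delta(\phi_j,\phi_k)}$ gives $\eta_{j,k,\psi} = t_j^\sigma t_k\, \eta_{j,k,\phi}$, and when $\ip{\phi_j}{\phi_k} = 0$ both gauges vanish and the relation holds trivially.

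For the backward implication --- the substantive direction --- I read condition (ii) as furnishing a single coherent family $(\beta_j)_{j=1}^n$ with $\beta_j \beta_j^\sigma = 1$ for which the gauge relation holds simultaneously for all $j,k$; this is the family that must be fed into the diagonal twist. Setting $T = \diag(\beta_1,\dots,\beta_n)$, the goal is to show $\Psi^\dagger\Psi = T^\dagger\Phi^\dagger\Phi T$, i.e.\ $\ip{\psi_j}{\psi_k} = \beta_j^\sigma \ip{\phi_j}{\phi_k}\beta_k$ for every $j,k$. Once this is done, I would finish by invoking \cref{lem:gramunit-switch}: since $\Phi$ and $\Psi$ are frames for the $d$-dimensional space $V$ and $T$ is invertible, $\rk(\Phi) = \rk(\Psi) = \rk(\Phi T) = d$, and $\rk(\Phi^\dagger\Phi) = \rk(\Phi) = d$ by \cref{lem:kerwhenframe}, so all the rank hypotheses hold and switching equivalence follows.

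The entrywise identity $\ip{\psi_j}{\psi_k} = \beta_j^\sigma \ip{\phi_j}{\phi_k}\beta_k$ I would verify by splitting on whether $\ip{\phi_j}{\phi_k}$ vanishes. If $\ip{\phi_j}{\phi_k} \neq 0$, then $\Delta(\phi_j,\phi_k) \neq 0$, hence $\Delta(\psi_j,\psi_k) \neq 0$ by (i), hence $\ip{\psi_j}{\psi_k} \neq 0$; writing $\ip{\psi_j}{\psi_k} = \eta_{j,k,\psi}\sqrt{\Delta(\psi_j,\psi_k)}$ and $\ip{\phi_j}{\phi_k} = \eta_{j,k,\phi}\sqrt{\Delta(\phi_j,\phi_k)}$ and substituting (i) and the gauge relation yields the claim. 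If $\ip{\phi_j}{\phi_k} = 0$, then $\Delta(\phi_j,\phi_k) = 0$, so $\Delta(\psi_j,\psi_k) = \ip{\psi_j}{\psi_k}\ip{\psi_j}{\psi_k}^\sigma = 0$ by (i), and since $\sigma$ is injective this forces $\ip{\psi_j}{\psi_k} = 0$, so both sides are $0$; the diagonal entries are covered by taking $j = k$ in one of these two cases. I expect this last case analysis to be the only real obstacle: over $\C$ a vanishing scalar product is harmless, but over a finite field one must explicitly handle the ``isotropic'' behavior where $\Delta$ vanishes while the underlying scalar product does not, and one must check that the gauges, being set to $0$ on such pairs, remain compatible with the single global family $(\beta_j)_j$ extracted from (ii).
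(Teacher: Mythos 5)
Your proof is correct and follows essentially the same route as the paper's: the forward direction sets $\beta_j = t_j$ and divides by the common square root of the double product (handling the vanishing case separately), and the backward direction builds $T=\diag(\beta_1,\dots,\beta_n)$, verifies $\Psi^\dagger\Psi = T^\dagger\Phi^\dagger\Phi T$ entrywise via the gauges, and closes with \cref{lem:gramunit-switch} using the frame hypothesis to supply the rank conditions. Your explicit treatment of the case $\ip{\phi_j}{\phi_k}=0$ (using that $\Delta(\psi_j,\psi_k)=0$ forces $\ip{\psi_j}{\psi_k}=0$ in a field) is slightly more careful than the paper's, which leaves that case implicit, but it is the same argument.
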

\begin{proof} 
    Given $j, k \in [n]$, let $\gamma_{j,k,\phi}:=\sqrt{\Delta(\phi_j,\phi_k)}$ and 
    $\gamma_{j,k,\psi}:=\sqrt{\Delta(\psi_j,\psi_k)}$.

    We first assume that $\Phi$ and $\Psi$ are switching equivalent, i.e., there
    exists a unitary $U:V\ra V$
    and elements $t_1, \hdots, t_n$ where $t_it_i^\sigma=1$ such that $\psi_j =
        t_ j U \phi_j $ for all $j \in [n]$. Let $j, k \in [n]$ with $j \neq k$.
    Thus, \cref{lem:mprodsw} implies that $\Delta(\phi_j,\phi_k) =
        \Delta(\psi_j,\psi_k)$ and further that
    $\gamma_{j,k,\phi}=\gamma_{j,k,\psi}$. Also,
    \begin{align*}
        \eta_{j,k,\psi} \gamma_{j,k,\psi} & = \ip{\psi_j}{\psi_k}  =  \ip{t_jU\phi_j}{t_k U\phi_k} = t_j^\sigma t_k   \ip{U\phi_j}{U\phi_k} \\
                                          & =  t_j^\sigma t_k \ip{\phi_j}{\phi_k} = t_j^\sigma t_k \eta_{j,k,\phi} \gamma_{j,k,\phi}.
    \end{align*}
    If $\gamma_{j,k,\phi}=\gamma_{j,k,\psi}=0$, then $\eta_{j,k,\phi}=\eta_{j,k,\psi}=0$; otherwise, one can divide both sides by $\gamma_{j,k,\phi}$.  In either case, after setting $\beta_j = t_j$ for all $j \in [n]$, condition (ii) holds.\\

    For the other implication, assume that (i) and (ii) hold. (i) implies that for any
    $j \neq k$, $\gamma_{j,k,\phi}=\gamma_{j,k,\psi}$. Let $\tilde{\phi}_j =
        \beta_j \phi_j$ for all $j \in [n]$. Further, we use (ii) to calculate
    \begin{align*}
        \ip{\tilde{\phi}_j}{\tilde{\phi}_k} & = \ip{\beta_j\phi_j}{\beta_k\phi_k} = \beta_j^\sigma \beta_k \ip{\phi_j}{\phi_k} \\
                                                  & = \beta_j^\sigma \beta_k \eta_{j,k,\phi} \gamma_{j,k,\phi}                             \\
                                                  & = \eta_{j,k,\psi} \gamma_{j,k,\psi} = \ip{\psi_j}{\psi_k}.
    \end{align*}
    This shows that $\Psi^\dagger\Psi=T^\dagger\Phi^\dagger\Phi T$ where $T=\diag(\beta_1,\dots,\beta_n)$

    Thus from $\Phi$ and $\Psi$ being frames and \cref{lem:gramunit-switch}, 
    $(\tilde{\phi}_j)_{j=1}^n$ is unitarily equivalent to $\Psi$, implying that $\Phi$ 
    and $\Psi$ are switching equivalent.
\end{proof}

The following generalizes Theorem 2.2 of \cite{chien_characterization_2016}.
\begin{thm}\label{thm:TPs}
    Let $\Phi =( \phi_j)_{j=1}^n, \Psi = ( \psi_j)_{j=1}^n  \se V$ be frames for $V$, where $V$ is a $d$-dimensional non-isotropic space. 
    If $\ip{\phi_j}{\phi_k} \neq 0$ and $\ip{\psi_j}{\psi_k} \neq 0$ for 
    any $j,k \in [n]$ with $j\neq k$, then $\Phi$ and $\Psi$ are switching equivalent 
    if and only if the following two conditions hold:
    \begin{itemize}
        \item[(i)] For all $j, k \in [n]$, $\Delta(\phi_j,\phi_k) = \Delta(\psi_j,\psi_k)$.
        \item[(ii)] For all $j, k, \ell \in [n]$, $\Delta(\phi_j,\phi_k,\phi_\ell) =
                  \Delta(\psi_j,\psi_k,\psi_\ell)$.
    \end{itemize}
\end{thm}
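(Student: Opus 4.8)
The forward implication is immediate: if $\Phi$ and $\Psi$ are switching equivalent, then by \cref{lem:mprodsw} all of their $m$-products agree, in particular the double products ($m=2$) and the triple products ($m=3$). The plan for the converse is to verify the two hypotheses of \cref{prop:gaugeeq} and then invoke it. Condition (i) of \cref{prop:gaugeeq} is literally condition (i) here, so all of the work lies in producing phases $\beta_j\in\F$ with $\beta_j\beta_j^\sigma=1$ that realize \cref{eq:gauge}.

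Since every double product is nonzero, each exponential gauge $\eta_{j,k,\phi}$ (and $\eta_{j,k,\psi}$) is a unit of $\F$, so for $j\neq k$ we may set $\theta_{j,k}:=\eta_{j,k,\psi}\,\eta_{j,k,\phi}^{-1}\in\F^\times$. The two structural facts I would establish about $\theta$ are: (a) $\theta_{j,k}\theta_{j,k}^\sigma=1$, and (b) the cocycle identity $\theta_{j,k}\theta_{k,\ell}\theta_{\ell,j}=1$ for distinct $j,k,\ell$. For (a): $\Delta(\phi_j,\phi_k)=\ip{\phi_j}{\phi_k}\ip{\phi_j}{\phi_k}^\sigma\in\F_0^\times$, so its chosen square root $r$ satisfies $(r^\sigma)^2=\Delta(\phi_j,\phi_k)^\sigma=\Delta(\phi_j,\phi_k)=r^2$, hence $r^\sigma=\pm r$; writing $\epsilon_{j,k}:=r/r^\sigma\in\{\pm1\}$, a one-line computation gives $\eta_{j,k,\phi}^\sigma=\epsilon_{j,k}\,\eta_{k,j,\phi}$ and likewise for $\Psi$ with the \emph{same} sign $\epsilon_{j,k}$ (it depends only on the value $\Delta(\phi_j,\phi_k)=\Delta(\psi_j,\psi_k)$), so the signs cancel and $\theta_{j,k}^\sigma=\theta_{k,j}=\theta_{j,k}^{-1}$, the last equality from $\eta_{j,k}\eta_{k,j}=1$. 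For (b): expand $\Delta(\phi_j,\phi_k,\phi_\ell)=\ip{\phi_j}{\phi_k}\ip{\phi_k}{\phi_\ell}\ip{\phi_\ell}{\phi_j}$, substitute $\ip{\phi_j}{\phi_k}=\eta_{j,k,\phi}\sqrt{\Delta(\phi_j,\phi_k)}$, and cancel the nonzero square-root factors (which agree for $\Phi$ and $\Psi$ by condition (i)) against the identical expansion for $\Psi$; condition (ii) then reduces to $\eta_{j,k,\phi}\eta_{k,\ell,\phi}\eta_{\ell,j,\phi}=\eta_{j,k,\psi}\eta_{k,\ell,\psi}\eta_{\ell,j,\psi}$, which after dividing is (b).

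Given (a) and (b), set $\beta_1=1$ and $\beta_k=\theta_{1,k}$ for $k\geq 2$. Then $\beta_k\beta_k^\sigma=1$ by (a), and a direct case check — immediate when $j=1$ or $k=1$, and using the cocycle identity on $(j,k,1)$ together with $\theta_{j,k}^{-1}=\theta_{k,j}$ when $j,k\geq 2$ — shows $\theta_{j,k}=\beta_j^\sigma\beta_k$, i.e., $\eta_{j,k,\psi}=\eta_{j,k,\phi}\,\beta_j^\sigma\beta_k$ for all $j\neq k$. For the diagonal case $j=k$ needed by \cref{prop:gaugeeq}, condition (ii) applied to $(j,j,k)$ reads $\ip{\phi_j}{\phi_j}\,\Delta(\phi_j,\phi_k)=\ip{\psi_j}{\psi_j}\,\Delta(\psi_j,\psi_k)$ with $\Delta(\phi_j,\phi_k)=\Delta(\psi_j,\psi_k)\neq 0$, forcing $\ip{\phi_j}{\phi_j}=\ip{\psi_j}{\psi_j}$ and hence $\eta_{j,j,\phi}=\eta_{j,j,\psi}$, so \cref{eq:gauge} holds there too (since $\beta_j^\sigma\beta_j=1$); the lone case $n=1$ instead follows by combining (i) and (ii) at the single index to get equal norms, and then by Witt's extension theorem. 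With both hypotheses of \cref{prop:gaugeeq} in hand, $\Phi$ and $\Psi$ are switching equivalent.

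I expect the main obstacle to be fact (a): over $\F_{q^2}$ there is in general no square-root function compatible with the involution, so $\eta_{j,k}$ itself need not have unit ``norm''; the resolution is the observation that the obstruction $\epsilon_{j,k}$ is a function of the double product alone and so cancels when comparing $\Phi$ with $\Psi$. Tracking the repeated-index and small-$n$ degeneracies is a secondary bookkeeping nuisance, and the cocycle-to-coboundary step (b)$\Rightarrow$ the $\beta_j$'s is the standard argument.
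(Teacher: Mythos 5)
Your proposal is correct and follows essentially the same route as the paper: the forward direction via \cref{lem:mprodsw}, and the converse by reducing to \cref{prop:gaugeeq}, extracting the multiplicative identity on gauge ratios from the equal triple products, and anchoring the phases at a fixed reference index (your $\beta_k=\theta_{1,k}$ is exactly the paper's $\beta_j=\eta_{j,\ell,\phi}\eta_{\ell,j,\psi}$ with $\ell=1$). Your fact (a) is a welcome extra precision: the paper simply asserts $\eta_{j,k}^\sigma=\eta_{k,j}$, which in Case U can fail by a sign since $\sqrt{\Delta(\phi_j,\phi_k)}$ need not be fixed by $\sigma$, and your observation that this sign depends only on the shared double product and hence cancels in $\theta_{j,k}$ is precisely what makes the unimodularity of the $\beta_j$'s rigorous.
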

\begin{proof}
    One implication follows immediately from \cref{lem:mprodsw}. For the other 
    implication, suppose that the double and triple products of $\Phi$ and $\Psi$ are 
    equal. For each $j \neq k$, let $\eta_{j,k,\phi}$ be the exponential gauges of 
    $\Phi$ and $\eta_{j,k,\psi}$ the exponential gauges of $\Psi$.  Since the double 
    products are equal, for any $j \neq k$, there exists $\beta_{j,k} \in \F_q$ such 
    that $\Delta(\phi_j,\phi_k) = \Delta(\psi_j,\psi_k) = \beta_{j,k}\neq 0$.  
    Let $\gamma_{j,k} =\sqrt{\beta_{j,k}}\neq 0$. Consider $j \neq k \neq \ell \neq j$. 
    Then
    \begin{align*}
        \eta_{j,k,\phi}\eta_{k,\ell,\phi}\eta_{\ell,j,\phi}\gamma_{j,k}\gamma_{k,\ell}\gamma_{\ell,j} & = \Delta(\phi_j,\phi_k,\phi_\ell) = \Delta(\psi_j,\psi_k,\psi_\ell)=\eta_{j,k,\psi}\eta_{k,\ell,\psi}\eta_{\ell,j,\psi}\gamma_{j,k}\gamma_{k,\ell}\gamma_{\ell,j},
    \end{align*}
    implying that
    \[
        \eta_{j,k,\phi}\eta_{k,\ell,\phi}\eta_{\ell,j,\phi} = \eta_{j,k,\psi}\eta_{k,\ell,\psi}\eta_{\ell,j,\psi}.
    \]
    Fix $\ell$ and note that $\eta_{j,k}^{-1} =\eta_{j,k}^\sigma = \eta_{k,j}$ to
    obtain
    \[
        \eta_{j,k,\psi}=\eta_{j,k,\phi}\left(\eta_{k,\ell,\phi}\eta_{\ell,k,\psi}\right)\left(\eta_{j,\ell,\phi} \eta_{\ell,j,\psi}\right)^\sigma.
    \]
    For any $j \in [n]$, set $\beta_j = \eta_{j,\ell,\phi} \eta_{\ell,j,\psi}$.
    Then (ii) in \cref{prop:gaugeeq} is satisfied, and $\Phi$ and $\Psi$
    are switching equivalent.
\end{proof}

\begin{cor}\label{cor:TP_ETF}
    Let $\Phi =( \phi_j)_{j=1}^n, \Psi = ( \psi_j)_{j=1}^n  \se V$ be 
    $(a,b)$-equiangular systems that are frames for $V$, where $V$ is a $d$-dimensional non-isotropic space.  Then $\Phi$ and $\Psi$ are switching equivalent if and 
    only if for all $j < k < \ell$, $\Delta(\phi_j,\phi_k,\phi_\ell) = \Delta(\psi_j,\psi_k,\psi_\ell)$.
\end{cor}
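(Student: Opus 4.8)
\textit{Proof proposal.} The plan is to deduce this from \cref{thm:TPs} by checking that the hypotheses of that theorem hold automatically for equiangular systems whenever $b\neq 0$, and by treating the value $b=0$ (which falls outside \cref{thm:TPs}) separately. The forward implication needs no hypothesis on the parameters: if $\Phi$ and $\Psi$ are switching equivalent, then \cref{lem:mprodsw} forces all their $m$-products to agree, in particular $\Delta(\phi_j,\phi_k,\phi_\ell)=\Delta(\psi_j,\psi_k,\psi_\ell)$ for every triple, hence for $j<k<\ell$.

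For the converse, assume the triple products agree on all strictly increasing triples, and first suppose $b\neq 0$. Because $\ip{\phi_k}{\phi_j}=\ip{\phi_j}{\phi_k}^\sigma$, equiangularity with $b\neq 0$ forces $\ip{\phi_j}{\phi_k}\neq 0$ and $\ip{\psi_j}{\psi_k}\neq 0$ for all $j\neq k$, which is exactly the standing hypothesis of \cref{thm:TPs}; it remains to verify its conditions (i) and (ii). Condition (i) is automatic: $\Delta(\phi_j,\phi_k)=\ip{\phi_j}{\phi_k}\ip{\phi_k}{\phi_j}=b=\Delta(\psi_j,\psi_k)$ for $j\neq k$, and $\Delta(\phi_j,\phi_j)=\ip{\phi_j}{\phi_j}^2=a^2=\Delta(\psi_j,\psi_j)$. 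For (ii) one must upgrade equality of $\Delta$ on increasing triples to equality on all triples: if two or three of the indices coincide the triple product equals $ab$ (respectively $a^3$) for both families by equiangularity and hence agrees, while for distinct indices the triple product is invariant under cyclic rotation of its three arguments and is replaced by its $\sigma$-conjugate under reversal (since $\ip{u}{v}=\ip{v}{u}^\sigma$), so equality for the increasing order propagates to all $3!$ orders. With (i) and (ii) in hand, \cref{thm:TPs} gives switching equivalence.

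It remains to handle $b=0$ (and, trivially, $n\le 2$, for which the hypothesis is vacuous and the claim follows directly from \cref{lem:gramunit-switch}). If $b=0$, then $\ip{\phi_j}{\phi_k}\ip{\phi_k}{\phi_j}=0$ together with $\ip{\phi_k}{\phi_j}=\ip{\phi_j}{\phi_k}^\sigma$ forces $\ip{\phi_j}{\phi_k}=0$ for all $j\neq k$, so the vectors of $\Phi$, and likewise those of $\Psi$, are pairwise orthogonal of common norm $a$; here $a\neq 0$, for otherwise $\image\Phi=V$ would be totally isotropic, contradicting that $V$ is non-isotropic. Pairwise-orthogonal vectors of nonzero norm are linearly independent, so $n=\dim V=\rk(\Phi)=\rk(\Psi)$ and $\Phi^\dagger\Phi=\Psi^\dagger\Psi=aI$, which has rank $n$ as well; \cref{lem:gramunit-switch} (with $T=I$) then yields unitary, hence switching, equivalence, while the triple-product hypothesis is vacuously true, so the stated equivalence holds in this case too. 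I expect the only real obstacle to be the bookkeeping in condition (ii): verifying that equality on increasing triples alone suffices (using cyclic invariance, conjugate symmetry, and the equiangularity-driven collapse of repeated-index triples), together with remembering to peel off the $b=0$ case, which is not covered by \cref{thm:TPs} but is genuinely easy.
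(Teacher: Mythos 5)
Your proposal is correct and follows essentially the same route as the paper: the forward direction via \cref{lem:mprodsw}, the $b=0$ case via the Gram matrices both being $aI_n$ and \cref{lem:gramunit-switch}, and the $b\neq 0$ case by verifying the hypotheses of \cref{thm:TPs}, with repeated-index triple products collapsing to $a^3$ or $ab$ and the remaining orderings handled by cyclic invariance and $\sigma$-conjugation under reversal. Your added remarks (that $b\neq 0$ forces all off-diagonal scalar products to be nonzero, and that $a\neq 0$ with linear independence in the $b=0$ case) make explicit details the paper leaves implicit, but do not change the argument.
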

\begin{proof}
    If $b = 0$, then the Gram matrices of both $\Phi$ and $\Psi$ are $aI_n$, and 
    \cref{lem:gramunit-switch} yields that $\Phi$ and $\Psi$ are unitarily equivalent and 
    thus switching equivalent (with necessarily the same triple products, 
    \cref{lem:mprodsw}).  We now assume $b \neq 0$ and are able to apply 
    \cref{thm:TPs}.  Note that
    \[
        \Delta(\phi_j,\phi_k) = \Delta(\psi_j,\psi_k) = 
        \Big\{ \begin{array}{lr} a^2 & j=k \\ b & j \neq k\end{array}
    \]
    Further note that
    \begin{align*}
        \Delta(\phi_j,\phi_j,\phi_j)    & = a^3                                                                                                 \\
        \Delta(\phi_j,\phi_k,\phi_k)    & = \ip{\phi_j}{ \phi_k}  \ip{\phi_k}{\phi_k}  \ip{\phi_k}{\phi_j} =ab, \quad j \neq k \\
        \Delta(\phi_j,\phi_k,\phi_\ell) & =  \ip{\phi_j}{\phi_k}  \ip{\phi_k}{\phi_\ell}  \ip{\phi_\ell}{\phi_j}
        =  \ip{\phi_\ell}{\phi_j} \ip{\phi_j}{\phi_k}  \ip{\phi_k}{\phi_\ell} = \Delta(\phi_\ell,\phi_j,\phi_k)               \\
        \Delta(\phi_j,\phi_k,\phi_\ell) & =  \ip{\phi_j}{\phi_k}  \ip{\phi_k}{\phi_\ell}  \ip{\phi_\ell}{\phi_j}
        = \left(\ip{\phi_j}{\phi_\ell}  \ip{\phi_\ell}{\phi_k}  \ip{\phi_k}{\phi_j}\right)^\sigma= \left(\Delta(\phi_j,\phi_\ell,\phi_k)\right)^\sigma.
    \end{align*}
    Thus, it suffices to check the triple products for $j < k < \ell$.
\end{proof}

Now we will show that in general, without the requirement of
non-zero scalar products or $(a,b)$-equiangularity, switching equivalence cannot 
be characterized for frames by
the triple products alone and may require the $n$-products where $n$ is the
number of vectors. That is, we will show that there are two frames that have the 
same double and triple products but are not switching equivalent. The following
generalizes Example 2.5 from \cite{chien_characterization_2016}.
\begin{ex}\label{ex:2.5}
    Let $(e_j)_{j=1}^n$ be the standard basis for $\F^n_q$, along with the standard dot product, where $q$ is an odd prime and $n>3$, 
    which is a non-isotropic space.
    For $z\in\{\pm 1\}$ consider the collection of $n+1$ vectors
    
    \[v_j=\begin{cases}
            e_j+e_{j+1} & 1\leq j< n \\
            e_n+ze_{1}  & j=n\\
            e_n & j=n+1
        \end{cases}\]
    where for either choice of $z$, $(v_j)_{j=1}^{n+1}$ forms a frame for $\F_{q}^{n}$ with the 
    following scalar products
    \begin{itemize}  
        \setlength\itemsep{0em}                    
        \item $\ip{v_j}{v_{j+1}}= 1$ for $j \leq n$
        \item $\ip{v_1}{v_{n}}= z$, $\ip{v_{n-1}}{v_{n+1}}= 1$
        \item $\ip{v_i}{v_{j}}= 0$ in all other cases where $i\neq j$
    \end{itemize}
    This means that the only non zero $m$-products of distinct vectors are
    \begin{itemize}
        \setlength\itemsep{0em}
        \item $\Delta(v_j)=2$ for $j\leq n$ and $\Delta(v_{n+1})=1$
        \item $\Delta(v_j, v_{j+1})= 1$ for $j \leq n$, $\Delta(v_{n-1}, v_{n+1})= 1$, and $\Delta(v_1, v_{n})= 1$
        \item $\Delta(v_{n-1},v_{n}, v_{n+1})=1$
        \item $\Delta(v_1,v_2,\dots, v_n)= z$
    \end{itemize}
    This is not an equiangular system as the non-consecutive vectors have scalar
    products that are zero. 
    For either choice of $z$ the $m$-products are equivalent for all $m<n$ and differ only 
    for the $n$-products. Since the $n$-products are not equal, \cref{lem:mprodsw} implies the two sequences are not switching equivalent.
\end{ex}
In general, switching equivalence of collections of vectors (regardless of if they form frames) is completely determined by the $m$-products for all $m\leq n$.  This result over $\bC$ is due to  \cite{chien_characterization_2016}, but their proof holds for any non-isotropic spaces with almost no modifications, which we emulate here. We note that this result was discovered in an equivalent form years earlier in \cite{gallagher_orthogonal_1977}.
\begin{prop}
Let $\Phi =( \phi_j)_{j=1}^n, \Psi = ( \psi_j)_{j=1}^n  \se V$ be collections of
vectors in $V$, a $d$-dimensional non-isotropic space with $\ker(\Phi)=\ker(\Psi)$.
Then $\Phi$ and $\Psi$ are switching equivalent if and only if 
$\Delta(\phi_{j_1},\phi_{j_2},\dots,\phi_{j_m})=\Delta(\psi_{j_1},\psi_{j_2},\dots,\psi_{j_m})$ 
for all $m\leq n$ where 
$1\leq j_{\ell}\leq n$ for all $\ell$, that is all $m$-products for $\Phi$ and $\Psi$ are equal.
\end{prop}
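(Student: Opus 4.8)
The forward implication is exactly \cref{lem:mprodsw}, so the plan is to prove the converse: assuming all $m$-products agree and $\ker(\Phi)=\ker(\Psi)$, build an explicit switching, following the strategy of \cite{chien_characterization_2016} (and \cite{gallagher_orthogonal_1977}). First I would extract combinatorial data from the small $m$-products. From $m=1$ we get $\ip{\phi_j}{\phi_j}=\ip{\psi_j}{\psi_j}$ for every $j$; from $m=2$ we get $\ip{\phi_j}{\phi_k}\ip{\phi_j}{\phi_k}^\sigma=\ip{\psi_j}{\psi_k}\ip{\psi_j}{\psi_k}^\sigma$, and since $aa^\sigma=0$ forces $a=0$ in $\F$, this gives $\ip{\phi_j}{\phi_k}=0\iff\ip{\psi_j}{\psi_k}=0$. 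Let $G$ be the graph on $[n]$ with $j\sim k$ exactly when $\ip{\phi_j}{\phi_k}\neq 0$; it is the same for $\Phi$ and $\Psi$, and $\image(\Phi)$ (likewise $\image(\Psi)$) decomposes as an orthogonal direct sum over the connected components of $G$.

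Next I would construct the diagonal phase matrix $T=\diag(t_1,\dots,t_n)$. For each edge $\{j,k\}$ of $G$ put $r_{jk}=\ip{\psi_j}{\psi_k}/\ip{\phi_j}{\phi_k}$; the $m=2$ identity gives $r_{jk}r_{jk}^\sigma=1$ and $r_{kj}=r_{jk}^\sigma=r_{jk}^{-1}$. In each connected component fix a spanning tree and a root $\rho$, set $t_\rho=1$, and propagate down the tree by $t_k=(t_j^\sigma)^{-1}r_{jk}$ along each tree edge from parent $j$ to child $k$; an induction shows $t_k t_k^\sigma=1$, and by construction $t_j^\sigma t_k=r_{jk}$ on every tree edge. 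For a non-tree edge $\{a,b\}$ with $\ip{\phi_a}{\phi_b}\neq 0$, its fundamental cycle $a=c_0,c_1,\dots,c_s=b$ has length $s+1\le n$, so the hypothesis applies to the $(s{+}1)$-product around it; dividing $\Delta(\psi_{c_0},\dots,\psi_{c_s})$ by $\Delta(\phi_{c_0},\dots,\phi_{c_s})$, substituting $r_{c_ic_{i+1}}=t_{c_i}^\sigma t_{c_{i+1}}$ on the tree edges and telescoping (using $t_{c_i}^\sigma=t_{c_i}^{-1}$) collapses the product to $t_a^\sigma t_b\, r_{ba}=1$, hence $r_{ab}=t_a^\sigma t_b$ on the non-tree edge as well. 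Together with the $m=1$ identity on the diagonal and the trivial orthogonal pairs, this yields $\ip{\psi_j}{\psi_k}=t_j^\sigma t_k\ip{\phi_j}{\phi_k}=\ip{t_j\phi_j}{t_k\phi_k}$ for all $j,k$, i.e.\ $\Psi^\dagger\Psi=T^\dagger\Phi^\dagger\Phi T$ with $T$ unitary.

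To conclude via \cref{lem:gramswitch} it remains to verify $\ker(\Phi T)=\ker(\Psi)$. Since $\ker(\Phi T)=T^{-1}\ker(\Phi)$ and $\ker(\Phi)=\ker(\Psi)$ by hypothesis, this is equivalent to $T(\ker\Phi)=\ker\Phi$, i.e.\ to $\Phi T x=0$ for every $x\in\ker\Phi$, and this is precisely where the hypothesis $\ker\Phi=\ker\Psi$ is used. For $x\in\ker\Phi=\ker\Psi$, the identity from the previous paragraph gives $\ip{\Phi Tx}{\phi_k}=t_k^\sigma\ip{\Psi x}{\psi_k}=0$ for all $k$, so $\Phi Tx\in\image(\Phi)\cap\image(\Phi)^\perp=\rad(\image\Phi)$. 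When $\image\Phi$ is non-isotropic this radical is $\{0\}$ and we are done. The degenerate-span case is the delicate point and I expect it to be the main obstacle: it must be closed either by a further argument that $\Phi Tx$ actually vanishes (e.g.\ combining $\Phi Tx\in\rad(\image\Phi)$ with the symmetric statement $\Psi T^{-1}y\in\rad(\image\Psi)$ for $y\in\ker\Psi$, applied to $y=Tx$) or by reducing to the non-isotropic quotient $\image\Phi/\rad(\image\Phi)$. Once $\ker(\Phi T)=\ker(\Psi)$ is established, \cref{lem:gramswitch} — equivalently \cref{lem:gramunitiff} applied to $\Phi T$ and $\Psi$, followed by Witt's extension theorem — shows that $\Phi$ and $\Psi$ are switching equivalent.
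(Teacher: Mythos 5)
Your proposal follows the paper's own argument essentially step for step: the forward direction is \cref{lem:mprodsw}, and the converse proceeds via the common correlation network, a spanning tree per component, propagation of unimodular constants along tree edges, and the fundamental‑cycle $m$-product to verify the remaining edges, yielding $\Psi^\dagger\Psi=T^\dagger\Phi^\dagger\Phi T$. Up to that point both arguments are correct. The step you single out as ``the delicate point''---verifying $\ker(\Phi T)=\ker(\Psi)$ so that \cref{lem:gramswitch} applies---is a genuine gap, and it is in fact also a gap in the paper's own proof, which stops after matching the Gram matrices and never addresses the kernel condition. Your computation shows only $\Phi Tx\in\rad(\image\Phi)$ for $x\in\ker\Phi$, which suffices precisely when $\image\Phi$ is non-isotropic (equivalently, $\ker\Phi=\ker(\Phi^\dagger\Phi)$, so $T\ker\Phi\se\ker(\Phi^\dagger\Phi)=\ker\Phi$ and a dimension count finishes). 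In the degenerate-span case neither of your proposed repairs can succeed, because the statement itself is false there.

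Here is a counterexample. In $V=\F_5^5$ in the real model let $e=(1,0,0,0,0)$, $u_1=(0,1,2,0,0)$, $u_2=(0,0,0,1,2)$, so $u_1,u_2$ are isotropic and $e,u_1,u_2$ are pairwise orthogonal, giving $\rad(\lspan\{e,u_1,u_2\})=\lspan\{u_1,u_2\}$. Set
\[
\phi_1=e+u_1+u_2,\quad \phi_2=-e-u_1+u_2,\quad \phi_3=e-u_1-u_2,\quad \phi_4=-e+u_1-u_2,
\]
and let $\Psi=(\phi_1,\phi_2,\phi_4,\phi_3)$ be the same vectors with the last two transposed. Then $\ip{\phi_i}{\phi_j}=b_ib_j$ with $b=(1,-1,1,-1)$ and $\ip{\psi_i}{\psi_j}=b'_ib'_j$ with $b'=(1,-1,-1,1)$, so every $m$-product of either sequence telescopes to $\prod_i b_{j_i}^2=1$ and all $m$-products agree; moreover $\phi_1+\phi_2+\phi_3+\phi_4=0$ and $\ker\Phi=\ker\Psi=\lspan\{(1,1,1,1)\}$. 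Yet any diagonal $T$ with $\Psi^\dagger\Psi=T^\dagger\Phi^\dagger\Phi T$ must satisfy $t_it_j=\epsilon_i\epsilon_j$ for $\epsilon=(1,1,-1,-1)$, forcing $T=\pm\diag(1,1,-1,-1)$ and hence $\ker(\Phi T)=\lspan\{(1,1,-1,-1)\}\neq\ker\Psi$; equivalently, a unitary $U$ with $\psi_j=t_jU\phi_j$ would have to send $\phi_1+\phi_2+\phi_3+\phi_4=0$ to $\pm(\phi_1+\phi_2-\phi_3-\phi_4)=\mp u_2\neq 0$. So $\Phi$ and $\Psi$ satisfy every hypothesis of the proposition but are not switching equivalent. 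The statement (and your proof, via your own radical computation) is correct under the additional hypothesis that $\image\Phi$ and $\image\Psi$ are non-isotropic, i.e.\ that $\Phi$ and $\Psi$ are frames for their spans---in which case the kernel hypothesis is in fact automatic.
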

\begin{proof}
    We have already shown the forward direction in \cref{lem:mprodsw}. 
    For the other direction
    we follow the same approach used in \cite{chien_characterization_2016} which utilizes the correlation network of a frame, which was introduced in \cite{Strawn2009GeometryAC}.

    Assume that the $m$-products of $\Phi$ and $\Psi$ are equal.
    Let $\Gamma(\Phi)$ be the correlation network of $\Phi$: a graph whose vertices 
    are the indices $1,2,\dots,n$ where vertex $j$ corresponds to 
    the vector $\phi_j$, and an edge, denoted $(j,k)$, exists between $j$ and $k$ when $\ip{\phi_j}{\phi_k}\neq 0$.
    Notice that $\Delta(\phi_j,\phi_k)=0$ if and only if $\ip{\phi_j}{\phi_k}=0$ meaning $\Gamma(\Phi)=\Gamma(\Psi)$.
    Call their common correlation network $\Gamma$. We may assume that $\Gamma$ is 
    connected, as if it were not we could restrict ourselves to looking at each component individually. 
    Because $\Gamma$ is connected there exists a
    spanning tree $T$ for $\Gamma$. Fix a root vertex $r$ and let $c_r=1$. Let $v$ be a child vertex of $r$ 
    and because $\Delta(\phi_r,\phi_v)=\Delta(\psi_r,\psi_v)$ we know that there exists some unimodular constant
    $c_v$(i.e. $c_v^\sigma c_v=1$) where $\ip{\phi_r}{\phi_v}=c_v^\sigma\ip{\psi_r}{\psi_v}=\ip{c_r\psi_r}{c_v\psi_v}$. Repeating this 
    process inductively we have that for every non-root vertex $k$ whose parent vertex 
    is $j$ in the tree, that 
    \[c_k=\left(\frac{\ip{\phi_j}{\phi_k}}{\ip{c_j\psi_j}{\psi_k}}\right)^\sigma\]
    which satisfies $\Delta(\phi_j,\phi_k)=\Delta(\psi_j,\psi_k)=\Delta(c_j\psi_j,c_k\psi_k)$
    and $\ip{\phi_j}{\phi_k}=c_k^\sigma\ip{c_j\psi_j}{\psi_k}=\ip{c_j\psi_j}{c_k\psi_k}$.
    This process gives us a collection of unimodular constants $c_1,\dots, c_n$ for each vertex of $\Gamma$ such that by construction
    $\ip{\phi_j}{\phi_k}=\ip{c_j\psi_j}{c_k\psi_k}$ whenever $(j,k)$ is an edge in the spanning tree $T$.

    Now, we must show that our chosen constants respect the other non-zero scalar products.
    Let $e=(\ell,k)$ be an edge of the graph $\Gamma$ such that $e$ is not an edge of $T$. The addition of $e$ into $T$ creates
    a unique cycle in $e\cup T$ with vertices $(j_1,j_2,\dots,j_{m-2},\ell,k)$ of $m\leq n$ vertices.
    By the initial assumption and the fact that each $c_j$ is unimodular we know that 
    \[\Delta(\phi_{j_1},\dots,\phi_{j_{m-2}},\phi_{\ell},\phi_{k})=\Delta(\psi_{j_1},\dots,\psi_{j_{m-2}},\psi_{\ell},\psi_{k})=\Delta(c_{j_1}\psi_{j_1},\dots,c_{j_{m-2}}\psi_{j_{m-2}},c_\ell\psi_{\ell},c_k\psi_{k})\]
    This means that 
    \begin{multline*}
        \ip{\phi_{j_1}}{\phi_{j_2}}\cdots\ip{\phi_{j_{m-2}}}{\phi_{\ell}}\ip{\phi_{\ell}}{\phi_{k}}\ip{\phi_{k}}{\phi_{j_1}}\\
        =\ip{c_{j_1}\psi_{j_1}}{c_{j_2}\psi_{j_2}}\cdots\ip{c_{j_{m-2}}\psi_{j_{m-2}}}{c_\ell\psi_{\ell}}\ip{c_{\ell}\psi_{\ell}}{c_k\psi_{k}}\ip{c_k\psi_{k}}{c_{j_1}\psi_{j_1}}
    \end{multline*}
     From the construction of the unimodular constants we know that 
     $\ip{\phi_i}{\phi_j}=\ip{c_i\psi_i}{c_j\psi_j}$ when $(i,j)\in T$ so it must be the 
     case that 
     $\ip{\phi_\ell}{\phi_k}=\ip{c_\ell\psi_\ell}{c_k\psi_\ell}$ for the added edge $e$.
\end{proof}

\section{Combinatorial Designs: Two-Graphs and Quasi-Symmetric Designs}
\label{sec:combinatorics-two-graphs}
In this section, we will present a short overview on $t$-designs, two-graphs, and quasi-symmetric designs which are used frequently in this paper along with some examples connecting them to systems of lines in orthogonal geometries. For a more complete overview see \cite{Cameron_Lint_1991,taylortwographs}.
In the most basic sense, combinatorial design theory concerns collections of subsets of a given set which have certain additional properties.  To that end, the following notation will be useful. If $\Omega$ is a finite set with $n$ elements, then for $k \leq n$, $\Omega^{\{k\}}$ is the collection of all $k$-element subsets of $\Omega$.
A \textbf{$t$-$(n,k,\lambda)$ design} is a pair $(\Omega,\mathcal B)$ where $\Omega$ is a set of $n$ points and $\mathcal B \subseteq\Omega^{\{k\}}$, where the elements of $\cB$ are called \textbf{blocks}, are such that any $t$ points are contained in exactly $\lambda$ blocks. If $r$ is the number of blocks in a $t$-design, with $t \geq 2$, that contain any given point, then $|\mathcal B|k=nr$ and $r(k-1)=(n-1)\lambda$. 

\begin{lem}[Fisher's inequality]\label{lem:fish}
    In a $2$-design if $k<n$ then $|\mathcal B|\geq n$. And in the case where $|\mathcal B|=n$ then $r=k$
\end{lem}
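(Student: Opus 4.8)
The plan is to prove Fisher's inequality via a standard linear algebra argument applied to the incidence matrix, and then extract the equality case.

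\textbf{Setup.} Let $(\Omega, \mathcal{B})$ be a $2$-$(n,k,\lambda)$ design with $k < n$, and write $b = |\mathcal{B}|$. Form the $b \times n$ incidence matrix $N$ over $\mathbb{Q}$ (or $\mathbb{R}$) whose rows are indexed by blocks and columns by points, with $N_{Bx} = 1$ if $x \in B$ and $0$ otherwise. First I would compute $N^\intercal N$: its $(x,x)$ entry counts the number of blocks through $x$, which is the replication number $r$, and its $(x,y)$ entry for $x \neq y$ counts the number of blocks through both $x$ and $y$, which is $\lambda$ by the $2$-design property. Hence $N^\intercal N = (r-\lambda)I_n + \lambda J_n$, where $J_n$ is the all-ones matrix.

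\textbf{Nonsingularity of $N^\intercal N$.} The next step is to show this matrix is nonsingular, which forces $n = \operatorname{rank}(N^\intercal N) \leq \operatorname{rank}(N) \leq b$, giving $b \geq n$. The eigenvalues of $(r-\lambda)I_n + \lambda J_n$ are $r - \lambda + \lambda n = r + \lambda(n-1) = r + r(k-1) = rk$ (using the design identity $r(k-1) = (n-1)\lambda$) with multiplicity $1$, and $r - \lambda$ with multiplicity $n-1$. Since $k < n$ we have $r > \lambda$ (indeed $r(k-1) = \lambda(n-1)$ with $k-1 < n-1$ forces $r > \lambda$ when $\lambda > 0$; and $\lambda = 0$ is excluded as it would make $\mathcal{B}$ empty contradicting $k<n$ having any block structure, or one handles it trivially), so $r - \lambda \neq 0$, and $rk \neq 0$ as well. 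Thus $\det(N^\intercal N) = rk(r-\lambda)^{n-1} \neq 0$.

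\textbf{The equality case.} Now suppose $b = n$, so $N$ is square. Then $N$ is nonsingular (since $N^\intercal N$ is), and $\det(N)^2 = \det(N^\intercal N) = rk(r-\lambda)^{n-1} > 0$. The goal is to show $r = k$. The key observation is that $N$ commutes with the all-ones vector structure: the vector $\mathbf{1}_n$ is an eigenvector of $N^\intercal N$ with eigenvalue $rk$, and more directly, $N\mathbf{1}_n = k\mathbf{1}_b$ (each block has $k$ points) while $N^\intercal \mathbf{1}_b = r\mathbf{1}_n$ (each point lies in $r$ blocks), where here $b = n$. Then $N N^\intercal \mathbf{1}_n = N(r\mathbf{1}_n)$... more carefully: since $N$ is invertible and square, from $N N^\intercal N = N((r-\lambda)I + \lambda J)$ I can cancel $N$ on the left (using invertibility) to get $N^\intercal N = (r-\lambda)I + \lambda J$, which we already knew; instead, the cleaner route is: $N N^\intercal$ and $N^\intercal N$ have the same eigenvalues (for square invertible $N$ they are conjugate: $N N^\intercal = N(N^\intercal N)N^{-1}$), so $N N^\intercal = (r-\lambda)I_n + \mu J_n$ must also have spectrum $\{rk, (r-\lambda)^{n-1}\}$; comparing the all-ones eigenvalue and using $N N^\intercal \mathbf{1}_n \cdot$ trace arguments, the trace of $N N^\intercal$ is $b k = nk$ while the trace of $N^\intercal N$ is $nr$, forcing $nk = nr$, hence $r = k$. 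I expect the main obstacle to be pinning down this last trace/spectrum comparison cleanly — making sure the argument that $NN^\intercal$ has the same constant-diagonal, constant-off-diagonal form is justified (it follows because $NN^\intercal$ is similar to $N^\intercal N$ hence has the same eigenvalues, and $\mathbf{1}$ is a common eigenvector, but one should verify the off-diagonal entries are constant, or simply bypass this by the trace equality $\operatorname{tr}(NN^\intercal) = \operatorname{tr}(N^\intercal N)$, which immediately gives $bk = nr$ and hence with $b = n$ that $k = r$).
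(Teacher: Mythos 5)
The paper states Fisher's inequality without proof---it is a classical result quoted for later use---so there is no in-paper argument to compare against; your proof is the standard linear-algebraic one and it is essentially correct. Two remarks. First, the equality case is over-engineered: the detour through the similarity of $NN^\intercal$ and $N^\intercal N$ and the comparison of their spectra is unnecessary, because the identity $\tr(NN^\intercal)=\tr(N^\intercal N)$, i.e.\ $|\mathcal B|k=nr$, is just the double count of incident point--block pairs and is already recorded in the paper in the sentence immediately preceding the lemma; with $|\mathcal B|=n$ it gives $r=k$ in one line, with no need for the nonsingularity of $N$ or any eigenvalue argument. Second, your treatment of $\lambda=0$ is the only soft spot: taken literally with the paper's definition, a $2$-$(n,k,0)$ design with $\mathcal B=\emptyset$ satisfies the hypotheses and violates the conclusion, so the lemma implicitly assumes $\lambda\ge 1$ (the universal convention for Fisher's inequality). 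Under that convention your deduction $r>\lambda$ from $r(k-1)=\lambda(n-1)$ and $k<n$ is sound, $\det(N^\intercal N)=rk(r-\lambda)^{n-1}\neq 0$ follows, and $n=\rk(N^\intercal N)\le\rk(N)\le|\mathcal B|$ gives the inequality as you claim.
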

A $t$-design is called a \textbf{quasi-symmetric $t$-$(n,k,\lambda; s_1, s_2)$ design} if the intersection of any two distinct blocks has $s_1$ or $s_2$ points, such that $s_1\leq s_2$. It can be shown that $s_2-s_1$ divides $k-s_1$ and $r-\lambda$. As an example, one could consider the lines in a plane over a finite field $\bF_q^2$. Any pair of points defines a unique line, meaning that the lines define a $2$-$(q^2,q,1)$ design.  Furthermore, each pair of lines either intersects in a unique point or  are parallel.  Thus, the lines form a quasi-symmetric $2$-$(q^2,q,1; 0, 1)$ design.

Another type of design, which is intimately related to equiangular lines in $\bR^d$, and which we will show is also related to equiangular lines in orthogonal geometries, is a two-graph.

\begin{defn}
    Let $\Omega$ be a set of size $n$, called the \textbf{point set} and $\mathcal B\se \Omega^{\{3\}}$, called the \textbf{coherent triples}. A pair $(\Omega,\mathcal B)$ is a \textbf{two-graph} if every $4$-element subset of $\Omega$ contains an even number of elements of $\mathcal B$ as subsets.

    A two-graph $(\Omega,\mathcal B)$ is called \textbf{regular} if every $2$ element subset of $\Omega$ is contained in an equal number of the coherent triples in $\mathcal B$. In this case $(\Omega,\mathcal B)$ is a $2$-$(n,3,\ell)$ design for some integer $\ell$.
\end{defn}

Let $(\Omega,\mathcal B)$ be a two-graph. Then a subset $\Gamma\se \Omega$ is \textbf{coherent} if $|\Gamma|\geq 3$ and every $3$-element subset of $\Gamma$ is a coherent triple, that is $\Gamma^{\{3\}}\se\mathcal B$. Likewise $\Gamma\se \Omega$ is \textbf{incoherent} if $|\Gamma|\leq 2$ or no $3$-element subset of $\Gamma$ is a coherent triple, that is $\Gamma^{\{3\}}\cap\mathcal B=0$. If $(\Omega,\mathcal B)$ is a regular two-graph, and therefore a $2$-$(n,3,\ell)$ design, then every coherent triple is contained in $m$ coherent quadruples where $|\Omega|=3\ell-2m$ and in which case we will refer to $(\Omega,\mathcal B)$ as a \textbf{regular two-graph with parameters $(n,\ell,m)$}.

Given an undirected graph $G=(V,E)$ where $n=|V|$ we can construct a two-graph $(V,\mathcal B_G)$ where $\mathcal B_G$ is the set of all triples of vertices where the number of edges in the induced subgraph is odd.
Two graphs $G_1$ and $G_2$, with the same vertex set $V$ are called \textbf{switching equivalent} if there is a set of vertices $X\se V$ where switching all edges and non-edges of $G_1$ between $X$ and the compliment $G_1-X$ result in $G_2$. Graphs that are switching equivalent give rise to equal two-graphs. In fact any two-graph $(\Omega,\mathcal B)$, represents a class of switching equivalent graphs since in any induced subgraph on $3$ vertices, switching changes and even number of edges.

We will make use of the \textbf{Seidel adjacency matrix} $S\in\Z^{n\times n}$ to connect two-graphs to equiangular lines. Label the vertices in $V$ as $[n]$, and then define $S$ as follows
\[S_{ij}:=\begin{cases}
    -1 & \text{If }i\text{ and }j\text{ are adjacent}\\
    0 & \text{If }i=j\\
    1 & \text{If }i\text{ and }j\text{ are not adjacent}
\end{cases}\]
The spectrum of a Seidel adjacency matrix is invariant under switching equivalence, and it is the spectrum which is critical in the connection to equiangular lines. So, we can sensibly define a non-unique Seidel adjacency matrix for any two-graph to be the Seidel adjacency matrix of any graph that induces the two-graph. An equivalent condition for a regular two-graph is that the spectrum of any of its Seidel adjacency matrices has exactly two eigenvalues.
Furthermore let $G_x$ be the unique graph which is switching equivalent to $G$ and where $x$ is isolated, and removed. Then a non-trivial, non-complete two-graph is regular if and only if there exists some $x\in V$ where $G_x$ is strongly regular with parameters $k=2\mu$. In this case the parameters of the regular two-graph and the strongly regular graph align, in that $k=\ell$ and $\lambda=m$.

\begin{defn}
    A graph $G=(V,E)$ is a \textbf{strongly regular graph (SRG)} with parameters $(v,k,\lambda,\mu)$ if it is not complete nor empty and has $n=|V|$, $k$ the degree or valency of each vertex, $\lambda$ the number of shared neighbors for any pair of adjacency vertices, and $\mu$ the number of shared neighbors for any pair of non-adjacency vertices.

    A graph $G=(V,E)$ is a \textbf{$p$-modular strongly regular graph (SRG$_p$)} with parameters $(v,k,\lambda,\mu)$ if it is not complete nor empty and has $n=|V|$, $k$ is equivalent to the degree of each vertex modulo $p$, $\lambda$ is equivalent to the number of shared neighbors for any pair of adjacency vertices modulo $p$, and $\mu$ is equivalent to the number of shared neighbors for any pair of non-adjacency vertices modulo $p$.
\end{defn}
We use SRGs and SRG$_p$s only in the proof of \cref{lem:etfiffregulartwograph}, and so we will not give an in-depth overview. For an overview of SRGs and their connections two-graphs we point to \cite{Cameron_Lint_1991} and for an overview of SRG$_p$s we point to \cite{greaves_frames_2021-1}.

Let $\Phi$ be an $(a,b)$-equiangular system of $n$ lines in an orthogonal geometry $W$ of characteristic not equal to $2$, where $b\neq 0$. 
In this case the angle between any two vectors is either $\beta$ or $-\beta$ where $\beta^2=b$. This suggests we can consider $\frac{1}{\beta}(\Phi^\dagger\Phi-aI)$ to be the image of a Seidel adjacency matrix for a graph $G=(V,E)$ under the map $\pi:\Z\ra \F$, where the vertex set $V=\Phi$ are the vectors, and there is a edge between $\phi_i$ and $\phi_j$ if $\ip{\phi_i}{\phi_j}=-\beta$. 
This means that $\Phi$ gives rise to a well-defined two-graph $(\Phi,\mathcal C)$ where $\mathcal C$ is the set of all triples of vectors $\{\phi_j,\phi_k,\phi_\ell\}$ where $\Delta(\phi_j,\phi_k,\phi_\ell)=\ip{\phi_j}{\phi_k}\ip{\phi_k}{\phi_\ell}\ip{\phi_\ell}{\phi_j}=-\beta^3$. This conditions exactly corresponds to the triple $\{\phi_j,\phi_k,\phi_\ell\}$ having an odd number of edges in the graph corresponding to $\Phi$. The ``correct'' choice of $\beta$ is not immediately obvious as finite fields are not ordered, and we will explore the question of choosing a particular $\beta$ in \cref{ssec:incoherentreality}. Just as for graphs, systems of equiangular lines which are switching equivalent, give rise to equal two-graphs: \cref{cor:TP_ETF} tells us that the parity of negative inner products in any triple product uniquely determines switching equivalence for ETFs in $\bR^d$.

Likewise in some cases a two-graph can be used to construct a system of equiangular lines over an orthogonal geometry of dimension equal to the multiplicity of any non-zero eigenvalue.
It is however not always the case that the two-graphs constructed from systems of equiangular lines over orthogonal geometries themselves give rise to switching equivalent systems of lines or even lines in the same dimensional space, and so we caution the reader: for the preservation of sensible structure, restrictions on $\beta$ and the characteristics must be made.

\begin{ex}\label{ex:twonotswitch}
    Consider $\F_{11}^2$ in the real model, with the standard dot product.
    \[\Phi=\begin{bmatrix}
        0 & 3 & 8\\
        1 & 5 & 5
    \end{bmatrix}\;\;\;\; \Phi^\dagger\Phi = \begin{bmatrix}
        1 & 5 & 5\\
        5 & 1 & 5\\
        5 & 5 & 1
    \end{bmatrix}\]
    $\Phi$ is a $(1,3,7)$-ETF. By picking $\beta=5$ we can see that the resulting two-graph is the trivial two-graph $(\{1,2,3\},\{\})$, a graph that induces this two-graph is the trivial graph on $3$ vertices, which has Seidel adjacency matrix $J-I\in\Z^{3\times 3}$ with eigenvalues $2$ with multiplicity $1$ and $-1$ with multiplicity $2$.
    This means we can construct $-I-J+I=-J$ which has $-1$ on the diagonal, and $-1$ off the diagonal. The matrix $-J$ is rank $1$ meaning this gives rise to a $(-1,1)$-equiangular frame for a $1$ dimensional orthogonal geometry. If instead we used the eigenvalue of $2$ we would get $2I-J+I=3I-J$
    which would be a rank $2$ matrix and therefore the Gram matrix of a $(2,1)$-equiangular frame for a $2$ dimensional orthogonal geometry, and therefore an ETF. Picking $\beta=-5\equiv 6$ would result in a similar behavior.
\end{ex}

\section{Sufficient Conditions for Tightness} 
\label{sec:beigntight}
In this section we will explore sufficient conditions for collections of lines to be tight.

In \cref{sec:frames} we defined tightness to be a property of frames. However, as we will see in this section, many equivalent notions of tightness are enough to guarantee that a collection of vectors has a non-isotropic image, and hence would be a tight frame for their span.

\subsection{\texorpdfstring{\cref{lem:tfaetight1}}{Lemma 2.4} Revisited}
\begin{lem}
    \label{lem:tfaetight2}
    Let $c\neq 0$ and $\Phi=(\phi_j)_{j=1}^n$ be a collection of vectors in a non-isotropic space $V$, 
    then the following are equivalent and imply that $\Phi:\F^n\ra \image \Phi$ is a 
    $c$-tight frame for $\image \Phi$.
    \begin{itemize}
        \setlength\itemsep{0em}
        \item[(i)] $\Phi\Phi^\dagger=cI$ on $\image\Phi$
        \item[(ii)] $\Phi\Phi^\dagger\Phi=c\Phi$
    \end{itemize}
\end{lem}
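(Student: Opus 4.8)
The plan is to treat the equivalence (i)$\,\Leftrightarrow\,$(ii) as a formality and to put the actual work into showing that either condition forces $\image\Phi$ to be a non-isotropic subspace of $V$; once that is known, the $c$-tightness claim is immediate.

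First I would verify (i)$\,\Leftrightarrow\,$(ii) directly. For (i)$\,\Rightarrow\,$(ii): since $\Phi y\in\image\Phi$ for every $y\in\F^n$, applying the hypothesis $\Phi\Phi^\dagger w=cw$ with $w=\Phi y$ gives $\Phi\Phi^\dagger\Phi = c\Phi$ as maps $\F^n\to V$. For (ii)$\,\Rightarrow\,$(i): any $w\in\image\Phi$ can be written $w=\Phi y$, so $\Phi\Phi^\dagger w = \Phi\Phi^\dagger\Phi y = c\Phi y = cw$. Neither direction uses non-isotropy, and neither uses $c\neq 0$.

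Next, starting from (ii) and using $c\neq 0$, I would rewrite the hypothesis as
\[
\Phi\bigl(I - c^{-1}\Phi^\dagger\Phi\bigr) = \Phi - c^{-1}\Phi\Phi^\dagger\Phi = 0 .
\]
This is enough to deduce $\ker\Phi = \ker(\Phi^\dagger\Phi)$: the inclusion $\ker\Phi\subseteq\ker(\Phi^\dagger\Phi)$ is trivial, and conversely if $\Phi^\dagger\Phi x = 0$ then
\[
\Phi x = \Phi\bigl(c^{-1}\Phi^\dagger\Phi\bigr)x + \Phi\bigl(I - c^{-1}\Phi^\dagger\Phi\bigr)x = 0 + 0 = 0 .
\]
Hence $\rk(\Phi^\dagger\Phi) = \rk(\Phi)$, and \cref{lem:nondegiff} yields that $\image\Phi$ is non-isotropic, i.e.\ $\Phi:\F^n\to\image\Phi$ is a frame for its span.

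Finally I would observe that, once $\image\Phi$ is non-isotropic, the scalar product restricts to a non-degenerate form there, $V = \image\Phi \oplus (\image\Phi)^\perp$, and the adjoint of $\Phi:\F^n\to\image\Phi$ computed inside $\image\Phi$ is exactly the restriction of $\Phi^\dagger$ to $\image\Phi$ (because that form is the restriction of the one on $V$). Thus condition (i)---$\Phi\Phi^\dagger = cI$ on $\image\Phi$---says precisely that the frame operator of $\Phi$, viewed as a frame for $\image\Phi$, equals $cI$, so $\Phi$ is a $c$-tight frame for $\image\Phi$. I do not expect a genuine obstacle here; the only point needing a little care is this last identification of the two adjoints, and the role of the hypothesis $c\neq 0$ is simply that it is needed to invert $c$ (the conclusion genuinely fails when $c=0$, e.g.\ for a single isotropic vector).
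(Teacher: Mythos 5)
Your proposal is correct and follows essentially the same route as the paper: both deduce $\rk(\Phi^\dagger\Phi)=\rk(\Phi)$ from (ii) using $c\neq 0$ (you via a kernel computation, the paper via the rank inequality $\rk(c\Phi)=\rk(\Phi\Phi^\dagger\Phi)\leq\rk(\Phi^\dagger\Phi)$), then invoke \cref{lem:nondegiff} to get non-isotropy of $\image\Phi$ and conclude tightness. The only cosmetic differences are that you verify (ii)$\Rightarrow$(i) by hand rather than citing \cref{lem:tfaetight1}, and you spell out the (routine but legitimate) identification of the adjoint of $\Phi:\F^n\to\image\Phi$ with the restriction of $\Phi^\dagger$, which the paper leaves implicit.
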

\begin{proof}
    First we will show (i) implies (ii) which in turn implies $\Phi:\F^n\ra \image \Phi$ is a $c$-tight frame.
    Notice that (i) immediately implies (ii) and by counting ranks we can conclude from
    $\Phi\Phi^\dagger\Phi=c\Phi$ where $c\neq 0$ that
    \[\rank(c\Phi)=\rank(\Phi\Phi^\dagger\Phi)\leq 
    \min(\rk(\Phi), \rk(\Phi^\dagger\Phi))= \rk(\Phi^\dagger\Phi),\]
    where the last equality follows from $\rk(\Phi)\geq \rk(\Phi^\dagger\Phi)$.
    So $\rk(\Phi)=\rk(\Phi^\dagger\Phi)$. By \cref{lem:nondegiff} we know 
    that $\image\Phi$ is non-isotropic. This shows $\Phi:\F^n\ra \image \Phi$ is a frame. 
    This also implies (i) from (ii) by \cref{lem:tfaetight1}.
\end{proof}
We note that this result does not guarantee that the $\discr(\image \Phi)$ agrees with $\discr(V)$, and in general it will not. An explicit example is shown in \cref{ex:discrisweird}.
\begin{defn}
    Let $M=[M_{ij}]_{i,j\in[n]}$ be a square matrix, and denote the columns by $m_1,\dots,m_n$. 
    Select a subset of the columns $(m_j)_{i\in K}$ that forms a basis for $\image M$. Define a \textbf{basic submatrix} of $M$ to be $M_b=[M_{ij}]_{i,j\in K}$
\end{defn}
In case O, \cite{greaves_frames_2021}, showed that the $\discr(\im\Phi)=(\det((\Phi^\dagger\Phi)_b))\F_q^{\times 2}$ for any basic submatrix $(\Phi^\dagger\Phi)_b$ of the Gram matrix.

\begin{ex}
    \label{ex:discrisweird}
    let $V=\F_5^3$ be an orthogonal geometry in the real model where $\discr(\F_5^3)$ is trivial
    and consider the system of lines
    \[\Phi = \begin{bmatrix}
        0 & 2 & 3\\
        4 & 2 & 2\\
        4 & 2 & 2
    \end{bmatrix}\;\;\;\;\;\;\;\;\Phi^\dagger\Phi = \begin{bmatrix}
        2 & 1 & 1\\
        1 & 2 & 4\\
        1 & 4 & 2
    \end{bmatrix},
    \]
    which is a $(2, 1)$-equiangular system of lines. We can also see that $\dim(\image \Phi)=2$. From \cref{lem:tfaetight2} and because $3\Phi=\Phi\Phi^\dagger\Phi$ we know that $\Phi$ is an $(2, 1,3)$-ETF for $\image\Phi$. Looking at a basic submatrix formed from the first two columns of $\Phi^\dagger\Phi$, whose determinant is $3$, we can see that the discriminant of $\Phi$ is not a square. Now consider the orthogonal geometry $\F_5^2$ with a scalar product having Gram matrix $\diag(1,3)$ and consider the frame for $\F_5^2$,
    \[\Psi= \begin{bmatrix}
        0 & 2 & 3\\
        2 & 1 & 1
    \end{bmatrix}\]
    which is also a $(2, 1,3)$-ETF where $\Phi^\dagger\Phi=\Psi^\dagger\Psi$.
\end{ex}

We note that case (iii) from \cref{lem:tfaetight1} does not imply
that $\image\Phi$ is non-isotropic in general, as $\rk(\Phi)$ and $\rk(\Phi^\dagger\Phi)$ may not 
agree. However we provide a few situations where (iii) does imply $\image\Phi$ is non-isotropic. First we give conditions on square matrices which make them the Gram matrices for some frame.

\begin{prop} (Theorem 3.13 \cite{greaves_frames_2021})
    \label{prop:whenGisETFUnitary}
    Let $G$ be an $n\times n$ matrix with entires in $\F_{q^2}$. Then $G$ is the Gram matrix of some frame for a unitary geometry $\F_{q^2}^d$ if and only if
    \begin{itemize}
    \setlength\itemsep{0em}
     \item[(i)] $G=G^*$
     \item[(ii)] $\rk(G)=d$
    \end{itemize}
    Additionally $G$ is the Gram matrix for a $c$-tight frame with $c\in \F_{q^2}$ if and only if
    \begin{itemize}
        \item[(iii)] $G^2=cG$
    \end{itemize}
    Furthermore $G$ is the Gram matrix of some $(a,b,c)$-ETF for $a,b\in\F_{q^2}$ if and only if
    \begin{itemize}
        \setlength\itemsep{0em}
     \item[(iv)] $G_{ii}=a$ for all $i\in[n]$
     \item[(v)] $G_{ij}G_{ji}=b$ for all $i\neq j$ in $[n]$
    \end{itemize}
 \end{prop}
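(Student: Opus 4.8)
The plan is to prove the three biconditionals in the order stated, each building on the previous one and drawing on \cref{lem:nondegiff}, \cref{lem:tfaetight1}, and \cref{lem:scalarprods-diag}. Throughout I would work in the complex model, so that for a $d\times n$ synthesis matrix $\Phi$ the analysis operator is the conjugate transpose, $\Phi^\dagger=\Phi^*$, and the Gram matrix is $\Phi^*\Phi$. The forward implications are all short; essentially all the work is in the backward direction of the first claim.

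For the forward directions: if $G=\Phi^\dagger\Phi$ for a frame $\Phi$ for the unitary geometry $\F_{q^2}^d$, then $G=\Phi^*\Phi$ satisfies $G=G^*$ immediately, and \cref{lem:nondegiff} applied to the non-isotropic space $\F_{q^2}^d$ gives $\rk(G)=\rk(\Phi^\dagger\Phi)=\rk(\Phi)=d$. If in addition $\Phi$ is $c$-tight, then $G^2=(\Phi^\dagger\Phi)^2=c\,\Phi^\dagger\Phi=cG$ by \cref{lem:tfaetight1}(iii). And conditions (iv), (v) are just \cref{def:equisys}(i), (ii) transcribed into Gram entries, since $\langle\phi_i,\phi_i\rangle=G_{ii}$ and $\langle\phi_i,\phi_j\rangle\langle\phi_j,\phi_i\rangle=G_{ij}G_{ji}$.

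The backward direction of (i)--(ii) is the core of the argument and the step I expect to be the main obstacle. Given $G=G^*$ of rank $d$, I would regard $G$ as the matrix of the Hermitian form $\langle x,y\rangle=x^*Gy$ on $\F_{q^2}^n$, observe that its radical is exactly $\ker G$ (of dimension $n-d$), pick a complement $W$ to $\ker G$, and apply the Case-U part of \cref{lem:scalarprods-diag} to the non-degenerate restriction of the form to $W$ to obtain a basis of $W$ in which its Gram matrix is $I_d$. This is where the structure theory of Hermitian forms over $\F_{q^2}$ genuinely enters: \cref{lem:scalarprods-diag} is stated only for non-degenerate forms, so one must first split off the radical, and the normalization of the diagonal to all $1$'s uses the Case-U fact that every element of $\F_0^\times$ is a norm. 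Combining this basis with any basis of $\ker G$ yields an invertible $P$ with $G=P^*\begin{bmatrix}I_d&0\\0&0\end{bmatrix}P$; writing $\begin{bmatrix}I_d&0\\0&0\end{bmatrix}=J^*J$ with $J=[\,I_d\mid 0\,]\in\F_{q^2}^{d\times n}$, the matrix $\Phi:=JP$ (the first $d$ rows of $P$) has rank $d$, hence spans $\F_{q^2}^d$ and is a frame for it, and $\Phi^\dagger\Phi=\Phi^*\Phi=P^*J^*JP=G$.

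Finally, for the backward directions of (iii) and of (iv)--(v): taking the frame $\Phi$ just constructed, the hypothesis $G^2=cG$ reads $(\Phi^\dagger\Phi)^2=c\,\Phi^\dagger\Phi$, so \cref{lem:tfaetight1} forces $\Phi\Phi^\dagger=cI$, i.e.\ $\Phi$ is $c$-tight; and $G_{ii}=a$, $G_{ij}G_{ji}=b$ say exactly that $\Phi$ is an $(a,b)$-equiangular system, so $\Phi$ is an $(a,b,c)$-ETF. The only genuinely nontrivial point, as noted, is the Hermitian normal-form / radical-splitting step; the remainder is bookkeeping with adjoints in the complex model together with citations of \cref{lem:nondegiff} and \cref{lem:tfaetight1}.
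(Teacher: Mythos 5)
The paper does not prove this proposition; it is imported verbatim as Theorem 3.13 of \cite{greaves_frames_2021}, so there is no in-paper argument to compare against. Your proposal is a correct, self-contained reconstruction of the standard proof: the forward implications are indeed immediate from \cref{lem:nondegiff} and \cref{lem:tfaetight1}, and the essential content is exactly where you locate it, namely factoring a Hermitian $G$ of rank $d$ as $\Phi^*\Phi$ by splitting off the radical $\ker G$ and invoking the Case-U normalization of \cref{lem:scalarprods-diag} (which in turn rests on the remark that $\F_0^\times=\F^{\times 2}$ in Case U) to write $G=P^*\bigl[\begin{smallmatrix}I_d&0\\0&0\end{smallmatrix}\bigr]P$. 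The only cosmetic point worth a sentence in a written-up version: the statement concerns an arbitrary unitary geometry on $\F_{q^2}^d$, so your reduction to the complex model should be flagged as without loss of generality, which is justified precisely because \cref{lem:scalarprods-diag} shows all non-degenerate Hermitian forms on $\F_{q^2}^d$ are equivalent to the standard one; with that noted, both directions of all three biconditionals go through as you describe.
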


\begin{prop} (Theorem 3.15 \cite{greaves_frames_2021}, Proposition 2.12 \cite{greaves_frames_2021-1})
    \label{prop:whenGisETF}
    Let $G$ be an $n\times n$ matrix with entires in $\F_q$ for some odd prime power $q$. Then $G$ is the Gram matrix of some frame for an orthogonal geometry $\F_{q}^d$ if and only if
    \begin{itemize}
        \setlength\itemsep{0em}
     \item[(i)] $G=G^\intercal$
     \item[(ii)] $\rk(G)=d$
     \item[(iii)] $(\det (G_b))=\discr(\F_q^d)$
    \end{itemize}
    Additionally $G$ is the Gram matrix for a $c$-tight frame with $c\in \F_{q}$ if and only if
    \begin{itemize}
        \item[(iv)] $G^2=cG$
       \end{itemize}
       Furthermore $G$ is the Gram matrix of some $(a,b,c)$-ETF for $a,b\in\F_q$ if and only if
    \begin{itemize}
        \setlength\itemsep{0em}
     \item[(v)] $G_{ii}=a$ for all $i\in[n]$
     \item[(vi)] $G_{ij}^2=b$ for all $i\neq j$ in $[n]$
    \end{itemize}
\end{prop}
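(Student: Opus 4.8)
The plan is to move back and forth between the matrix $G$ and a synthesis operator $\Phi$ via the identity $\Phi^\dagger\Phi=\Phi^\intercal J\Phi$, where, after choosing coordinates as in \cref{lem:scalarprods-diag}, $J=\diag(1,\dots,1,\delta)$ is the Gram matrix of the form on the target $d$-dimensional orthogonal geometry and $\delta\F_q^{\times 2}=\discr(\F_q^d)$. The forward direction is essentially bookkeeping: if $\Phi$ is a frame for $\F_q^d$ with $G=\Phi^\dagger\Phi$, then $G^\intercal=G$ since the Gram matrix of a Hermitian scalar product satisfies $M=M^*$ and the involution is trivial; $\rk(G)=\rk(\Phi^\dagger\Phi)=\rk(\Phi)=d$ by \cref{lem:kerwhenframe}; and $\det(G_b)\F_q^{\times 2}=\discr(\im\Phi)=\discr(\F_q^d)$ by the case-O identification of the discriminant with the square class of any basic submatrix recalled just before \cref{ex:discrisweird}. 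This gives (i)--(iii). For the refinements, $G^2=cG$ is exactly condition (iii) of \cref{lem:tfaetight1}, and comparing the diagonal and off-diagonal entries of $G=[\ip{\phi_i}{\phi_j}]$ against \cref{def:equisys} gives $G_{ii}=\ip{\phi_i}{\phi_i}=a$ and $G_{ij}G_{ji}=\ip{\phi_i}{\phi_j}\ip{\phi_j}{\phi_i}=b$, with $G_{ij}G_{ji}=G_{ij}^2$ by symmetry.

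The content is the converse. Given $G$ symmetric of rank $d$, I would first diagonalize the associated symmetric bilinear form, which is possible because $q$ is odd: there is an invertible $P$ with $P^\intercal G P=\diag(b_1,\dots,b_d,0,\dots,0)$ and all $b_i\neq 0$. Writing $D=\diag(b_1,\dots,b_d)$ and letting $\Psi\in\F_q^{d\times n}$ be the first $d$ rows of $P^{-1}$, one gets $G=\Psi^\intercal D\Psi$ with $\rk(\Psi)=d$. Viewing $\Psi$ as the synthesis operator of a frame for the orthogonal geometry $(\F_q^d,D)$, its Gram matrix is $\Psi^\dagger\Psi=\Psi^\intercal D\Psi=G$, and this geometry has discriminant $\det D\cdot\F_q^{\times 2}$. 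By condition (iii) this equals $\discr(\F_q^d)$, so by \cref{lem:scalarprods-diag} together with the fact (recalled in the excerpt) that non-degenerate symmetric forms of the same dimension over $\F_q$ are equivalent precisely when their discriminants agree, there is an invertible $R$ with $R^\intercal J R=D$. Setting $\Phi=R\Psi$ (still of rank $d$, hence a frame for $(\F_q^d,J)$) yields $\Phi^\dagger\Phi=\Phi^\intercal J\Phi=\Psi^\intercal R^\intercal J R\Psi=\Psi^\intercal D\Psi=G$. Then, if in addition $G^2=cG$, we get $(\Phi^\dagger\Phi)^2=c\,\Phi^\dagger\Phi$, so $\Phi$ is $c$-tight by \cref{lem:tfaetight1}; and if $G_{ii}=a$ and $G_{ij}^2=b$, reading off the entries of $\Phi^\dagger\Phi=G$ shows $\Phi$ is $(a,b)$-equiangular, hence an $(a,b,c)$-ETF.

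The step I expect to be the crux is verifying that condition (iii) is well posed, i.e., that $\det(G_b)\F_q^{\times 2}$ does not depend on the choice of basic submatrix (so that it can meaningfully be equated with $\discr(\F_q^d)$), and simultaneously that it equals $\det D\cdot\F_q^{\times 2}$. Both follow from one small linear-algebra fact about symmetric matrices: if $G$ is symmetric of rank $d$ and the $d$ columns indexed by a set $K$ are linearly independent, then the principal submatrix $G_{K,K}$ is nonsingular. To see this, partition $G$ so that $K$ comes first, with first diagonal block $A=G_{K,K}$ and first column block having lower part $B^\intercal$; since $G$ has rank $d$ the remaining columns are combinations of the first block, say $B=AX$, so a nonzero $v$ with $Av=0$ would also satisfy $B^\intercal v=X^\intercal A v=0$, contradicting independence of the first column block. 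Applying the same block identity to $G=\Psi^\intercal D\Psi$ gives $G_{K,K}=\Psi_K^\intercal D\Psi_K$ with $\Psi_K$ the $d\times d$ column-submatrix of $\Psi$, which is therefore invertible, so $\det(G_b)=\det(\Psi_K)^2\det D$ and every basic submatrix determinant lies in the square class $\det D\cdot\F_q^{\times 2}$.

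With this in hand the remaining steps are routine applications of \cref{lem:tfaetight1} and the classification of non-degenerate symmetric forms over finite fields. I would also remark that the unitary analogue \cref{prop:whenGisETFUnitary} is the easier case in which the discriminant is automatically trivial, so that condition (iii) disappears and the same argument goes through with $J=I$ and the factorization $G=\Psi^*\Psi$ replacing $G=\Psi^\intercal D\Psi$.
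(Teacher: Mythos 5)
The paper does not actually prove this proposition; it is imported by citation from Theorem 3.15 of \cite{greaves_frames_2021} and Proposition 2.12 of \cite{greaves_frames_2021-1}, so there is no in-text argument to compare against. Your proof is correct and self-contained, and it follows the natural route. The forward direction is the bookkeeping you describe: symmetry of the Gram matrix from (HS2) with trivial involution, $\rk(G)=\rk(\Phi)$ from \cref{lem:kerwhenframe}, the discriminant identity recalled before \cref{ex:discrisweird}, and then \cref{lem:tfaetight1} and entrywise reading for (iv)--(vi). For the converse, diagonalizing $G$ as $P^\intercal GP=\diag(b_1,\dots,b_d,0,\dots,0)$, extracting $\Psi$ with $G=\Psi^\intercal D\Psi$ so that $\Psi$ is a frame for the geometry $(\F_q^d,D)$, and then invoking the classification of non-degenerate symmetric forms over $\F_q$ by discriminant to produce an isometry $R$ with $R^\intercal JR=D$ and set $\Phi=R\Psi$, is exactly what is needed; since $R$ is invertible, $\Phi$ still has rank $d$ and hence is a frame, with $\Phi^\intercal J\Phi=G$. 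The step you correctly identify as the crux --- that $\det(G_b)\F_q^{\times 2}$ is independent of the choice of basic submatrix and equals $\det D\cdot\F_q^{\times 2}$ --- is handled soundly: the identity $G_{K,K}=\Psi_K^\intercal D\Psi_K$ with $\Psi_K$ invertible gives $\det(G_b)=\det(\Psi_K)^2\det D$, and your block-partition argument (using $B=AX$ and symmetry of $G$ to get $B^\intercal v=X^\intercal Av$) independently establishes nonsingularity of any such principal submatrix. The only cosmetic caveat is that the paper's stated definition of equivalence of scalar products reads $M=A^{-1}NA$ rather than the congruence $M=A^{*}NA$ that you (correctly) use; the congruence version is the standard one and is what the discriminant classifies, so your argument is the right one.
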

We also give a more direct instance where (iii) from \cref{lem:tfaetight1} implies
that $\image\Phi$ is non-isotropic.
The proof uses $CR$-decompositions which in some cases can fill the gap from the lack of spectral theory, and positive definiteness, in the positive characteristic setting.
\begin{prop}
    Let $M\in \F^{n\times m}$ be any matrix over any field, with $\rk(M)=r$.
    Then there exist matrices $C\in \F^{n\times r}$ and $R\in \F^{r\times m}$ such that $M = CR$ and
    $\rk(C)=\rk(R)=\rk(M)=r$.
\end{prop}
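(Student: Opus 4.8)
The plan is to construct the $CR$-decomposition explicitly from a choice of $r$ linearly independent columns of $M$, which exist since $\rk(M)=r$. First I would let $K=\{k_1<\dots<k_r\}\subseteq[m]$ index a maximal set of linearly independent columns of $M$, and set $C\in\F^{n\times r}$ to be the submatrix of $M$ consisting of exactly those columns. By construction $C$ has rank $r$ (its columns are independent) and $\image C = \image M$, since every column of $M$ lies in the span of the chosen columns.

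Next I would define $R\in\F^{r\times m}$ to be the unique matrix of coefficients expressing each column of $M$ in terms of the chosen basis: for each $j\in[m]$, the $j$th column $m_j$ of $M$ can be written uniquely as $m_j = C x_j$ for some $x_j\in\F^r$ (uniqueness because $C$ has full column rank, hence trivial kernel), and I set the $j$th column of $R$ to be $x_j$. Then $M=CR$ by definition. It remains to check $\rk(R)=r$: on one hand $\rk(R)\le r$ trivially; on the other hand, for $j=k_i\in K$ the column $m_{k_i}$ is itself the $i$th column of $C$, so $x_{k_i}=e_i$, the $i$th standard basis vector of $\F^r$. Hence $R$ contains the $r\times r$ identity as a submatrix (the columns indexed by $K$), so $\rk(R)\ge r$, giving $\rk(R)=r$. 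Finally $\rk(C)=r$ as noted, and $\rk(M)=r$ by hypothesis, so all three ranks agree.

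I do not anticipate a genuine obstacle here: the statement is the standard column-space/row-space factorization, valid over any field since it uses only linear independence and basis arguments, no inner product, norm, or ordering. The only mild subtlety worth stating carefully is the uniqueness of the coefficient vectors $x_j$, which hinges on $C$ having trivial kernel — this follows from the columns of $C$ being linearly independent — and the observation that the columns of $R$ indexed by $K$ form an identity block, which simultaneously pins down $\rk(R)=r$ and shows the factorization is consistent. One could alternatively invoke reduced row echelon form: $R$ is essentially the nonzero rows of the RREF of $M$ and $C$ collects the pivot columns, but the direct basis construction above is cleaner and self-contained.
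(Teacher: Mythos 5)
Your proposal is correct and follows essentially the same construction as the paper: take $C$ to be a choice of $r$ linearly independent columns of $M$ and let $R$ be the coefficient matrix expressing every column of $M$ in that basis. The only (harmless) difference is in the final step showing $\rk(R)=r$: you observe that $R$ contains an $r\times r$ identity submatrix in the columns indexed by $K$, whereas the paper argues that $\rk(R)<r$ would force $\rk(CR)<r$, contradicting $\rk(M)=r$.
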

\begin{proof}
    Let $C$ be a matrix with $r$ linearly independent columns of $M$. This means $C\in \F^{n\times r}$ and has rank $r$.
    Then because every column of $M$ is a linear combination of the columns of $C$ we can construct the matrix 
    $R\in \F^{r\times m}$ where the $i$th column of $R$ is the coefficients in the linear combination of 
    the columns of $C$ to make the $i$th column of $M$.
    This gives us $M=CR$. Finally notice that $\rk(R)= r$, because if it were less than the product $CR$ would not have rank $r$.
\end{proof}

This decomposition gives us some very nice properties. It is important to note that this decomposition is not unique.
Because $C:\F^r\ra \F^n$ is full rank it is injective and so has a left inverse $C^{-}$ where $C^{-}C=I_{r}$ and because 
$R:\F^n\ra \F^r$ is full rank it is surjective, and so has a right inverse $R^{-}$ where $RR^{-}=I_r$. 

\begin{thm}
    \label{thm:buildingcharacter}
    Let $\Phi=(\phi_j)_{j=1}^n$ be an equal norm collection of vectors in a non-isotropic space $V$, with $\ip{\phi_j}{\phi_j}=a$ for all $j$, such that $\frac{na}{d}\neq 0$ where $d:=\dim(\image\Phi)$.
    If $\Char\F>d$ and $(\Phi^\dagger\Phi)^2=\frac{na}{d}\Phi^\dagger\Phi$
    Then $\Phi$ is an $\frac{na}{d}$-tight frame for $\image\Phi$.
\end{thm}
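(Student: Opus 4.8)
The plan is to reduce the statement to a rank computation: I will show that $\rk(\Phi^\dagger\Phi)=\rk(\Phi)=d$. Once this is established, \cref{lem:nondegiff} tells us that $\image\Phi$ is non-isotropic, so that $\Phi\colon\F^n\to\image\Phi$ is genuinely a frame for its span, and then \cref{lem:tfaetight1} (applied to this frame) promotes the hypothesis $(\Phi^\dagger\Phi)^2=\tfrac{na}{d}\Phi^\dagger\Phi$ to $\Phi\Phi^\dagger=\tfrac{na}{d}I$ on $\image\Phi$, which is exactly the assertion that $\Phi$ is an $\tfrac{na}{d}$-tight frame for $\image\Phi$. So the entire content is the rank equality.

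To get that equality, write $G:=\Phi^\dagger\Phi$ and $c:=\tfrac{na}{d}$; by hypothesis $c\neq 0$, and since $\Char\F>d\geq 1$ the integer $d$ is invertible in $\F$, so also $na=d\cdot c\neq 0$. From $G^2=cG$ we get $G(G-cI)=0$, and because $c$ is invertible the decomposition $v=\tfrac1c Gv+\bigl(v-\tfrac1c Gv\bigr)$ exhibits $\F^n=\ker(G-cI)\oplus\ker G$ with both summands $G$-invariant; hence $G$ is diagonalizable with eigenvalues in $\{0,c\}$, and $r:=\rk(G)=\dim\ker(G-cI)$ equals the algebraic multiplicity of the eigenvalue $c$. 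Taking traces along this decomposition gives $\tr(G)=rc$ (here $r$ is read as an element of $\F$). On the other hand, the diagonal entries of the Gram matrix $G$ are $\ip{\phi_j}{\phi_j}=a$, so $\tr(G)=na$. Therefore $rc=na$, i.e.\ $(r-d)\tfrac{na}{d}=0$ in $\F$; since $\tfrac{na}{d}\neq 0$, this forces $r\equiv d$ in $\F$.

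The step I expect to be the crux is upgrading the congruence $r\equiv d$ in $\F$ to the genuine integer equality $r=d$, and this is precisely where $\Char\F>d$ is used. As a product, $r=\rk(G)\le\rk(\Phi)=d$, and $r\geq 1$ because $G\neq 0$ (its trace $na$ is nonzero). So if $\Char\F=p>0$ we have integers with $1\le r\le d<p$ and $r\equiv d\pmod p$, which forces $r=d$; if $\Char\F=0$ the congruence already is the equality. The only delicacy is the bookkeeping around reading $r$ simultaneously as an integer (for the rank inequality $r\le d$ and for comparison with $\Char\F$) and as an element of $\F$ (for the trace identity), together with the observation $G\neq 0$ that rules out the degenerate case $r=0$. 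With $r=d$ in hand, $\rk(\Phi^\dagger\Phi)=d=\rk(\Phi)$, and the proof concludes via \cref{lem:nondegiff} and \cref{lem:tfaetight1} as described above.
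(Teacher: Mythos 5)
Your proof is correct, and its overall skeleton matches the paper's: reduce everything to the integer rank equality $\rk(\Phi^\dagger\Phi)=d$, obtain it by computing $\tr(\Phi^\dagger\Phi)$ two ways and invoking $\Char\F>d$, then finish with \cref{lem:nondegiff} and \cref{lem:tfaetight1}. The one place where you genuinely diverge is the justification of the identity $\tr(G)=rc$ with $r=\rk(G)$. The paper gets it from a rank factorization: it writes $G=CR$ with $C\in\F^{n\times r}$, $R\in\F^{r\times n}$ both of rank $r$, cancels one-sided inverses in $(CR)^2=cCR$ to obtain $RC=cI_r$, and uses $\tr(CR)=\tr(RC)=rc$. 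You instead observe that $G^2=cG$ with $c$ invertible forces the direct sum $\F^n=\ker(G-cI)\oplus\ker G$, i.e.\ $G$ is diagonalizable with spectrum in $\{0,c\}$ and $\image G=\ker(G-cI)$, whence the trace is $rc$ immediately. Your route is slightly more self-contained (it does not need the $CR$-decomposition proposition the paper sets up) and in effect shows that the ``lack of spectral theory'' the paper routes around is not actually an obstruction for an operator satisfying $G^2=cG$ with $c\neq 0$, since the minimal polynomial divides $x(x-c)$, which has distinct roots. The paper's $CR$ machinery is more robust in situations where such a clean annihilating polynomial is unavailable, but for this theorem both arguments do the same work. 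Your bookkeeping on upgrading $r\equiv d$ in $\F$ to $r=d$ in $\Z$ (using $0\le d-r\le d<\Char\F$) is also exactly what the paper does, stated a bit more explicitly.
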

\begin{proof}
    Let $\Phi=(\phi_j)_{j=1}^n$ be a collection of equal norm vectors with $\ip{\phi_j}{\phi_j}=a\neq 0$ in a non-isotropic space $V$ such that $d:=\dim(\image\Phi)$, over a field $\F$ with $\Char\F>d$ and  $(\Phi^\dagger\Phi)^2=\frac{na}{d}\Phi^\dagger\Phi$.
    
    Consider a $CR$-decomposition for the Gram matrix $\Phi^\dagger\Phi=CR$ 
    where $C\in \F^{n\times r}$ and $R\in \F^{r\times n}$, 
    such that $r=\rk(\Phi^\dagger\Phi)=\rk(C)=\rk(R)$. This gives us the following 
    \begin{align*}
        CRCR&=\frac{na}{d}CR\\
     \Rar \quad   C^{-}CRCRR^{-}&=\frac{na}{d}C^{-}CRR^{-}\\
   \Rar \quad     RC&=\frac{na}{d}I_r
    \end{align*}
    
    Now looking at the trace we can determine 
    \[na = \tr(\Phi^\dagger\Phi)=\tr(CR)=\tr(RC)=\tr\left(\frac{na}{d}I_r\right)=\frac{na}{d}\rk(\Phi^\dagger\Phi),\]
where the equalities are in $\bF$. Since $na/d \neq 0$ and $\Char\F > d$, we conclude that as integers
    $\rk(\Phi^\dagger\Phi)= d$.  Thus, \cref{lem:nondegiff} implies that $\Phi:\F^n\ra \image\Phi$ is a frame and further by \cref{lem:tfaetight1} an $\frac{na}{d}$-tight frame.
\end{proof}

\subsection{Naimark Complements}
\label{ssec:Naimarking}  

In Hilbert spaces, every tight frame has a (non-unique) Naimark complement, which preserves some of the key properties of the frame~\cite{wandering1998dai,neumark1943representation}.  Over finite dimensions, the Naimark complement of a tight frame (respectively, ETF) of $n$ vectors for a $d$-dimensional space always is a tight frame (respectively, ETF) of $n$ vectors for an $(n-d)$-dimensional space.  This decreases the parameter space which one needs to explore when analyzing such frames.  For more on the properties of the Naimark complement over the reals and complexes and issues when trying to extend the complement to non-tight frames, see~\cite{casazza_every_2013,king2025impossibility}.
In this section, we expand the theory of Naimark complements in the case of 
frames over fields of non-zero characteristic building off of the work in \cite{greaves_frames_2021}.
In the real and complex setting a Naimark complement need only satisfy $cI_n=\Phi^\dagger\Phi + \Psi^\dagger\Psi$ for $c$ the tight frame bound.
From this it can be shown that $\Phi\Psi^\dagger = 0$ and $\Psi\Phi^\dagger = 0$, through the positive definiteness 
of inner products, which in essence means the stacked matrix $\begin{bmatrix} \Phi\\ \Psi\end{bmatrix}$ would be a scaled unitary.
This single condition is not quite sufficient in the more general setting. 
\begin{ex}\label{ex:naiweird}
    Consider the following matrices
    \[\Phi=\begin{bmatrix} 1& 1\end{bmatrix}\quad \textrm{and} \quad \Psi=\begin{bmatrix} 1& 2\\ 0&1\\ 0& 1\\ 0 & 1\end{bmatrix},\]
    where $\Phi$ is a $2$-tight frame in the real model for $\F_3^1$ and $\Psi$ is a system of lines in the real model for $\F_3^4$.
    Notice that 
    \[
    2I=\Phi^\dagger\Phi+\Psi^\dagger\Psi = \begin{bmatrix} 1 & 1 \\ 1 & 1 \end{bmatrix} + \begin{bmatrix} 1 & 2 \\ 2 & 1\end{bmatrix}; 
    \]
    however, $\Psi$ is not even a frame for its image since $\rk(\Psi^\dagger\Psi) =1 < 2 = \rk(\Psi)$ (\cref{lem:nondegiff}).
    We can also see that 
    \[\Psi\Phi^\dagger=\begin{bmatrix}
        0&1&1&1
    \end{bmatrix}^\intercal.\]
\end{ex}

\begin{defn}
    Let $\Phi:\F^n\ra V$
    be a $c$-tight frame ($c\neq 0$) for a $d$-dimensional non-isotropic space $V$.
    Then the synthesis operator $\Psi:\F^n\ra W$ for a collection of vectors
    is a \textbf{Naimark complement} if $cI_n=\Phi^\dagger\Phi + \Psi^\dagger\Psi$ and 
    $\Psi\Phi^\dagger = 0$ 
\end{defn}
We also note that the geometries of $\image\Phi$ and $\image\Psi$ need not be the same, in the sense that they need not share a discriminant.

\begin{lem}\label{lem:naimarksarecomplentary}
    Let $\Phi:\F^n\ra V$
    be a $c$-tight frame ($c\neq 0$) for a $d$-dimensional non-isotropic space $V$ and $\Psi:\F^n\ra W$ 
    a Naimark complement then $\image\Psi$ is non-isotropic and
    $\Psi:\F^n\ra \image\Psi$
    is a $c$-tight frame with $\dim(\image\Psi) = n-d$. 
    Additionally $\image\Psi^\dagger = (\image\Phi^\dagger)^\perp$, and $\discr(\image\Psi)=c^n\discr(V)$.
    Furthermore, if $\Phi$ is a $(a,b,c)$-ETF, then $\Psi:\F^n\ra \image\Psi$ is 
    an $(c-a,b,c)$-ETF.
\end{lem}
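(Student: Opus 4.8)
The plan is to establish the structural claims about $\Psi$ in the order in which they build on one another, using the two defining relations $cI_n = \Phi^\dagger\Phi + \Psi^\dagger\Psi$ and $\Psi\Phi^\dagger = 0$ together with the fact that $\Phi$ is a $c$-tight frame, so $\Phi^\dagger\Phi$ is an idempotent times $c$ (by \cref{lem:tfaetight1}(iii), $(\Phi^\dagger\Phi)^2 = c\,\Phi^\dagger\Phi$). First I would note that $P := c^{-1}\Phi^\dagger\Phi$ is idempotent of rank $d$, hence $Q := I_n - P = c^{-1}\Psi^\dagger\Psi$ is idempotent of rank $n - d$; therefore $\rk(\Psi^\dagger\Psi) = n-d$, and $(\Psi^\dagger\Psi)^2 = c\,\Psi^\dagger\Psi$.

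Next I would show $\rk(\Psi) = n - d$, which by \cref{lem:nondegiff} gives that $\image\Psi$ is non-isotropic (so $\Psi$ is a frame for its span) and then by \cref{lem:tfaetight1} that $\Psi$ is a $c$-tight frame for $\image\Psi$ with $\dim(\image\Psi) = n-d$. We already have $\rk(\Psi) \geq \rk(\Psi^\dagger\Psi) = n-d$. For the reverse inequality I would use $\Psi\Phi^\dagger = 0$: this says $\image\Phi^\dagger \subseteq \ker\Psi$, and since $\Phi$ is a frame, \cref{lem:kerwhenframe} gives $\image\Phi^\dagger = \image\Phi^\dagger\Phi$, which has dimension $\rk(\Phi^\dagger\Phi) = d$; hence $\dim\ker\Psi \geq d$, i.e. $\rk(\Psi) \leq n - d$. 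This also pins down $\ker\Psi = \image\Phi^\dagger$ exactly (dimensions match), and dualizing, $\image\Psi^\dagger = (\ker\Psi)^\perp = (\image\Phi^\dagger)^\perp$, which is the orthogonal-complement claim. (One should check that the adjoint relation $\ker\Psi = (\image\Psi^\dagger)^\perp$ holds in this possibly-degenerate-image setting; since $\F^n$ carries a non-degenerate scalar product this is just the standard fact that $\ker\Psi = (\image\Psi^\dagger)^\perp$ for the analysis operator.)

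For the discriminant claim $\discr(\image\Psi) = c^n\,\discr(V)$, I would use the basic-submatrix characterization recalled before \cref{ex:discrisweird}: $\discr(\image\Phi) = \det((\Phi^\dagger\Phi)_b)\,\F_q^{\times 2}$ and similarly for $\Psi$, where since $\Phi$ is a frame for $V$ we have $\discr(\image\Phi) = \discr(V)$. The clean route is to pick a common index set: choose coordinates so that a basic submatrix of $P = c^{-1}\Phi^\dagger\Phi$ sits in rows/columns $K$ with $|K| = d$, and of $Q = I_n - P$ in the complementary block $[n]\setminus K$ of size $n-d$; then relate $\det(P_b)$ and $\det(Q_b)$ via the identity $\det(P) $-type minor relations for complementary principal minors of an idempotent (Jacobi's complementary-minor / the fact that for an idempotent $P$ of rank $d$, a $d\times d$ principal minor of $P$ equals the complementary $(n-d)\times(n-d)$ principal minor of $I-P$). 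Multiplying by the scalars — $(\Phi^\dagger\Phi)_b = c\,P_b$ contributes $c^d$ and $(\Psi^\dagger\Psi)_b = c\,Q_b$ contributes $c^{n-d}$ — yields a factor $c^n$ up to squares, giving $\discr(\image\Psi) = c^n\discr(V)$ in $\F_0^\times/\F^{\times 2}$ (noting $\discr(V) = \det((\Phi^\dagger\Phi)_b)\F^{\times 2}/c^d$, etc.). The main obstacle is getting this complementary-minor bookkeeping exactly right over $\F_q$ without appealing to orthonormal bases or positive-definiteness; I would isolate it as the key lemma and prove it by a direct elimination / Schur-complement argument on the block decomposition of $P$.

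Finally, the ETF statement: if $\Phi$ is an $(a,b,c)$-ETF, then $(\Phi^\dagger\Phi)_{jj} = a$ and $(\Phi^\dagger\Phi)_{jk}(\Phi^\dagger\Phi)_{kj} = b$ for $j \neq k$. From $\Psi^\dagger\Psi = cI_n - \Phi^\dagger\Phi$ we read off $(\Psi^\dagger\Psi)_{jj} = c - a$ and, for $j \neq k$, $(\Psi^\dagger\Psi)_{jk} = -(\Phi^\dagger\Phi)_{jk}$, so $(\Psi^\dagger\Psi)_{jk}(\Psi^\dagger\Psi)_{kj} = (\Phi^\dagger\Phi)_{jk}(\Phi^\dagger\Phi)_{kj} = b$; combined with $\Psi$ being a $c$-tight frame for its $(n-d)$-dimensional span, this is exactly the statement that $\Psi$ is an $(c-a, b, c)$-ETF.
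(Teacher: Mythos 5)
Your argument is correct and reaches all the claims, but it diverges from the paper's proof at two points, and one of those divergences leaves work undone. For tightness, the paper multiplies the defining identity $cI_n=\Phi^\dagger\Phi+\Psi^\dagger\Psi$ on the left by $\Psi$, uses $\Psi\Phi^\dagger=0$ to get $c\Psi=\Psi\Psi^\dagger\Psi$, and invokes \cref{lem:tfaetight2}, which packages the non-isotropy of $\image\Psi$ and tightness in one step; you instead pass through the idempotent $P=c^{-1}\Phi^\dagger\Phi$, get $\rk(\Psi^\dagger\Psi)=n-d$ from $\rk(I-P)=n-\rk(P)$, match it against $\rk(\Psi)$ via $\image\Phi^\dagger\se\ker\Psi$, and then apply \cref{lem:nondegiff} and \cref{lem:tfaetight1}(iii). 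Both are valid (your rank bookkeeping is the same as the paper's, just reorganized), and your route makes explicit why condition (iii) of \cref{lem:tfaetight1} is safe to use here even though the paper warns it does not imply non-isotropy in general. The real difference is the discriminant. The paper avoids minors entirely: from $\image\Psi^\dagger=(\image\Phi^\dagger)^\perp$ it gets $\F^n=\image\Phi^\dagger\oplus\image\Psi^\dagger$, uses multiplicativity of the discriminant over this orthogonal decomposition to conclude $\discr(\image\Phi^\dagger)=\discr(\image\Psi^\dagger)$, and then applies Lemma 3.18 of the cited reference, which converts each of these into $c^{d}\discr(V)$ and $c^{n-d}\discr(\image\Psi)$ respectively, giving $c^n\discr(V)$ after cancelling squares. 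Your complementary-principal-minor route is plausible, but as you acknowledge it rests on two unproven ingredients: the identity $\det(P_{K,K})=\det((I-P)_{K^c,K^c})$ for an idempotent $P$, and the existence of a single index set $K$ that simultaneously yields a basic submatrix of $\Phi^\dagger\Phi$ on $K$ and of $\Psi^\dagger\Psi$ on $K^c$. Both are true (the second follows from an exchange argument using $\F^n=\image P\oplus\ker P$), but as written this portion is a sketch rather than a proof, and the paper's orthogonal-decomposition argument is the cleaner way to close it. The ETF computation at the end is identical to the paper's.
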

\begin{proof}
    Using the fact that $\Psi\Phi^\dagger=0$ and because $\F^n$, $V$, 
    and $W$ are all non-isotropic,
    we have that $(\Psi\Phi^\dagger)^\dagger=\Phi\Psi^\dagger=0$.
    We can also determine that
    \begin{align*}
        \begin{split}
        cI_n&=\Phi^\dagger\Phi + \Psi^\dagger\Psi\\
    \Rar \quad    c\Psi&=\Psi\Phi^\dagger\Phi + \Psi\Psi^\dagger\Psi\\
\Rar \quad        c\Psi&= \Psi\Psi^\dagger\Psi
        \end{split}
    \end{align*}
    
    By \cref{lem:tfaetight2} we know 
    that since $\image\Psi$ is non-isotropic,
    $\Psi:\F^n\ra \image\Psi$ is a $c$-tight frame.
    
    Finally, we will look at the dimension of $\image\Psi$.
    Notice that because $cI_n=\Phi^\dagger\Phi + \Psi^\dagger\Psi$ and the non-isotropy, we have that
    \begin{equation*}
        n=\rk(cI_n)=\rk(\Phi^\dagger\Phi + \Psi^\dagger\Psi)\leq \rk(\Phi^\dagger\Phi) + \rk(\Psi^\dagger\Psi) = d + \rk(\Psi);
    \end{equation*}
    so, $\rk(\Psi)\geq n-d$.
    Because $\Phi\Psi^\dagger=0$ we know that $\image\Psi^\dagger \se \ker\Phi$
    meaning $\dim(\image\Psi^\dagger) \leq n-d$, i.e., $\rk(\Psi)= n-d$.
    
    Notice that $\image(\Psi^\dagger\Psi)=\image(cI-\Phi^\dagger\Phi)=\image(\Phi^\dagger\Phi)^\perp$, 
    and so by \cref{lem:kerwhenframe}, 
    we have that $\image(\Psi^\dagger)=\image(\Phi^\dagger)^\perp$. 
    
    This also means that $\F^n=\image(\Psi^\dagger)\oplus\image(\Phi^\dagger)$. Thus, $\discr(\image(\Psi^\dagger))\discr(\image(\Phi^\dagger))=\discr(\F^n)=\F^{\times 2}$, giving us that $\discr(\image(\Psi^\dagger))=\discr(\image(\Phi^\dagger))$.
    It follows from Lemma 3.18 of \cite{greaves_frames_2021} that $\discr(\image(\Phi^\dagger))=\det(\Phi^\dagger\Phi)\discr(V)$
    and $\discr(\image(\Psi^\dagger))=\det(\Psi^\dagger\Psi)\discr(\image\Psi)$. Putting this together, we get that
    $\discr(\image\Psi)=c^d\discr(V)/c^{n-d}=c^{-n}\discr(V)=c^{n}\discr(V)$.

    Finally, assume that $\Phi$ was an $(a,b,c)$-ETF.
    By $cI_n=\Phi^\dagger\Phi + \Psi^\dagger\Psi$ we know 
    $\ip{\psi_j}{\psi_j}=c-\ip{\phi_j}{\phi_j}$ and for $j\neq k$ that
    $\ip{\psi_j}{\psi_k}=-\ip{\phi_j}{\phi_k}$, and so 
    $\ip{\psi_j}{\psi_k}\ip{\psi_k}{\psi_j}=(-1)^2\ip{\phi_j}{\phi_k}\ip{\phi_k}{\phi_j}=b$
\end{proof}

We note that if $\Psi$ were known to be a frame for $W$ then the condition $\Psi\Phi^\dagger=0$ follows from $\Phi^\dagger\Phi+\Psi^\dagger\Psi=cI$.
Naimark complements always exists for non-degenerate tight frames over finite fields but the discriminant will depend on the field and $c$.

\begin{thm}\label{thm:naimarkexistsU} (Proposition 3.22 \cite{greaves_frames_2021})
    Let $\Phi:\F_{q^2}^n\ra V$ be a $c$-tight frame ($c\neq 0$) for a $d$-dimensional unitary geometry $V$. Then there exists a Naimark complement $\Psi:\F_{q^2}^{n}\ra W$ for an $(n-d)$-dimensional unitary geometry $W$.
\end{thm}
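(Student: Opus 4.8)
The plan is to build $\Psi$ directly from the candidate Gram matrix $G := cI_n - \Phi^\dagger\Phi$ and then check that it satisfies the two defining conditions of a Naimark complement. First I would reduce to the complex model: by \cref{lem:scalarprods-diag} we may choose a basis of $V$ with respect to which $\ip{u}{v} = u^*v$, so $\Phi$ becomes a $d\times n$ matrix with $\Phi^\dagger = \Phi^*$ and genuinely Hermitian Gram matrix $\Phi^*\Phi$. Since the frame operator $\Phi\Phi^\dagger = cI$ is self-adjoint, $c = c^\sigma \in \F_0$, and by \cref{lem:tfaetight1} the Gram matrix satisfies $(\Phi^*\Phi)^2 = c\,\Phi^*\Phi$.

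Next I would verify that $G = cI_n - \Phi^*\Phi$ meets the hypotheses of \cref{prop:whenGisETFUnitary}. It is Hermitian because $\Phi^*\Phi$ is and $c\in\F_0$; expanding and using $(\Phi^*\Phi)^2 = c\,\Phi^*\Phi$ gives $G^2 = c^2 I_n - 2c\,\Phi^*\Phi + (\Phi^*\Phi)^2 = c(cI_n - \Phi^*\Phi) = cG$; and since $c\neq 0$ the matrix $c^{-1}\Phi^*\Phi$ is idempotent with image $\image\Phi^*$ of dimension $d$ (via \cref{lem:kerwhenframe}), so its complementary idempotent $c^{-1}G$ has rank $n-d$. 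Then \cref{prop:whenGisETFUnitary} supplies a $c$-tight frame $\Psi:\F_{q^2}^n\to W$ for an $(n-d)$-dimensional unitary geometry $W$ (which, again by \cref{lem:scalarprods-diag}, we may take in the complex model) with $\Psi^*\Psi = G$, i.e.\ $\Phi^*\Phi + \Psi^*\Psi = cI_n$.

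It remains to establish the orthogonality condition $\Psi\Phi^\dagger = 0$, and this is the one genuinely nontrivial point, since \cref{ex:naiweird} shows that $\Phi^\dagger\Phi + \Psi^\dagger\Psi = cI_n$ alone is not enough. The key extra input is that here $\Psi$ is a bona fide frame for the $(n-d)$-dimensional space $W$, so that the stacked operator $\Theta := \left[\begin{smallmatrix}\Phi\\\Psi\end{smallmatrix}\right]$ is an $n\times n$ square matrix with $\Theta^*\Theta = \Phi^*\Phi + \Psi^*\Psi = cI_n$; as $c\neq 0$ this forces $\Theta$ to be invertible with $\Theta^{-1} = c^{-1}\Theta^*$, hence also $\Theta\Theta^* = cI_n$, and reading off the lower-left block of this identity yields $\Psi\Phi^* = 0$. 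Translating back through the chosen isometry $V\cong\F_{q^2}^d$ gives $\Psi\Phi^\dagger = 0$ in the original geometry, so $\Psi$ is a Naimark complement, of dimension $n-d$ by construction (consistent with \cref{lem:naimarksarecomplentary}). The only bookkeeping to be careful about is tracking adjoints when passing to and from the complex model, but since unitary changes of basis preserve both $\Phi^\dagger\Phi$ and the vanishing of $\Psi\Phi^\dagger$, no real obstruction arises there.
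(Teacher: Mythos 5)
Your proof is correct. The paper does not actually prove this statement itself --- it imports it as Proposition 3.22 of \cite{greaves_frames_2021} --- but your construction (realizing $cI_n-\Phi^\dagger\Phi$ as a Gram matrix via \cref{prop:whenGisETFUnitary} after checking Hermitianness, $G^2=cG$, and rank $n-d$ via complementary idempotents, then reducing to the complex model) is exactly the standard argument, with every step justified by results already quoted in the paper. In particular, your replacement of the positive-definiteness argument by the observation that a square matrix with a left inverse is invertible, so $\Theta^*\Theta=cI_n$ forces $\Theta\Theta^*=cI_n$ and hence $\Psi\Phi^*=0$, is precisely the right fix for positive characteristic and correctly addresses the failure mode exhibited in \cref{ex:naiweird}.
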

\begin{thm}\label{thm:naimarkexistsO} (Proposition 3.23 \cite{greaves_frames_2021})
Let $\Phi:\F_{q}^n\ra V$ be a $c$-tight frame ($c\neq 0$) for a $d$-dimensional orthogonal geometry $V$. Then there exists a Naimark complement $\Psi:\F_{q}^{n}\ra W$ for an $(n-d)$-dimensional orthogonal geometry $W$ with $\discr(W)=c^{n}\discr(V)$.
\end{thm}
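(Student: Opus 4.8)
The plan is to build the Naimark complement directly at the level of Gram matrices. Write $G := \Phi^\dagger\Phi$. Since $\Phi$ is a $c$-tight frame for the $d$-dimensional orthogonal geometry $V$, the matrix $G$ is symmetric, $\rk(G) = \rk(\Phi) = d$ by \cref{lem:kerwhenframe}, and $G^2 = cG$ by \cref{lem:tfaetight1} (iii). Set $H := cI_n - G$. Then $H^\intercal = H$, and $H^2 = c^2 I_n - 2cG + G^2 = c^2 I_n - cG = cH$.

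The one place requiring genuine care is showing $\rk(H) = n-d$, since over $\F_q$ we cannot invoke positive-definiteness or an orthonormal eigenbasis. Instead, because $c \neq 0$ the matrix $E := c^{-1}G$ is idempotent, so $\F_q^n = \image(E)\oplus\ker(E)$ with $\dim\image(E) = \rk(G) = d$ and hence $\dim\ker(E) = n-d$; the complementary idempotent $I_n - E$ satisfies $\image(I_n - E) = \ker(E)$, so $\rk(I_n - E) = n-d$, and therefore $\rk(H) = \rk(c(I_n - E)) = n-d$. Now $H$ is an $n\times n$ symmetric matrix of rank $n-d$, so \cref{prop:whenGisETF} (converse direction) yields a frame $\Psi:\F_q^n\ra W$ for an $(n-d)$-dimensional orthogonal geometry $W$ — the unique one whose discriminant equals $\det(H_b)\F_q^{\times 2}$ for a (equivalently, any) basic submatrix $H_b$ — with $\Psi^\dagger\Psi = H$; since $H^2 = cH$, part (iv) of that proposition further gives that $\Psi$ is a $c$-tight frame.

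It remains to verify that this $\Psi$ is in fact a Naimark complement of $\Phi$, after which the discriminant claim is immediate. By construction $\Phi^\dagger\Phi + \Psi^\dagger\Psi = G + H = cI_n$. For the orthogonality condition, compute $\Psi^\dagger\Psi\,\Phi^\dagger\Phi = HG = (cI_n - G)G = cG - G^2 = 0$, so $\image(\Phi^\dagger\Phi)\subseteq\ker(\Psi^\dagger\Psi)$; applying \cref{lem:kerwhenframe} to the frame $\Phi$ gives $\image(\Phi^\dagger) = \image(\Phi^\dagger\Phi)$ and to the frame $\Psi$ gives $\ker(\Psi) = \ker(\Psi^\dagger\Psi)$, whence $\image(\Phi^\dagger)\subseteq\ker(\Psi)$, i.e., $\Psi\Phi^\dagger = 0$. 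Thus $\Psi$ is a Naimark complement, and \cref{lem:naimarksarecomplentary} then delivers $\dim(\image\Psi) = n-d$ together with $\discr(W) = \discr(\image\Psi) = c^n\discr(V)$, completing the proof.

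I expect the rank identity $\rk(H) = n-d$ to be the only real obstacle; everything else is bookkeeping with \cref{lem:tfaetight1,lem:kerwhenframe,prop:whenGisETF,lem:naimarksarecomplentary}. If one preferred to avoid leaning on \cref{lem:naimarksarecomplentary} for the discriminant, one could instead pick a basis of $\F_q^n$ adapted to $\F_q^n = \image(G)\oplus\ker(G)$ — which is an orthogonal decomposition for the standard form, since $\image(G) = \ker(H)$ and $\ker(G) = \image(H)$ — and compare $\det(H_b)$ with $\det(G_b)$ and $\discr(V)$ through the congruence $c\,C^\intercal C = \diag(A,B)$ relating the two Gram blocks on those summands; but invoking \cref{lem:naimarksarecomplentary} is cleaner.
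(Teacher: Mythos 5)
Your proof is correct. Note that the paper itself offers no proof of this statement---it is imported verbatim as Proposition 3.23 of \cite{greaves_frames_2021}---so the only meaningful comparison is with that source, whose argument is essentially the one you give: set $H=cI_n-\Phi^\dagger\Phi$, check it is symmetric, idempotent up to the scalar $c$, and of rank $n-d$, and then invoke the Gram-matrix characterization (\cref{prop:whenGisETF}) to realize $H$ as $\Psi^\dagger\Psi$ for a frame of an $(n-d)$-dimensional orthogonal geometry. You correctly identify the one step that genuinely needs care over $\F_q$, namely $\rk(H)=n-d$, and your idempotent decomposition $\F_q^n=\image(c^{-1}G)\oplus\ker(c^{-1}G)$ handles it cleanly without any appeal to positive definiteness. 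The verification of $\Psi\Phi^\dagger=0$ via $HG=0$ together with \cref{lem:kerwhenframe} applied to both frames is also sound, and deferring the discriminant computation to \cref{lem:naimarksarecomplentary} is legitimate since that lemma's proof is independent of this existence statement (it only assumes a Naimark complement is given). The only cosmetic quibble is the parenthetical ``(equivalently, any) basic submatrix'': for the construction you only need to fix one basic submatrix of $H$ to pin down $W$, and the independence of the choice is only known a posteriori once $H$ is exhibited as a Gram matrix; as written this does not affect the argument.
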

In fact Proposition 3.23 of \cite{greaves_frames_2021} gives a stronger statement allowing $\Psi^\dagger\Psi$ to be any scalar multiple of $cI-\Phi^\dagger\Phi$, which in turn allows for the discriminants to be equal.

\subsection{Tightening Equiangularity}

Now we wish to characterize the situations when $(a,b)$-equiangular systems of lines are ETFs. The following result is originally due to Gerzon,
but was generalized in the finite field setting by~\cite{greaves_frames_2021}.

\begin{thm}\label{thm:gerzons}(Gerzon's Bound. Theorem 4.2 \cite{greaves_frames_2021})
    Let $V$ be a non-isotropic space, with $d=\dim V$, $k=\dim_{\F_0}{\F} \in \{1,2\}$, and $a^2\neq b$, then 
    there exists an $(a,b)$-equiangular system of $n$ lines in $V$ only if 
    $n\leq d+\frac{k}{2}(d^2-d)$. In the case of equality there exists some $c\in \F_0$ 
    where $\Phi$ is an $(a,b,c)$-ETF for $V$.
\end{thm}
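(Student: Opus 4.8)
The plan is to run the finite-field version of the classical ``absolute bound'' argument: attach to each line a self-adjoint rank-one operator and count dimensions inside the space of self-adjoint operators on $V$.

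First I would set $P_j := \phi_j\phi_j^\dagger \in \mathrm{End}(V)$ for each $j$. These are self-adjoint (since $(\phi_j\phi_j^\dagger)^\dagger = \phi_j^{\dagger\dagger}\phi_j^\dagger = \phi_j\phi_j^\dagger$), act by $P_j\phi_k = \ip{\phi_j}{\phi_k}\phi_j$, and have $\tr(P_j) = \ip{\phi_j}{\phi_j} = a$. Let $\mathrm{Herm}(V) = \{A\in\mathrm{End}(V) : A^\dagger = A\}$; this is a vector space over $\F_0$ (scalar multiplication by $\lambda$ preserves self-adjointness exactly when $\lambda^\sigma = \lambda$), and diagonalizing the form via \cref{lem:scalarprods-diag} identifies $\mathrm{Herm}(V)$ with the space of $\sigma$-Hermitian $d\times d$ matrices over $\F$, whose $\F_0$-dimension is $d + k\binom{d}{2} = d + \frac{k}{2}(d^2-d)$.

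The core step is a rigidity statement: the $\F_0$-linear map $\F_0^{n+1}\to\mathrm{Herm}(V)$, $(\alpha,c_1,\dots,c_n)\mapsto \alpha I + \sum_j c_j P_j$, has kernel of dimension at most $1$. Indeed, if $\alpha I + \sum_j c_j P_j = 0$, then evaluating at $\phi_k$ and pairing with $\phi_k$ yields $\alpha a + c_k(a^2-b) + b\sum_j c_j = 0$ for every $k$ (using condition (ii) of $(a,b)$-equiangularity); since $a^2 - b\neq 0$ and the other terms do not depend on $k$, all $c_k$ equal a common $t$, and the relation collapses to $\alpha I + t\,\Phi\Phi^\dagger = 0$. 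So the kernel is identified with $\{(\alpha,t)\in\F_0^2 : \alpha I + t\,\Phi\Phi^\dagger = 0\}$, which is a line if $\Phi\Phi^\dagger$ is a scalar operator and is trivial otherwise. In every case $\dim_{\F_0}\mathrm{span}\{P_1,\dots,P_n,I\}\ge (n+1)-1 = n$, and since this span sits inside $\mathrm{Herm}(V)$ we get $n\le d + \frac{k}{2}(d^2-d)$, which is Gerzon's bound. If equality holds, then $\mathrm{span}\{P_1,\dots,P_n,I\}$ has dimension $\ge n = \dim\mathrm{Herm}(V)$, so the kernel is exactly $1$-dimensional, which by the dichotomy just noted forces $\Phi\Phi^\dagger = cI$ for some $c\in\F_0$. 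Finally I would verify that $\Phi$ really spans $V$: if $W := \image\Phi$ had dimension $d' < d$, then every $P_j$ would lie in the subspace $\{A\in\mathrm{Herm}(V) : \image A\subseteq W,\ W^\perp\subseteq\ker A\}$, whose $\F_0$-dimension is $d' + \frac{k}{2}(d'^2-d')$ (identify such $A$ with Hermitian forms on $V/W^\perp$, of dimension $d'$ whether or not $W$ is isotropic, since $\dim W^\perp = d - d'$), while $I$ is not in it; then $n\le d' + \frac{k}{2}(d'^2-d') + 1$, which contradicts $n = d + \frac{k}{2}(d^2-d)$ because $f(x) := x + \frac{k}{2}(x^2-x)$ satisfies $f(d) - f(d-1) = k(d-1)+1\ge 2$ for $d\ge 2$ (the case $d=1$ being trivial). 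Hence $W = V$, and $\Phi$ is a $c$-tight frame for $V$, thus an $(a,b,c)$-ETF.

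I expect the main obstacle to be making the rigidity/kernel step robust against the degeneracies absent over $\R$ and $\C$: the trace form on $\mathrm{span}\{P_j\}$ can be degenerate (precisely when $a^2 + (n-1)b = 0$), isotropic vectors permit $a = 0$, and $\mathrm{char}\,\F$ may divide $n$ or $d$. Bundling $I$ into the spanning set and reducing every dependence to ``all $c_j$ equal'' is exactly what neutralizes these. The secondary subtlety is the span-check in the equality case, where (unlike over $\C$) one cannot assume at the outset that $\Phi$ is a frame for $V$ or that $\image\Phi$ is non-isotropic, which is why the comparison of $\dim_{\F_0}\mathrm{Herm}$ of the span against that of $V$ is needed.
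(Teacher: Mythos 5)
The paper does not actually reprove this result---it is quoted from Theorem~4.2 of \cite{greaves_frames_2021}---but your argument is correct and is essentially the proof given in that source: adjoin $I$ to the rank-one self-adjoint operators $\phi_j\phi_j^\dagger$ inside the $\F_0$-space of self-adjoint operators, use $a^2\neq b$ to show that every $\F_0$-linear dependence must have all coefficients on the $\phi_j\phi_j^\dagger$ equal (so the kernel is at most a line, and is a line exactly when $\Phi\Phi^\dagger$ is scalar), and read off both the dimension bound and, in the equality case, $\Phi\Phi^\dagger=cI$. Your closing verification that $\image\Phi=V$ in the equality case---needed because $c=0$ is not excluded a priori and the paper's definition of an ETF requires $\Phi$ to be a frame for $V$---is the one step that is easy to overlook over positive characteristic, and your dimension count for operators supported on a proper subspace handles it correctly.
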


\cref{thm:gerzons} gives the \textbf{absolute bound} on the number of equiangular lines depending on the space.
Although no bound is known relative to the parameter $b$ we can still characterize the situations in which an $(a,b)$-equiangular systems of lines $\Phi$ is an ETF depending only on the parameters $a$, $b$ and the Gram matrix $\Phi^\dagger\Phi$.

Let $\Phi:\F^n\ra V$ be an $(a,b,c)$-ETF where $d=\dim(V)$. Looking at the traces of $\Phi^\dagger\Phi$ and 
$\Phi\Phi^\dagger$ we have the following relation:
\begin{equation}\label{eq:nadc}
    \tr(\Phi^\dagger\Phi)=na=dc=\tr(\Phi\Phi^\dagger).
\end{equation}
Looking at $(\Phi^\dagger\Phi-aI)^2$, \cite{greaves_frames_2021} showed that

\begin{equation}
    \label{eq:acan1b}
    a(c-a)=(n-1)b.
\end{equation}
When $\Char\F$ does not divide $d(n-1)$, putting \eqref{eq:nadc} and \eqref{eq:acan1b} together yields
\begin{equation}
    \label{eq:welch}
b=\frac{(n-d)}{d(n-1)}a^2.
\end{equation}
We note that $a^2(n-d)=d(n-1)b$ 
is always true regardless of the characteristic.
Unlike in the real or complex setting, satisfying this equality, known as the \textbf{Welch bound}, does not guarantee that an equiangular system of lines is an ETF.

\begin{ex}\label{ex:welchweird}
    Consider the following $(2,1)$-system of equiangular lines which is a frame for $\F_5^7$ in the real model
    \[\Phi=\begin{bmatrix}
        0 & 0 & 0 & 0 & 0 & 0 & 0 & 2\\  
        0 & 0 & 0 & 0 & 0 & 1 & 2 & 0\\  
        0 & 0 & 0 & 0 & 2 & 4 & 2 & 0\\  
        0 & 0 & 0 & 0 & 2 & 4 & 0 & 2\\  
        0 & 1 & 1 & 2 & 1 & 2 & 2 & 3\\  
        1 & 0 & 1 & 2 & 3 & 2 & 2 & 3\\  
        1 & 1 & 0 & 2 & 3 & 4 & 4 & 1
    \end{bmatrix}\]
    which also satisfies $(n-1)b\equiv 2\equiv \frac{n-d}{d}a^2$. However, $\Phi$ is not a tight frame since $\Phi\Phi^\dagger$ is not diagonal.
\end{ex}

Under additional constraints, i.e., concerning sums of triple products of the vectors in $\Phi$, we can come up with a sufficient condition for being an ETF. Sums of triple products have been leveraged previously to understand the algebraic structure of ETFs over characteristic zero \cite{king_2-_2019,Zhu15}. If $\Phi=(\phi_j)_{j=1}^n$ is an 
$(a,b,c)$-ETF for $V$, where $\dim V=d$ we have that 
\begin{equation}
    \label{eq:sumoftripleprods}
    \sum_{\ell=1}^n\Delta(\phi_j,\phi_k,\phi_\ell)=\ip{\phi_j}{\phi_k}    \ip{\phi_k}{\sum_{\ell=1}^n\ip{\phi_\ell}{\phi_j}\phi_\ell} =\ip{\phi_j}{\phi_k} \ip{\phi_k}{c\phi_j}=cb.
\end{equation}
Here we will create a non-degenerate scalar product on linear operators and use non-degeneracy to show that an operator is the zero operator because the scalar product with every other operator is $0$. 
\begin{defn}
    Let $L=\{A:V\ra V\}$ be the $\F$-vector space of linear operators on a non-isotropic space $V$. Under a choice of basis we can consider $L$ to be the space of $d\times d$ matrices where $\dim(V)=d$. The \textbf{Frobenius scalar product} is then defined for $A,B \in L$ as 
    \[\ip{A}{B}_F:=\tr(A^\dagger B).\] 
\end{defn}

We note that in the case where $V=\F^n$ with the standard dot product or conjugate dot product, the adjoint operator on any map $A:\F^n\ra\F^n$ is the conjugate transpose and the matrices $\set{E_{ij}}{i,j\in[n]}$ form an orthonormal basis for $L$ with respect to the Frobenius scalar product, as $\tr(E_{ij}E_{\ell k})=1$ when $j=\ell$ and $i=k$ and zero otherwise. 
Therefore, the Frobenius scalar product is a non-degenerate Hermitian scalar product on $L$.

We may also consider the subspace of self adjoint operators $L_0=\set{A\in L}{A^\dagger=A}$ which is an $\F_0$-vector space. Let $V=\F_q^n$ be in the real model for case O, in which case 
\[
\set{E_{\ell\ell}}{\ell \in [n]} \cup \set{E_{ij}+E_{ji}}{i,j\in[n], i<j} 
\]
forms an orthogonal basis for $L_0$,
Notice that $\ip{E_{\ell\ell}}{(E_{ij}+E_{ji})}_F = 0$. Looking at the symmetrical terms we see that $\ip{(E_{ij}+E_{ji})}{(E_{\ell k}+E_{k\ell})}_F=2$ when $i=\ell$ and $j=k$ and zero otherwise. This means that the Frobenius scalar product is a non-degenerate symmetrical scalar product on $L_0$ in the real model, but in general and particularly in case U, the Frobenius scalar product may be degenerate.

\begin{thm}\label{thm:theresult}
    Let $\Phi=(\phi_j)_{j=1}^n$ be an $(a,b)$-equiangular system in a non-isotropic space $V$ where 
    $\dim(\image\Phi)=d$, such that $\Char\F>d$ 
    and $\frac{na}{d}\neq 0$.
    Then $\Phi:\F^n\ra \image\Phi$ is an $(a,b,\frac{n}{d}a)$-ETF if and only if
$(n-1)b=\frac{n-d}{d}a^2$ and
    $\sum_{\ell=1}^n\Delta(\phi_j,\phi_k,\phi_\ell)=\frac{nab}{d}$
    for all $j\neq k$ in $[n]$.
    \end{thm}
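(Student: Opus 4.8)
The overall plan is as follows. For the forward direction, assuming $\Phi$ is an $(a,b,\frac{n}{d}a)$-ETF, the two asserted conditions fall straight out of identities already recorded for ETFs in the excerpt. For the (harder) reverse direction, I will show that the two conditions force the Gram matrix $G := \Phi^\dagger\Phi$ to satisfy $G^2 = \frac{na}{d}\,G$; since $\Phi$ is equal norm with $\ip{\phi_j}{\phi_j} = a$, $d = \dim\image\Phi$, $\Char\F > d$, and $\frac{na}{d}\neq 0$, this is exactly the hypothesis of \cref{thm:buildingcharacter}, which then yields that $\Phi\colon\F^n\to\image\Phi$ is a $\frac{na}{d}$-tight frame --- hence, being $(a,b)$-equiangular, an $(a,b,\frac{n}{d}a)$-ETF. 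Throughout, write $c := \frac{n}{d}a$, and note $c \in \F_0$, $G = G^*$ (so $G^2$ is again Hermitian), $G_{jj} = a$, $G_{jk}G_{kj} = b$ for $j \neq k$, and the bookkeeping identity $\sum_{\ell=1}^n\Delta(\phi_j,\phi_k,\phi_\ell) = G_{jk}(G^2)_{kj}$, obtained by expanding $\Delta(\phi_j,\phi_k,\phi_\ell) = G_{jk}G_{k\ell}G_{\ell j}$ and summing over $\ell$.

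For the forward direction, suppose $\Phi$ is an $(a,b,\frac{n}{d}a)$-ETF. Then \eqref{eq:acan1b} reads $a(c-a) = (n-1)b$, and substituting $c = \frac{n}{d}a$ gives $(n-1)b = \frac{n-d}{d}a^2$; likewise \eqref{eq:sumoftripleprods} gives $\sum_\ell \Delta(\phi_j,\phi_k,\phi_\ell) = cb = \frac{nab}{d}$. So both numerical conditions hold. (Here $\frac{n}{d}a$ makes sense in $\F$ and agrees with the constant given by \eqref{eq:nadc} because $\Char\F > d$.)

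For the reverse direction, assume $(n-1)b = \frac{n-d}{d}a^2$ and $\sum_\ell \Delta(\phi_j,\phi_k,\phi_\ell) = \frac{nab}{d}$ for all $j \neq k$; I will verify $G^2 = cG$ entrywise. On the diagonal, $(G^2)_{jj} = a^2 + (n-1)b = a^2 + \frac{n-d}{d}a^2 = \frac{n}{d}a^2 = cG_{jj}$. Off the diagonal, the triple-product hypothesis together with the bookkeeping identity gives $G_{jk}(G^2)_{kj} = cb$ for $j \neq k$. If $b \neq 0$, then every off-diagonal $G_{jk}$ is nonzero, so $(G^2)_{kj} = cb/G_{jk}$; applying the involution and using that $G^2 = (G^2)^*$, that $c,b \in \F_0$, and that $b = G_{jk}G_{kj}$, we get $(G^2)_{jk} = (cb/G_{jk})^\sigma = cb/G_{kj} = cG_{jk}$. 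If $b = 0$, then for $i \neq \ell$ we have $G_{i\ell}(G_{i\ell})^\sigma = G_{i\ell}G_{\ell i} = b = 0$, which (by injectivity of $\sigma$) forces $G_{i\ell} = 0$; thus $G = aI$ and $(G^2)_{jk} = 0 = cG_{jk}$ for $j \neq k$, while the diagonal computation above is unaffected (and the triple-product hypothesis is vacuous, $\frac{nab}{d}=0$). Hence $G^2 = cG$, and \cref{thm:buildingcharacter} completes the proof.

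I do not expect a substantial obstacle; the one delicate point is the off-diagonal step of the reverse direction, where one must transport the relation $G_{jk}(G^2)_{kj} = cb$ correctly through the field involution --- using that $G^2$ is again Hermitian and that $b,c$ are fixed by $\sigma$ --- and separately dispose of the degenerate case $b = 0$, in which the triple-product condition carries no information and the Welch-type identity alone must pin down the tight-frame constant.
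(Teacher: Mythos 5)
Your proof is correct and follows essentially the same route as the paper's: both directions reduce to the identities \eqref{eq:acan1b}/\eqref{eq:sumoftripleprods} for the forward implication and to verifying $(\Phi^\dagger\Phi)^2=\frac{na}{d}\Phi^\dagger\Phi$ followed by an appeal to \cref{thm:buildingcharacter} for the reverse. The paper phrases that verification as showing the difference is orthogonal to every $E_{ij}$ under the Frobenius scalar product, but with that basis this is exactly the entrywise computation (including the same separate treatment of $b=0$) that you carry out directly.
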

\begin{proof}
The forward implication immediately follows from \eqref{eq:welch} and \eqref{eq:sumoftripleprods}.
    For the other implication, it suffices to show that $(\Phi^\dagger \Phi)^2=\frac{na}{d}(\Phi^\dagger \Phi)$, because when
    $\Char\F>d$, \cref{thm:buildingcharacter} gives us that $\image\Phi$ is non-isotropic and so 
    $\Phi:\F^n\ra \image \Phi$ would be a $\frac{na}{d}$-tight frame.

    Because $\F^n$ is non-isotropic with a non-degenerate scalar product, we know that the Frobenius scalar product on 
    $\F^{n\times n}$ is a non-degenerate Hermitian scalar product.
    This means we can show that $(\Phi^\dagger \Phi)^2-\frac{na}{d}(\Phi^\dagger \Phi)=0$ by showing that 
    $\ip{(\Phi^\dagger \Phi)^2-\frac{na}{d}(\Phi^\dagger \Phi)}{A}_F=0$ for all $A\in\F^{n\times n}$,
    or likewise showing that
    $\ip{(\Phi^\dagger \Phi)^2-\frac{na}{d}(\Phi^\dagger \Phi)}{E_{ij}}_F=0$ for all $i,j$.
    We will look at two cases, first when $i=j$ and then when $i\neq j$.
    \begin{align*}
        \ip{(\Phi^\dagger\Phi)^2 -\frac{na}{d}\Phi^\dagger\Phi}{E_{jj}}_F
        &=\ip{(\Phi^\dagger\Phi)^2}{ E_{jj}}_F-\frac{na}{d}\ip{\Phi^\dagger\Phi}{E_{jj}}_F\\
        &=\tr((\Phi^\dagger\Phi)^2 E_{jj})-\frac{na}{d}\tr(\Phi^\dagger\Phi E_{jj})\\
        &=\sum_{k=1}^n\ip{\phi_j}{\phi_k}\ip{\phi_k}{\phi_j}-\frac{na}{d}\ip{\phi_j}{\phi_j}\\
        &=a^2+(n-1)b-\frac{na^2}{d}\\
        &=(n-1)b-\frac{(n-d)a^2}{d}=0,
    \end{align*}
    where the last equality follows from the assumption $(n-1)b=\frac{n-d}{d}a^2$.
    We also need to consider the elements $E_{ij}$ where $i\neq j$, and assume that $b\neq 0$
    \begin{align}
        \ip{(\Phi^\dagger\Phi)^2 -\frac{na}{d}\Phi^\dagger\Phi}{E_{ij}}_F
        &=\ip{(\Phi^\dagger\Phi)^2}{E_{ij}}_F-\frac{na}{d}\ip{\Phi^\dagger\Phi}{E_{ij}}_F \nonumber\\
        &=\tr((\Phi^\dagger\Phi)^2 E_{ij})-\frac{na}{d}\tr(\Phi^\dagger\Phi E_{ij}) \nonumber\\
        &=\sum_{k=1}^n\ip{\phi_j}{\phi_k}\ip{\phi_k}{\phi_i}-\frac{na}{d}\ip{\phi_j}{\phi_i} \label{eqn:zerook}\\
        &=\frac{1}{\ip{\phi_i}{\phi_j}}\left(\sum_{k=1}^n\Delta(\phi_j,\phi_k,\phi_i)-\frac{na}{d}b\right) \nonumber\\
        &= 0 \nonumber,
    \end{align}
    where the last equality follows from $\sum_{k=1}^n\Delta(\phi_j,\phi_k,\phi_i)=\frac{nab}{d}$.
    Notice also that if $b=0$, \eqref{eqn:zerook} would be $0$, and so we would get the same result.
    This gives us $(\Phi^\dagger \Phi)^2=\frac{na}{d}(\Phi^\dagger \Phi)$ as desired. And so by \cref{thm:buildingcharacter} we have that $\Phi:\F^n\ra \image\Phi$ is an $(a,b,\frac{na}{d})$-ETF.
\end{proof}

\begin{thm}\label{thm:theresult2}
        Let $\Phi=(\phi_j)_{j=1}^n$ be an $(a,b)$-equiangular system which is also a frame for a non-isotropic $d$-dimensional space $V$, such that $\Char\F\nmid d$ 
    and $\frac{na}{d}\neq 0$.
    Then $\Phi:\F^n\ra V$ is an $(a,b,\frac{n}{d}a)$-ETF if and only if
$(n-1)b=\frac{n-d}{d}a^2$ and
    $\sum_{\ell=1}^n\Delta(\phi_j,\phi_k,\phi_\ell)=\frac{nab}{d}$
    for all $j\neq k$ in $[n]$.
\end{thm}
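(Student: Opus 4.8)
The plan is to reduce \cref{thm:theresult2} to \cref{thm:theresult} by observing that the only difference between the two statements is the hypothesis: \cref{thm:theresult} assumes $\Char\F > d$, whereas \cref{thm:theresult2} assumes merely $\Char\F \nmid d$ together with the stronger geometric hypothesis that $\Phi$ is already a frame for $V$ (so $d = \dim V = \dim(\image\Phi)$ and $\rk(\Phi^\dagger\Phi) = \rk(\Phi) = d$ by \cref{lem:nondegiff}). The forward implication is identical to that of \cref{thm:theresult}: it follows immediately from \eqref{eq:welch} and \eqref{eq:sumoftripleprods}, and here \eqref{eq:welch} is legitimate because $\Char\F \nmid d(n-1)$ — we get $\Char\F\nmid d$ by hypothesis, and $\Char\F \nmid (n-1)$ follows since otherwise \eqref{eq:acan1b} would force $a(c-a) = 0$, contradicting the already-established tightness with $c = \frac{na}{d} \neq 0$ and $b = \Delta(\phi_j,\phi_k)/\ldots$; more carefully, one should note $b \neq 0$ is not assumed, so instead just observe that being an $(a,b,c)$-ETF means \eqref{eq:nadc} and \eqref{eq:acan1b} hold and combine them directly without dividing, as the excerpt already remarks that $a^2(n-d) = d(n-1)b$ holds regardless of characteristic.

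For the reverse implication, the key point is that the proof of \cref{thm:theresult} uses $\Char\F > d$ in exactly one place: the appeal to \cref{thm:buildingcharacter} to conclude that $\image\Phi$ is non-isotropic, i.e., that $\rk(\Phi^\dagger\Phi) = d$. In the present setting this conclusion is free — it is part of the hypothesis that $\Phi$ is a frame for $V$, so \cref{lem:nondegiff} already gives $\rk(\Phi^\dagger\Phi) = \rk(\Phi) = \dim V = d$. Thus I would repeat verbatim the Frobenius–scalar–product argument from the proof of \cref{thm:theresult}: since $\F^n$ is non-isotropic, the Frobenius scalar product $\ip{A}{B}_F = \tr(A^\dagger B)$ is non-degenerate on $\F^{n\times n}$, so to prove $(\Phi^\dagger\Phi)^2 = \frac{na}{d}\Phi^\dagger\Phi$ it suffices to check that $\ip{(\Phi^\dagger\Phi)^2 - \frac{na}{d}\Phi^\dagger\Phi}{E_{ij}}_F = 0$ for all $i,j$. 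The diagonal case $i = j$ reduces to $a^2 + (n-1)b - \frac{na^2}{d} = (n-1)b - \frac{(n-d)a^2}{d} = 0$ using the Welch hypothesis; the off-diagonal case $i \neq j$ reduces (multiplying through by $\ip{\phi_i}{\phi_j}$ when $b \neq 0$, and noting the expression vanishes outright when $b = 0$) to $\sum_{k=1}^n \Delta(\phi_j,\phi_k,\phi_i) - \frac{nab}{d} = 0$, which is the triple-product hypothesis.

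Having established $(\Phi^\dagger\Phi)^2 = \frac{na}{d}\Phi^\dagger\Phi$, I conclude via \cref{lem:tfaetight1}(iii)$\Rightarrow$(i): since $\Phi$ is a frame for $V$, that lemma applies directly and yields $\Phi\Phi^\dagger = \frac{na}{d}I$, so $\Phi$ is a $\frac{na}{d}$-tight frame, hence an $(a,b,\frac{na}{d})$-ETF. (One does not even need \cref{thm:buildingcharacter} or \cref{thm:tfaetight2} here, precisely because the non-isotropy of $\image\Phi$ is assumed rather than derived.) I do not expect any serious obstacle: the main subtlety is purely bookkeeping — making sure that every step of the characteristic argument that previously relied on $\Char\F > d$ is re-justified using only $\Char\F \nmid d$ plus the frame hypothesis, in particular confirming that $\frac{n-d}{d}$, $\frac{na}{d}$, and $\frac{nab}{d}$ are all well-defined elements of $\F$ (which they are, since $\Char\F \nmid d$) and that the Welch relation can be invoked in the forward direction. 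A reasonable alternative, which I might mention, is to prove \cref{thm:theresult2} first as the "clean" statement and then derive \cref{thm:theresult} from it by noting $\Char\F > d \Rightarrow \Char\F \nmid d$ together with \cref{thm:buildingcharacter} supplying the frame hypothesis; but as organized in the paper the stated dependency runs the other way, so I would simply cite the proof of \cref{thm:theresult} and indicate the one modification.
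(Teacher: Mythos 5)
Your proposal is correct and follows essentially the same route as the paper: the paper's own proof of \cref{thm:theresult2} simply observes that since $\Phi$ is already a frame for $V$, the identity $(\Phi^\dagger\Phi)^2=\frac{na}{d}\Phi^\dagger\Phi$ (established by the same Frobenius--scalar--product computation as in \cref{thm:theresult}) yields tightness directly via \cref{lem:tfaetight1}, bypassing \cref{thm:buildingcharacter} and hence the $\Char\F>d$ hypothesis. Your observation that the forward direction only requires dividing $a^2(n-d)=d(n-1)b$ by $d$ (so $\Char\F\nmid d$ suffices) is also consistent with the paper.
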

\begin{proof}
    In this case because $\Phi$ is a frame for $V$, $(\Phi^\dagger \Phi)^2=\frac{na}{d}(\Phi^\dagger \Phi)$ would imply that $\Phi$ is also a $\frac{na}{d}$-tight frame. The proof is then nearly identical to \cref{thm:theresult} and so we have that $\Phi:\F^n\ra V$ is an $(a,b,\frac{na}{d})$-ETF.
\end{proof}

Now we want to look at one consequence of \cref{lem:totally_iso_ngeq2d} and \eqref{eq:welch} which was 
originally noted in Remark 2.15 in \cite{greaves_frames_2021-1}.
\begin{rem}
    \label{rem:bneq0}
Consider an $(a,b,c)$-ETF $\Phi=(\phi_j)_{j=1}^n$ for a $d$ dimensional space where $n>d$. If $b=0$, then with the relation $a(c-a)=(n-1)b$ 
this must imply that either $a$ or $c-a$ are zero. If $a$ is zero, then the corresponding Gram 
matrix $G=0$, which is not possible by \cref{lem:nondegiff}, so $a$ can not equal $0$, when $b=0$.
Now we will consider the case where $c-a=0$, or in other words $c=a\neq 0$. We also know that $na=dc$
which suggests that $n=d$. So when $n>d$ we can conclude that $b\neq 0$.
\end{rem}

\subsection{Two-graphs and ETFs in Orthogonal Geometries}

Looking specifically at orthogonal geometries, we can determine many analogous combinatorial equivalent notions to tightness.
The following is a corollary of Theorem 4.3 from \cite{greaves_frames_2021-1} which generalizes some of the results from \cite{seidel_two_graphs_76,waldron_equiangular_from_graphs_2009}.

\begin{thm}
    \label{lem:etfiffregulartwograph}
    Fix a prime $p>n$.
    Let $\Phi$ be an $(a,1)$ equiangular system of $n$ vectors which is also a frame for a $d$-dimensional orthogonal space $V$ over the field $\F_{p^\ell}$ whose induced two-graph is non-trivial and $n>d$.
    $\Phi$ is an $(a,1,c)$-ETF for $V$
    if and only if the induced two-graph is regular.
\end{thm}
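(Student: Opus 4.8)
The plan is to translate the frame-theoretic condition of being an ETF into a spectral condition on a Seidel adjacency matrix of the induced two-graph, and then invoke the standard characterization of regular two-graphs as those whose Seidel matrices have exactly two eigenvalues. Since $b=1$ here, we have $\beta=1$, and $S := \Phi^\dagger\Phi - aI$ is the image in $\F_{p^\ell}$ of an integer Seidel adjacency matrix $\tilde S$ of the graph $G$ on the vertex set $\Phi$, where $\phi_i\sim\phi_j$ iff $\ip{\phi_i}{\phi_j}=-1$. Because $p>n$, the reduction map $\pi:\Z\to\F_{p^\ell}$ is injective on integers of absolute value less than $p$, and in particular on the entries of $\tilde S$, on $n$, on $d$, and (crucially) on the eigenvalues and characteristic-polynomial coefficients of $\tilde S$, which are integers bounded in terms of $n$. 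This is the device that lets us move spectral information back and forth between $\Z$ and $\F_{p^\ell}$.

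First I would establish the ETF side. By \cref{thm:theresult2} (applicable since $\Phi$ is a frame for $V$, $\Char\F = p > n > d$ so $p\nmid d$, and $\frac{na}{d}\neq 0$ — note $a\neq 0$ by \cref{rem:bneq0} since $n>d$ and $b=1\neq 0$, and $d\not\equiv 0$), being an $(a,1,\tfrac{n}{d}a)$-ETF is equivalent to $(\Phi^\dagger\Phi)^2 = \tfrac{na}{d}\Phi^\dagger\Phi$ together with the Welch bound (the Welch bound holds automatically once $\Phi$ is tight, or can be carried along). Expanding $\Phi^\dagger\Phi = S + aI$, the tightness relation becomes a quadratic polynomial identity in $S$ over $\F_{p^\ell}$: $S^2 + (2a - \tfrac{na}{d})S + (a^2 - \tfrac{na^2}{d})I = 0$. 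Thus $\Phi$ is an ETF iff $S$ satisfies a specific monic quadratic over $\F_{p^\ell}$; equivalently, iff the integer matrix $\tilde S$ satisfies the corresponding quadratic over $\Q$ after lifting the coefficients (which are integers with absolute value $<p$, hence uniquely recoverable). Since $\tilde S$ is a real symmetric integer matrix, it satisfies a monic quadratic over $\Q$ iff it has at most two distinct eigenvalues; and the two-graph is non-trivial and $n>d$, so $\tilde S$ is neither $\pm(J-I)$-like degenerate nor has a single eigenvalue — it has exactly two. By the classical result quoted in the excerpt (``An equivalent condition for a regular two-graph is that the spectrum of any of its Seidel adjacency matrices has exactly two eigenvalues''), this is precisely regularity of the induced two-graph. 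Conversely, if the two-graph is regular, $\tilde S$ has exactly two eigenvalues $\rho_1,\rho_2$, so $(\tilde S - \rho_1 I)(\tilde S - \rho_2 I) = 0$ over $\Z$ (both $\rho_i\in\Z$ since $\tr\tilde S=0$ and the trace of $\tilde S^2$ is an integer determining them); reducing mod $p$ and matching coefficients with the tightness quadratic — using the eigenvalue multiplicities to pin down which eigenvalue has multiplicity $d$ versus $n-d$, together with $\rho_1+\rho_2 = $ (trace relation) and the Welch/relative-bound identity $a(c-a)=(n-1)b$ — yields $(\Phi^\dagger\Phi)^2=\tfrac{na}{d}\Phi^\dagger\Phi$, hence an ETF by \cref{thm:theresult2}.

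The main obstacle I anticipate is the bookkeeping in the converse direction: from ``$\tilde S$ has two eigenvalues'' one gets a quadratic annihilating $S$, but one must check its coefficients are exactly $\tfrac{na}{d}-2a$ and $\tfrac{na^2}{d}-a^2$ and not merely *some* quadratic. This requires identifying the multiplicities: the eigenspace of $\Phi^\dagger\Phi$ for the nonzero eigenvalue $\tfrac{na}{d}$ has dimension $\rk(\Phi^\dagger\Phi)=\rk(\Phi)=d$ (using that $\Phi$ is a frame for $V$), so the corresponding eigenvalue of $\tilde S$ is $\tfrac{na}{d}-a$ with multiplicity $d$, and the other eigenvalue $-a$ (i.e. the one making $\Phi^\dagger\Phi$ singular, consistent with $n>d$) has multiplicity $n-d$; these are integers of size $<p$, so they are determined over $\Z$ by their reductions. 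Feeding the known trace $\tr\tilde S = 0$, i.e. $d(\tfrac{na}{d}-a) + (n-d)(-a) = na - da - na + da = 0$ — which checks out — and the sum-of-squares relation (equivalently the Welch bound $a^2(n-d)=d(n-1)b=d(n-1)$), one recovers that the only quadratic $\tilde S$ can satisfy is the required one, closing the argument. A secondary subtlety is confirming that ``non-trivial two-graph'' plus $n>d$ genuinely forces exactly two (not one) distinct eigenvalues, which follows since a single eigenvalue would force $\tilde S$ scalar, impossible for a Seidel matrix with $n\geq 2$; and the non-triviality rules out the complete/empty graph cases where the spectral correspondence with ``regular two-graph'' degenerates, exactly as flagged in the excerpt's discussion preceding \cref{ex:twonotswitch}.
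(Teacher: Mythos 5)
Your route is genuinely different from the paper's. The paper switches $\Phi$ so that $\phi_1$ has scalar product $1$ with every other vector, identifies the descendant graph $G_{\phi_1}$ as a $p$-modular strongly regular graph via Theorem 4.3 of \cite{greaves_frames_2021-1}, and uses $p>n$ to upgrade SRG$_p$ to SRG with $k=2\mu$, which is the paper's stated criterion for regularity of the two-graph; the converse runs the same machinery backwards. You instead work directly with the Seidel matrix $S=\Phi^\dagger\Phi-aI$ and the two-eigenvalue characterization of regular two-graphs. Your converse is in fact cleaner than the paper's: once $\tilde S^2=(\rho_1+\rho_2)\tilde S+(n-1)I$ over $\Z$ is reduced mod $p$, the singularity of $\Phi^\dagger\Phi$ (from $\rk(\Phi^\dagger\Phi)=\rk\Phi=d<n$, \cref{lem:nondegiff}) forces $-a\in\{\rho_1,\rho_2\}$ in $\F_{p^\ell}$, whence $(\Phi^\dagger\Phi)^2=c\,\Phi^\dagger\Phi$ with $c=\rho_1+\rho_2+2a$, and \cref{lem:tfaetight1} finishes; you do not need \cref{thm:theresult2} or the multiplicity bookkeeping you describe. (Two small corrections: $a\neq 0$ follows from $a(c-a)=(n-1)\cdot 1\not\equiv 0$ when $p>n$, not from \cref{rem:bneq0}, which only treats $b=0$; and the relevant citation for closing the converse is \cref{lem:tfaetight1}, not \cref{thm:theresult2}.)

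There is, however, one concrete gap in your forward direction. Your central device, that $\pi:\Z\to\F_{p^\ell}$ is ``injective on integers of absolute value less than $p$,'' is false: $\pi$ is injective on any set of integers contained in an interval of length less than $p$, but $[-(n-2),\,n-2]$ has length $2n-4$, which exceeds $p-1$ whenever $n<p\le 2n-4$ (e.g.\ $n=10$, $p=11$ identifies $-8$ and $3$). The ETF identity gives only that the integers $\tilde S_{ij}(\tilde S^2)_{ij}\in[-(n-2),n-2]$ share a common residue mod $p$, and that alone does not force them to be equal, so the lift of your quadratic to $\Z$ is not justified as written. The repair is to observe that $\tilde S_{ij}(\tilde S^2)_{ij}=(n-2)-2t_{ij}$, where $t_{ij}\in[0,n-2]$ counts the coherent triples containing $\{i,j\}$; the $t_{ij}$ share a common residue mod $p$ and lie in a window of length $n-2<p$, hence are all equal, which is precisely regularity of the two-graph (and then also yields the integer lift of the quadratic if you still want it). With that substitution your argument goes through; note this parity step is the exact analogue of the inequality the paper uses to force $k=2\mu$ from $k\equiv_p 2\mu$.
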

\begin{proof}
    Fix a prime $p>n$. Let $\Phi=(\phi_j)_{j=1}^n$ be an $(a,1)$-equiangular frame for $V=\F_{p^\ell}^d$.\\
    ($\Rightarrow$)
    Let $\Phi$ be an $(a,1,c)$-ETF, then $\Phi$ is switching equivalent to some ETF $\Psi$ such that
    \[
    \Psi^\dagger\Psi=\begin{bmatrix}
        a & (1^{n-1})^\intercal\\
        1^{n-1} & aI_{n-1}+\overline\Sigma 
    \end{bmatrix}
    \] 
    where $1^{n-1}$ is the all ones vector with $n-1$ entries, and $\Sigma$ is the adjacency matrix for $G_{\phi_1}$ with $v=n-1$ vertices.
    From Theorem 4.3 of \cite{greaves_frames_2021-1} we know that $\Sigma$ is the adjacency matrix of $G_{\phi_1}$ which is a $(n-1,k,\lambda,\mu)$-$\text{SRG}_p$, with parameter chosen minimally, satisfying $k\equiv_p 2\mu$, $v\equiv_p 3k-2\lambda -1$ and that there exists some $\delta\in\F_{p^{\ell'}}$ such that $\delta^2\equiv (\lambda-\mu)^2+4(k-\lambda)$. \\ 
    Notice that if $\Char\F=p>n$ we would have that $G_{\phi_1}$ is an $(n-1,k,\lambda,\mu)$-SRG, and because $k-2\mu\leq n-2\mu\leq n<p$ we have that
    $k=2\mu$. And so $\Phi$ induces a regular two-graph.\\
    ($\Leftarrow$)
    Assume that $\Phi$ is an $(a,1)$-equiangular system of lines which induces a regular two-graph with parameters $(n,\ell, m)$ which satisfy $n=3m-2\ell$. Because the induced two-graph is regular there exists a vector $\phi_1$, such that $G_{\phi_1}$ is strongly regular, meaning there is a switching equivalent system of equiangular lines $\Psi$ where 
    \[
    \Psi^\dagger\Psi=\begin{bmatrix}
        a & (1^{n-1})^\intercal\\
        1^{n-1} & aI_{n-1}+\overline\Sigma 
    \end{bmatrix}
    \]
    such that $\Sigma$, as an integer matrix, is the adjacency matrix of $G_{\phi_1}$ which is a $(v:=n-1,k:=m,\lambda:=\ell,\mu:=\frac{m}{2})$-SRG with $k=2\mu$, $v= 3k-2\lambda -1$ and $(\lambda-\mu)^2+4(k-\lambda)=(\ell-\frac{m}{2})^2+4(m-\ell)$ being the square of an integer. This means that Theorem 4.3 from \cite{greaves_frames_2021-1} implies that $\Phi$ is a $(a,1,c)$-ETF where $c$ is a square root of $(\ell-\frac{m}{2})^2+4(m-\ell)$. 
\end{proof}
We note that a similar result holds for $b \neq 1$, but requires passing to field extensions and rescaling.

\begin{cor}
    \label{cor:niseven}
    Let $\Phi$ be a $(a,b)$ equiangular system of $n$ lines in a $d$-dimensional orthogonal space $V$ over the field $\F_{p^\ell}$ where $p>n$,
    then $\Phi$ is an $(a,b,c)$-ETF for $V$
    only if $n$ is even
\end{cor}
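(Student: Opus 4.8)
The plan is to reduce to \cref{lem:etfiffregulartwograph} and then read off the parity of $n$ from the parameters of the associated strongly regular graph. First I would place ourselves in the setting of that lemma. Since $\Phi$ is an $(a,b,c)$-ETF with $n>d$, \cref{rem:bneq0} gives $b\neq 0$. As $p$ is odd, every nonzero element of $\F_{p^\ell}$ is a fourth power in $\F_{p^{4\ell}}$, so there is an $\alpha$ with $\alpha^{4}b=1$; viewing $\Phi$ over $\F_{p^{4\ell}}$ and replacing it by $\alpha\Phi$ produces an $(\alpha^{2}a,1,\alpha^{2}c)$-ETF for a $d$-dimensional orthogonal space over $\F_{p^{4\ell}}$ with the same number $n$ of vectors and the same induced two-graph, and with $p>n$ still holding. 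Thus I may assume $b=1$, and I also restrict to the case where the induced two-graph is non-trivial and non-complete: the remaining configurations are precisely those with $d\in\{1,n-1\}$ (regular simplices and their Naimark complements), which lie outside the hypotheses of \cref{lem:etfiffregulartwograph} and for which $n$ may indeed be odd.

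With these reductions, \cref{lem:etfiffregulartwograph} applies and shows that the two-graph induced by $\Phi$ is regular. Since it is also non-trivial and non-complete, the correspondence between regular two-graphs and strongly regular graphs with $k=2\mu$ recalled in \cref{sec:combinatorics-two-graphs} produces a vector $\phi_1\in\Phi$ with $G_{\phi_1}$ strongly regular; moreover, because $p>n$, the proof of \cref{lem:etfiffregulartwograph} shows that $G_{\phi_1}$ is a genuine (integer) strongly regular graph with parameters $(v,k,\lambda,\mu)$, where $v=n-1$, $k=2\mu$, and $\mu\geq 1$ (the graph being neither empty nor complete). I would then apply the standard counting identity for a strongly regular graph,
\[
    k(k-\lambda-1)=(v-k-1)\mu,
\]
(see, e.g., \cite{Cameron_Lint_1991}), substitute $v=n-1$ and $k=2\mu$, and divide by $\mu\neq 0$ to get $2(2\mu-\lambda-1)=n-2-2\mu$, i.e., $n=2(3\mu-\lambda)$. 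Hence $n$ is even.

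The substance of the argument is concentrated in the first step. The main obstacle is verifying that passing to the field extension and rescaling preserves everything \cref{lem:etfiffregulartwograph} needs --- equiangularity, the dimension $d$, the count $n$, tightness, the inequality $p>n$, and the combinatorial type of the induced two-graph --- and carefully excising the degenerate two-graphs, without which (as the regular simplex already shows) $n$ need not be even. Once the setup is in place, the strongly-regular-graph identity is routine.
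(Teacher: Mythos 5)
Your proof is correct and follows the same route as the paper: reduce to \cref{lem:etfiffregulartwograph} to conclude the induced two-graph is regular, then extract evenness of $n$. The paper's entire proof is the single sentence that regular two-graphs have an even number of points (citing \cite{Cameron_Lint_1991}); everything else you wrote is detail the paper leaves implicit. Your additions are worthwhile, though. First, the theorem only covers $b=1$, and the paper merely remarks that general $b$ needs ``field extensions and rescaling''; your passage to $\F_{p^{4\ell}}$ and the check that $\alpha^4b=1$ is solvable there (using that $4$ divides $(p^{2\ell}+1)(p^\ell+1)$) makes that reduction precise, and you correctly verify that $d$, $n$, the two-graph, and $p>n$ all survive. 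Second, you correctly observe that the corollary as literally stated omits the hypotheses $n>d$ and non-triviality of the two-graph, and that without them the claim fails: regular simplices give $d=n-1$ with $n$ odd, and indeed the paper's own \cref{ex:twonotswitch} (a $(1,3,7)$-ETF of $3$ vectors in $\F_{11}^2$ with $11>3$) is a counterexample to the unqualified statement. Third, where the paper cites the evenness of regular two-graphs as a known fact, you rederive it from the strongly regular graph identity $k(k-\lambda-1)=(v-k-1)\mu$ with $v=n-1$ and $k=2\mu$, yielding $n=2(3\mu-\lambda)$; this is a correct, self-contained substitute for the citation (and $\mu\geq 1$ does follow from $k=2\mu$ with $G_{\phi_1}$ non-empty). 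In short: same skeleton, but your version is the complete argument.
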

\begin{proof}
    This follows from the fact that regular two-graphs have an even number of points~\cite{Cameron_Lint_1991}.
\end{proof}

\section{Simplices: Equiangular Cliques}\label{sec:simplex}
In this section and in \cref{ssec:incoherentreality} we generalize concepts like the binder~\cite{FICKUS201898}, pillars~\cite{LEMMENS1973494}, and incoherent sets~\cite{gillespie-2018-equiangular} by looking at collections of vectors whose triple products are equal. These correspond to the cliques and cocliques of two-graphs induced by equiangular systems of lines.

In this section we wish to study the minimal dependent subsets of frames and when they are themselves tight frames for their spans, 
and more specifically when they are ETFs. This has been studied for real and complex frames in \cite{FICKUS201898}.
\begin{defn}
A collection of $s+1$ vectors $\Phi=(\phi_j)_{j=1}^{s+1}$ for $s$ a positive 
integer is called a \textbf{regular $s$-simplex}
if $\Phi:\F^{s+1}\ra V$ is an $(a,b,c')$-ETF for an $s$-dimensional non-isotropic space $V$.
\end{defn}
We use the notation $c'$ with the prime as we will mainly explore when regular $s$-simplices are subsets of an $(a,b,c)$-ETF.

In this section we will assume any field $\F=\F_q$ is a finite 
field of case U or O where $q$ is odd, as this will allow us to use \cref{thm:naimarkexistsU,thm:naimarkexistsO} which will be necessary for the following analysis. We will also assume that $s>1$, in which case $b\neq 0$ by \cref{rem:bneq0}.
Existence of simplices in certain dimensions depends only on the characteristic of the field.

Let $\Phi=(\phi_j)_{j=1}^{s+1}$ be a regular $s$-simplex, an $(a,b,c')$-ETF for $V$, with $s>1$; then, we know that $c'\neq 0$
from \cref{lem:totally_iso_ngeq2d} as $s+1<2s$. This means there exists a Naimark complement 
$C:\F^{s+1}\ra W$ where $W=\F$ is a $1$-dimensional space and $C$ is a $(c'-a,(c'-a)^2,c')$-ETF by \cref{lem:naimarksarecomplentary} with $\discr(W)=(c')^{s+1}\discr(V)$.

The discriminant then determines the geometry and therefore the scalar product on $W=\F$. We will denote the scalar product on $W$ as $x\cdot_m y:=\ip{x}{y}_\F=x^\sigma m y$ where we can choose $m=1$ if and only if $\discr(W)$ is trivial. If the discriminant is non-trivial then $m$ must be some non-square element of $\F_0$. This makes $\cdot_m$ an example of an isotopy. 
In case U, we may always assume that $m=1$ up to isomorphism, and in case O we may assume that $x\cdot_m y=xym$ where $m\in\F_0$ is possible a non-square.
Because $C=\{c_j\}_{j=1}^{s+1}$ is a collection of non-zero constants we have that $c_j\cdot_m c_j=c_j^\sigma m c_j=c'-a$, which is a square if and only if the discriminant of $W$ is trivial. This means that $\discr(V)=(c'-a)(c')^{s+1}\F^{\times2}$ which follows from \cref{thm:naimarkexistsO}.
Likewise
\[b=(c_j\cdot_m c_k) (c_k\cdot_m c_j)=c_j^\sigma m c_kc_k^\sigma m c_j=c_j^\sigma m c_jc_k^\sigma m c_k=(c'-a)^2.\]
In this case we have that $(s+1)(c'-a)=c'$ which rearranges to give $a=s(c'-a)$, meaning that $a=0$ if and only if $\Char \F$ divides $s$.
We can also directly compute $c'=\sum_{j=1}^{s+1}c_j^\sigma m c_j=(s+1)(c'-a)$ meaning such a Naimark complement
could only exist when $\Char\F$ does not divide $s+1$ as $c'\neq 0$. This proves the \textit{only if} direction of the 
following lemma.

\begin{lem}(Example 2.16 in \cite{greaves_frames_2021-1})
    \label{lem:simplexexistchar}
    Fix a finite field $\F$ (in Case O or U). A regular $s$-simplex $\Phi:\F^{s+1}\ra V$ for some $s$-dimensional space $V$
    exists if and only if $\Char\F$ does not divide $s+1$.
\end{lem}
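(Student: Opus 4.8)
The excerpt already establishes the ``only if'' direction, so the plan is to prove the converse: assuming $\Char\F \nmid s+1$, construct a regular $s$-simplex.

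First I would produce a trivial ETF in dimension one and take its Naimark complement. Let $C:\F^{s+1}\to\F$ be the synthesis operator of the all-ones list $(1,\dots,1)$, where the target $\F$ is regarded as a $1$-dimensional non-isotropic space with scalar product $\ip{x}{y}=x^\sigma y$ (which has trivial discriminant in either Case). A direct computation gives $C^\dagger x = x\cdot(1,\dots,1)$ and hence $CC^\dagger=(s+1)\,\mathrm{id}_\F$, and this is the zero operator if and only if $\Char\F\mid s+1$; so under our hypothesis $C$ is a genuine $(s+1)$-tight frame, indeed a $(1,1,s+1)$-ETF, for the $1$-dimensional space $\F$.

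Next I would invoke \cref{thm:naimarkexistsU} (when $\F$ is in Case U) or \cref{thm:naimarkexistsO} (when $\F$ is in Case O), which---since $s+1\neq 0$---yields a Naimark complement $\Phi:\F^{s+1}\to W$. By \cref{lem:naimarksarecomplentary}, $\image\Phi$ is a non-isotropic subspace of dimension $(s+1)-1=s$, and, because $C$ is a $(1,1,s+1)$-ETF, $\Phi:\F^{s+1}\to\image\Phi$ is a $\big((s+1)-1,\,1,\,s+1\big)=(s,1,s+1)$-ETF. Since $\Phi$ consists of $s+1$ vectors and is an ETF for an $s$-dimensional non-isotropic space, it is by definition a regular $s$-simplex, which finishes the ``if'' direction.

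The only point needing care is the verification that $C$ really is a $c$-tight frame, i.e.\ that $c=s+1\neq 0$; everything afterward (the existence of the Naimark complement and the bookkeeping in \cref{lem:naimarksarecomplentary}) is automatic once $c\neq 0$, and $c\neq 0$ is precisely the divisibility hypothesis. As an alternative I could bypass Naimark complements and instead check, via \cref{prop:whenGisETFUnitary} or \cref{prop:whenGisETF}, that $G=(s+1)I_{s+1}-J_{s+1}$ is the Gram matrix of an $(s,1,s+1)$-ETF---verifying $G=G^*$, $G^2=(s+1)G$, $G_{ii}=s$, $G_{ij}G_{ji}=1$, and $\rk(G)=s$---but in Case O that route carries the extra burden of matching the determinant of a basic submatrix of $G$ to the discriminant of the ambient orthogonal space, a bookkeeping task the Naimark argument avoids.
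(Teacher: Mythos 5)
Your argument is correct and is essentially the paper's own proof: the paper likewise builds a collection of constants $C=(c_j)_{j=1}^{s+1}$ with common nonzero norm $a$ (of which your all-ones choice with $a=1$ is a special case), observes that $C$ is an $(a,a^2,(s+1)a)$-ETF for a one-dimensional space with $(s+1)a\neq 0$ precisely under the divisibility hypothesis, and then takes the Naimark complement via \cref{lem:naimarksarecomplentary} to obtain the regular $s$-simplex. The only difference is cosmetic specialization to $c_j=1$, plus your aside about the Gram-matrix route, which the paper does not pursue.
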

\begin{proof}
    All that remains is to prove the \emph{if} direction. Assume that $\Char\F$ does not divide $s+1$ for some fixed integer $s$.
    In this case we can construct a collection of constants $C=(c_j)_{j=1}^{s+1}$ such that $a:=c_j^\sigma c_j\neq 0$ 
    and is equal for all $j$. This means $C$ is an $(a,a^2, (s+1)a)$-ETF. By construction $(s+1)a\neq 0$; so, there 
    exists a Naimark complement which is an $((s+1)a-a,a^2,(s+1)a)$-ETF $\Phi:\F^{s+1}\ra V$ where $\dim(V)=s$. Thus, there exists
    a regular $s$-simplex.
\end{proof}

\begin{lem}\label{lem:simplexfull}
    Let $\Phi=(\phi_j)_{j=1}^{s+1}$ be a regular $s$-simplex for $V$ an 
    $s$ dimensional space. Then
    the vectors are minimally dependent; i.e., they are dependent but any proper subset is independent.
\end{lem}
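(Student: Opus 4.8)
The plan is to exploit the fact that $\Phi$ consists of $s+1$ vectors spanning an $s$-dimensional space, so the kernel of the synthesis operator $\Phi\colon\F^{s+1}\to V$ is exactly one-dimensional. Concretely, since $\Phi$ is a frame for the $s$-dimensional space $V$, we have $\rk(\Phi)=s$, hence $\dim\ker\Phi=(s+1)-s=1$; pick a nonzero $x=(x_1,\dots,x_{s+1})\in\ker\Phi$, so that $\sum_j x_j\phi_j=0$ is (up to scaling) the unique linear dependence among the vectors. The claim ``minimally dependent'' then amounts to showing that \emph{every} coordinate $x_j$ is nonzero: if some $x_j=0$, then the dependence lives among the remaining $s$ vectors, but if every $x_j\neq 0$ then deleting any $\phi_j$ leaves a set whose only candidate dependence vector would have had a nonzero $j$-th coordinate, a contradiction, so each proper subset is independent.

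So the real content is: show $x_j\neq 0$ for all $j$. First I would pair the dependence relation with a single vector using the scalar product. Applying $\ip{\phi_k}{-}$ to $\sum_j x_j\phi_j=0$ and using additivity/linearity gives $\sum_j x_j\ip{\phi_k}{\phi_j}=0$, i.e.\ $(\Phi^\dagger\Phi)\,x^{\sigma}=0$ after the appropriate conjugation bookkeeping — more cleanly, $x\in\ker\Phi=\ker(\Phi^\dagger\Phi)$ by \cref{lem:kerwhenframe}, so $(\Phi^\dagger\Phi)x=0$. Now use equiangularity: the $k$-th row of $\Phi^\dagger\Phi$ has diagonal entry $a$ and off-diagonal entries $\ip{\phi_k}{\phi_j}$ with $\ip{\phi_k}{\phi_j}\ip{\phi_j}{\phi_k}=b\neq 0$ (recall $b\neq 0$ since $s>1$, by \cref{rem:bneq0}), so in particular every off-diagonal entry $\ip{\phi_k}{\phi_j}$ is nonzero. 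The equation $(\Phi^\dagger\Phi)x=0$ then reads, for each $k$,
\[
a\,x_k + \sum_{j\neq k}\ip{\phi_k}{\phi_j}\,x_j = 0 .
\]
Suppose $x_k=0$ for some $k$. Then $\sum_{j\neq k}\ip{\phi_k}{\phi_j}x_j=0$ with all coefficients nonzero. I would like to conclude all $x_j=0$, contradicting $x\neq 0$. To do this I would bring in the Naimark complement: $\Phi$ has a Naimark complement $C=(c_j)_{j=1}^{s+1}\colon\F^{s+1}\to W$ with $W$ one-dimensional (this is the computation already carried out in the excerpt just before \cref{lem:simplexexistchar}), so by \cref{lem:naimarksarecomplentary}, $\image\Psi^\dagger=(\image\Phi^\dagger)^\perp$ inside $\F^{s+1}$; since $\ker\Phi=(\image\Phi^\dagger)^{\perp}$ as well (standard adjoint identity, $\F^{s+1}$ non-isotropic), $\ker\Phi=\image C^\dagger$ is spanned by the single vector $C^\dagger(w)$ for any generator $w$ of $W$, whose $j$-th coordinate is $\ip{c_j}{w}=c_j^\sigma m w$, a \emph{nonzero} scalar times $w$ since each $c_j\neq 0$. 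Hence the spanning vector of $\ker\Phi$ has all coordinates nonzero; as $x$ is a scalar multiple of it, $x_j\neq 0$ for all $j$, which is exactly what we needed.

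Assembling: a nonzero $x\in\ker\Phi$ has all entries nonzero, so $\sum_j x_j\phi_j=0$ is a genuine dependence involving all $s+1$ vectors, establishing dependence; and for any $k$, the set $(\phi_j)_{j\neq k}$ is independent, since a dependence among them would give a nonzero element of $\ker\Phi$ with $k$-th coordinate zero, impossible. The main obstacle is the middle step — ruling out a vanishing coordinate — and the cleanest route is the Naimark-complement argument above rather than trying to directly solve the linear system $(\Phi^\dagger\Phi)x=0$ by hand; one should double-check that $W$ is genuinely one-dimensional and $C$ genuinely has all nonzero entries (it does: $C$ is an ETF, so $\ip{c_j}{c_j}=c'-a$, and $c'-a=a/s\neq 0$ exactly when $\Char\F\nmid s$ — but even if $c'-a$ could vanish one needs the $c_j$ themselves nonzero, which holds because the Gram matrix of $C$ is nonzero by \cref{lem:nondegiff}). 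I would state the short Naimark fact as a lemma if it is not already available, but the excerpt's pre-\cref{lem:simplexexistchar} discussion already supplies everything needed.
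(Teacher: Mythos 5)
Your proposal is correct and follows essentially the same route as the paper's proof: both identify $\ker\Phi$ with $\image C^\dagger=(\image\Phi^\dagger)^\perp$ via the Naimark complement $C$, which is a sequence of nonzero scalars, so the (one-dimensional) space of dependence relations is spanned by a vector with all coordinates nonzero. The only difference is that your initial detour through $(\Phi^\dagger\Phi)x=0$ is not needed once the Naimark argument is in place, and note that $c'-a\neq 0$ follows directly from $b=(c'-a)^2\neq 0$ for $s>1$ (\cref{rem:bneq0}).
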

We could prove the lemma using matroid theory, but we use the results from this paper.
\begin{proof}
By the work above, any simplex $\Phi$ has a Naimark complement $\Psi$ which is a sequence of scalars with the same modulus.  Applying $\image\Psi^\dagger = (\image\Phi^\dagger)^\perp$ from \cref{lem:naimarksarecomplentary}, we see that any linear combination of the columns of $\Phi$ that is equal to $0$ has coefficients a scaling of the elements of $\Psi$.  Thus, there is no way to make a linear combination with a non-empty strict subset of the $s+1$ vectors in $\Phi$ be equal to zero.
\end{proof}

Let $\Phi:\F^n\ra W$ be an $(a,b,c)$-ETF for some $d$-dimensional space. We say a collection of $s+1$ vectors
$\Phi|_\kappa:=(\phi_j)_{j\in\kappa}$ where $\kappa\se[n]$($|\kappa|=s+1$) 
is a \textbf{sub-ETF} if it forms an $(a,b,c')$-ETF for $\image\Phi|_\kappa$. The existence of sub-ETFs in SICs are analyzed in~\cite{ABDF17,DBBA2013} to make progress on solving Zauner's conjecture.
Furthermore, if $\image\Phi|_\kappa$ is $s$-dimensional then we say $\Phi$ contains the 
regular $s$-simplex $\Phi|_{\kappa}:=(\phi_j)_\kappa$ in which case $b=(c'-a)^2$.
Notice that because $a$ and $b$ are equal in the entire frame $\Phi$ and in any regular 
$s$-simplex, we have that $(s+1)a=sc'$ where $c'\neq 0$,
and multiplying by $n$ we get $(s+1)dc=nsc'$.
Because a simplex is an ETF we also have that $a(c'-a)=sb$ and so by multiplying by $s$
we get that $a^2=s^2b$.
\begin{ex}
    Consider the unitary geometry $V=\F_{5^2}^3$ where $\F_{5^2}=\F_5[\alpha]/(\alpha^2+\alpha+1)$ with the standard Hermitian scalar product. Consider the following frame for $V$:
    \[
    \Phi=\begin{bmatrix}
        1  & 1     &  1  & 4 & 4 & 4   &  0 &  0   & 0\\       
        0  & 0     &  0  & 1  & \alpha & \alpha^2 & 4 & 4\alpha  & 4\alpha^2\\
        4 & 4\alpha^2 & 4\alpha & 0  & 0  & 0    & 1  & \alpha^2 & \alpha \\
    \end{bmatrix},
    \]
    where $\Phi$ is a $(2,1,1)$-ETF. Then $\Phi$ contains $12$ regular $2$-simplices which correspond to the Hesse configuration, which encodes the affine geometry of $\bF_3^2$.  There are no other simplices of any other size. $\Phi$ is a finite field analog of the Hesse SIC for $\C^3$~\cite{Hugh07}.
\end{ex}

\begin{prop}
    Fix $s>1$
    Let $\Phi=(\phi_j)_{j=1}^n$ be an $(a,b,c)$-ETF for $V$, a $d$-dimensional non-isotropic space. $\Phi$ contains a 
    regular $s$-simplex $\Phi|_\kappa:=(\phi_j)_{j\in\kappa}$, an $(a,b,c')$-ETF for $\image\Phi|_{\kappa}$ only if
    $a^2=s^2b$. 
    This equation is equivalent to $\frac{a^2}{b}=s^2$ and implies $(n-d)s^2\equiv d(n-1)$.

    \noindent If $\Char\F$ does not divide $sd(n-1)$ then $\frac{n-d}{d(n-1)}\equiv \frac{1}{s^2}$
\end{prop}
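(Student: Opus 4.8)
The plan is to reduce the statement to the two universal ETF identities \eqref{eq:nadc} and \eqref{eq:acan1b}, applied both to the ambient frame $\Phi$ and to the sub-ETF $\Phi|_\kappa$. Suppose $\Phi|_\kappa$ is a regular $s$-simplex, i.e., an $(a,b,c')$-ETF on $s+1$ vectors for the $s$-dimensional space $\image\Phi|_\kappa$. Since $s>1$ we have $s+1>s$, so \cref{rem:bneq0} applied to $\Phi|_\kappa$ gives $b\ne 0$; this will license the cancellations below.

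First I would instantiate \eqref{eq:nadc} for $\Phi|_\kappa$, replacing $(n,d,c)$ by $(s+1,s,c')$, to get $(s+1)a=sc'$, and \eqref{eq:acan1b} similarly to get $a(c'-a)=sb$. Neither identity needs a hypothesis on $\Char\F$ (one is the trace relation, the other comes from the same computation that expands $(\Phi|_\kappa^\dagger\Phi|_\kappa-aI)^2$). Multiplying the second identity by $s$ and substituting $sc'=(s+1)a$ from the first gives
\[
s^2 b \;=\; s\cdot a(c'-a)\;=\;a\bigl(sc'-sa\bigr)\;=\;a\bigl((s+1)a-sa\bigr)\;=\;a^2,
\]
which is the asserted equation $a^2=s^2b$; since $b\ne 0$, dividing yields $a^2/b=s^2$.

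Next I would invoke the always-valid form of the Welch bound, $a^2(n-d)=d(n-1)b$, for the full ETF $\Phi$ (noted in the excerpt just after \eqref{eq:welch}). Substituting $a^2=s^2b$ into it gives $s^2 b(n-d)=d(n-1)b$, and cancelling $b\ne 0$ leaves $(n-d)s^2\equiv d(n-1)$ in $\F$. Finally, the hypothesis $\Char\F\nmid sd(n-1)$ makes $s$, $d$, and $n-1$ all nonzero in $\F$, hence $s^2$ and $d(n-1)$ are units; dividing the previous congruence by $s^2 d(n-1)$ then yields $\frac{n-d}{d(n-1)}\equiv\frac{1}{s^2}$.

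The argument is essentially bookkeeping; the only points requiring care are (i) checking that \eqref{eq:nadc} and \eqref{eq:acan1b} transfer verbatim to the sub-ETF with parameters $(s+1,s,c')$, and (ii) ensuring $b\ne 0$ so that every cancellation is valid --- this is precisely where the hypothesis $s>1$ is used, via \cref{rem:bneq0}. I do not anticipate any genuine obstacle beyond this.
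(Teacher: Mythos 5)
Your proposal is correct and follows essentially the same route as the paper: the paper derives $a^2=s^2b$ in the discussion immediately preceding the proposition from exactly the two identities $(s+1)a=sc'$ and $a(c'-a)=sb$ applied to the sub-ETF, and its proof then combines this with the characteristic-free Welch relation $(n-d)a^2=d(n-1)b$ and cancels $b\neq 0$, just as you do. Your explicit justification that $b\neq 0$ via \cref{rem:bneq0} matches the assumption the paper records at the start of the section.
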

\begin{proof}
    We have already shown that if a regular $s$-simplex exists it would satisfy $\frac{a^2}{b}=s^2$.
    Notice that this in combination with the 
    parameters of the original frame which satisfy $(n-d)a^2=d(n-1)b$ would give us 
    $(n-d)s^2\equiv d(n-1)$.
    If $\Char\F$ does not divide $sd(n-1)$ we can 
    rearrange this to get $\frac{n-d}{d(n-1)}=\frac{1}{s^2}$.
\end{proof}

This also suggests that in the case where $a=0$, which could only happen when $\Char\F$ divides $s$, we would also have that $\Char\F$ divides $d$ or $n-1$.

\subsection{ETFs with Respectable Characteristics}
Throughout this section we will assume that $\Char\F\nmid s$ and therefore any regular simplex would have $a\neq 0$. 
Let $\Phi=(\phi_j)_{j=1}^n$ be an $(a,b,c)$-ETF for a $d$-dimensional non-isotropic space $V$, and assume that $\Phi$ contains a 
regular $s$-simplex $(\phi_j)_{j\in\kappa}$. In this case we have $\frac{s+1}{s}a=c'$ and thus $c'-a=\frac{a}{s}$. From before we have $\discr(V)=(c'-a)(c')^{s+1}\F^{\times 2}$, which means
if $s$ is odd then $(c')^{s+1}$ is a square meaning $\discr(\image\Phi|_\kappa)=\frac{a}{s}\F^{\times 2}$. If $s$ is even then $(c')^{s+1}$ is a square if and only if $c'$ is. Meaning $\discr(\image\Phi|_\kappa)=(s+1)\F^{\times 2}$.
More generally, \[\discr(\image\Phi|_\kappa)=\left(\frac{a}{s}\right)^s(s+1)^{s+1}\F^{\times 2}.\]

Over characteristic zero, saturating the Welch bound is sufficient to be an ETF.  We showed in \cref{thm:theresult} that the critical additional information needed over positive characteristic involves sums of triple products.  We have a similar result for when an ETF contains a regular $s$-simplex, which may be seen as a corollary to  \cref{thm:theresult}.

\begin{cor}
    Consider a field $\F$ such that $\Char\F>s+1$. Let $\Phi=(\phi_j)_{j=1}^n$ be an $(a,b,c)$-ETF for a $d$-dimensional non-isotropic space $V$. $\Phi$ contains a 
    regular $s$-simplex $\Phi|_{\kappa}:=(\phi_j)_{j\in\kappa}$ if and only if $|\kappa|=s+1$, $a^2=s^2b$, $\dim(\image\Phi|_{\kappa})=s$ and
    \begin{equation}
        \label{eq:2.21.4.5.6455}
        \sum_{\ell\in\kappa}\Delta(\phi_j,\phi_k,\phi_\ell)=\frac{(s+1)ab}{s}=\frac{(s+1)a^3}{s^3}
    \end{equation}
    for all $j\neq k$ in $\kappa$
\end{cor}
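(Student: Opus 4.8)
The plan is to deduce this directly from \cref{thm:theresult} applied to the subcollection $\Phi|_\kappa$, supplemented by the simplex identities already worked out in this section. The starting observation is that, because $\Phi$ is $(a,b)$-equiangular, any subcollection $\Phi|_\kappa$ with $|\kappa| = s+1$ is automatically an $(a,b)$-equiangular system of $s+1$ vectors in the non-isotropic space $V$; the only datum not forced by $\Phi$ is $\dim(\image\Phi|_\kappa)$, which is why it appears explicitly among the listed conditions.

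For the forward direction I would assume $\Phi|_\kappa$ is a regular $s$-simplex, i.e., an $(a,b,c')$-ETF for the $s$-dimensional space $\image\Phi|_\kappa$, so that $|\kappa| = s+1$ and $\dim(\image\Phi|_\kappa) = s$ by definition. Specializing \eqref{eq:nadc} and \eqref{eq:acan1b} to this ETF (equivalently, reusing the computation immediately preceding \cref{lem:simplexexistchar}) gives $c' = \frac{(s+1)a}{s}$, hence $c'-a = \frac{a}{s}$ and $a^2 = s^2 b$; and feeding $c'$ into \eqref{eq:sumoftripleprods} for $\Phi|_\kappa$ gives $\sum_{\ell\in\kappa}\Delta(\phi_j,\phi_k,\phi_\ell) = c'b = \frac{(s+1)ab}{s}$, which becomes $\frac{(s+1)a^3}{s^3}$ upon substituting $b = a^2/s^2$.

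For the converse I would assume $|\kappa| = s+1$, $a^2 = s^2 b$, $\dim(\image\Phi|_\kappa) = s$, and the sum-of-triple-products identity, and then invoke \cref{thm:theresult} for $\Phi|_\kappa$ with the roles of $n$ and $d$ played by $s+1$ and $s$: the Welch-type relation $(s+1-1)b = \frac{(s+1)-s}{s}a^2$, i.e.\ $s^2 b = a^2$, and the triple-product relation $\sum_{\ell\in\kappa}\Delta(\phi_j,\phi_k,\phi_\ell) = \frac{(s+1)ab}{s}$ are exactly the assumed identities, so the conclusion is that $\Phi|_\kappa$ is a $\frac{(s+1)a}{s}$-tight frame for $\image\Phi|_\kappa$, i.e., an $(a,b,\frac{(s+1)a}{s})$-ETF for an $s$-dimensional space, which is precisely a regular $s$-simplex. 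Before applying the theorem I would verify its hypotheses: $a \neq 0$ (otherwise $b = a^2/s^2 = 0$, the Gram matrix of $\Phi|_\kappa$ vanishes, and $\image\Phi|_\kappa = 0$, contradicting $s > 1$), $\Char\F > \dim(\image\Phi|_\kappa) = s$, and $\frac{(s+1)a}{s} \neq 0$; all of these follow from $\Char\F > s+1$, which makes $s$ and $s+1$ invertible in $\F$.

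I expect the only real obstacle to be bookkeeping rather than a genuine difficulty: one must instantiate the parameters of \cref{thm:theresult} correctly for the subcollection and track the characteristic conditions --- in particular, note that the hypothesis $\Char\F > s+1$, rather than the weaker $\Char\F > s$ needed for \cref{thm:theresult} alone, is what also guarantees $c' = \frac{(s+1)a}{s}$ is well defined and nonzero (cf.\ \cref{lem:simplexexistchar}). All remaining content is algebraic manipulation of $na = dc$, $a(c-a) = (n-1)b$, and \eqref{eq:sumoftripleprods} specialized to $n = s+1$, $d = s$.
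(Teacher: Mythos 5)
Your proposal matches the paper's proof essentially verbatim: the forward direction reads off the simplex identities already derived in this section (in particular $c'=\frac{s+1}{s}a$, $a^2=s^2b$, and \eqref{eq:sumoftripleprods} applied to $\Phi|_\kappa$), and the converse is a direct application of \cref{thm:theresult} with the roles of $n$ and $d$ played by $s+1$ and $s$. Your extra verification of the hypotheses of \cref{thm:theresult} is a welcome addition the paper omits, though one small slip: a vanishing Gram matrix forces $\image\Phi|_\kappa$ to be totally isotropic, not zero --- the intended contradiction (that such an image cannot carry an ETF, by \cref{lem:nondegiff}) still goes through.
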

\begin{proof}
    ($\Rightarrow$) First, we assume that $\Phi$ contains a regular $s$-simplex $\Phi|_{\kappa}:=(\phi_j)_{j\in\kappa}$. From the 
    work above we know that $|\kappa|=s+1$ where $a^2=s^2b$, and $\dim(\image\Phi|_{\kappa})=s$. Likewise because 
    $(\phi_j)_{j\in\kappa}$ is an $(a,b,\frac{s+1}{s}a)$-ETF we know that 
    \[\sum_{\ell\in\kappa}\Delta(\phi_j,\phi_k,\phi_\ell)=\frac{(s+1)ab}{s}\]
    for all $j\neq k$ in $\kappa$.

    \noindent ($\Leftarrow$) Now consider a sub-collection of vectors $\Phi|_{\kappa}=(\phi_j)_{j\in\kappa}$ 
    where $|\kappa|=s+1$, $a^2=s^2b$,
    $\dim(\image \Phi|_{\kappa})=s$, and \eqref{eq:2.21.4.5.6455} is satisfied. Then by 
    \cref{thm:theresult} we have that $(\phi_j)_{j\in\kappa}$ is an ETF and therefore a regular $s$-simplex
\end{proof}

We note that a similar corollary follows from \cref{thm:theresult2}.
Let $\Phi=(\phi_j)_{j=1}^{s+1}$ be a regular $s$-simplex, an $(a,b,c')$-ETF, with $s>1$. 
As before we have a Naimark complement $C:\F^{s+1}\ra \F$, 
a $(c'-a,(c'-a)^2,c')$-ETF.
Because $C$ is a collection of constants we have that that the triple products of $C$ satisfy 
\[(c_j\cdot_m c_k)(c_k\cdot_m c_\ell)(c_\ell\cdot_m c_j)=c_j^\sigma m c_kc_k^\sigma m c_\ell c_\ell^\sigma m c_j=c_j^\sigma m c_jc_k^\sigma m c_kc_\ell^\sigma m c_\ell=(c'-a)^3.\]
Likewise, because $\Phi^\dagger \Phi= c'I-C^\dagger C$ we know that
$\ip{\phi_j}{\phi_k}=-(c_j\cdot_m c_k)$ for $j\neq k$ and $\ip{\phi_j}{\phi_j}=c'-(c_j\cdot_m c_j)=c'-(c'-a)=a$. 
Putting these together we can see that for distinct $j,k,\ell$ we have
\[\Delta(\phi_j,\phi_k,\phi_\ell)=-(c'-a)^3\]
where $c'-a\neq 0$.
If $\Char\F$ does not divide $s$ we know that $\Phi$ is an $(a,b,\frac{s+1}{s}a)$-ETF where $-(c'-a)^3=-\left(\frac{s+1}{s}a-a\right)^3=-\frac{a^3}{s^3}$. So if an ETF contains a regular $s$-simplex, 
then the triple products of distinct vectors of the simplex would all be equal to $-(c'-a)^3=-\frac{a^3}{s^3}$.
Similar to the real and complex cases, shown by \cite{FICKUS201898}, we will see that simplices are more or less determined 
by their triple products all being equal in this way, 
with a few annoying caveats.

\begin{thm}
    \label{thm:iffsimplex}
    Let $\Phi=(\phi_j)_{j=1}^n$ be an $(a,b,c)$-ETF for $d$-dimensional space $V$ over the field $\F_q$, 
    such that $d<n$ and $\Char\F_q$ does not divide $s(s+1)$.
    Then $\kappa\se [n]$ of size $s+1$ gives a regular $s$-simplex $(\phi_j)_{j\in\kappa}$ if and only if 
    $a^2=s^2b$,
    \[\Delta(\phi_j,\phi_k,\phi_\ell)=-\frac{a^3}{s^3}\neq 0\]
    for all distinct $j,k,\ell\in \kappa$, and 
    \[\sum_{j\in\kappa}\Delta(\phi_\ell,\phi_k,\phi_j)=\frac{s+1}{s}ab\]
    for a fixed $\ell\in\kappa$ and all $k\not\in\kappa$.
\end{thm}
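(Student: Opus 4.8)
The plan is to reduce Theorem~\ref{thm:iffsimplex} to the already-proven characterization of sub-ETFs via \cref{thm:theresult}, treating the ``$(\Leftarrow)$'' direction as the substantive one and the ``$(\Rightarrow)$'' direction as a recap of the computations preceding the statement.

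For the forward direction, suppose $\kappa\subseteq[n]$ with $|\kappa|=s+1$ gives a regular $s$-simplex $(\phi_j)_{j\in\kappa}$. Since a simplex is an ETF, we already established in the discussion before the theorem that $a^2=s^2b$ and that, since $\Char\F\nmid s$, the simplex is an $(a,b,\tfrac{s+1}{s}a)$-ETF. The Naimark-complement argument (\cref{lem:naimarksarecomplentary}, \cref{thm:naimarkexistsO,thm:naimarkexistsU}) shows the complement $C$ is a sequence of $s+1$ scalars of common modulus $c'-a=\tfrac{a}{s}$, and then $\Delta(\phi_j,\phi_k,\phi_\ell)=-(c'-a)^3=-\tfrac{a^3}{s^3}$ for all distinct $j,k,\ell\in\kappa$; in particular this is nonzero because $a\neq0$ and $s$ is invertible. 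Finally, applying \eqref{eq:sumoftripleprods} \emph{within} the sub-ETF $(\phi_j)_{j\in\kappa}$ (which is a $(\tfrac{s+1}{s}a)$-tight frame for its $s$-dimensional span) gives $\sum_{j\in\kappa}\Delta(\phi_\ell,\phi_k,\phi_j)=c'b=\tfrac{s+1}{s}ab$ for every $\ell\neq k$ in $\kappa$, in particular for the one fixed $\ell$ and all $k\notin\kappa$—wait, here one must be careful: the sum in the theorem statement runs over $k\notin\kappa$, not $k\in\kappa$, so I would note that the hypothesis ``$k\notin\kappa$'' in the sum index is vacuously compatible, or more likely the intended reading is $k\in\kappa\setminus\{\ell\}$; I will state the direction using the sum over $j\in\kappa$ with $k$ ranging appropriately and reconcile the indexing with the precise statement.

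For the converse, assume $a^2=s^2b$, the triple products of distinct vectors in $\kappa$ all equal $-\tfrac{a^3}{s^3}\neq0$, and the sum condition holds. The goal is to invoke \cref{thm:theresult} on the collection $\Psi:=(\phi_j)_{j\in\kappa}$ living in the non-isotropic space $V$, with $n$ replaced by $s+1$ and $d$ replaced by $s$. I must verify the hypotheses of \cref{thm:theresult}: first, $\Psi$ is an $(a,b)$-equiangular system (inherited from $\Phi$); second, $\Char\F>s$—here I use $\Char\F\nmid s(s+1)$ together with the standing assumption that we are in Case O or U with $q$ odd, plus the remark that $\Char\F$ must actually exceed $s$, which needs justification or an appeal to the hypothesis $\Char\F\nmid s(s+1)$ being strengthened in context; third, $\tfrac{(s+1)a}{s}\neq0$, which holds since $a\neq0$ and $\Char\F\nmid s(s+1)$; fourth, the relative-bound equation $(s+1-1)b=\tfrac{(s+1)-s}{s}a^2$, i.e. $sb=\tfrac{1}{s}a^2$, i.e. $a^2=s^2b$, which is exactly our hypothesis; and fifth, the triple-product-sum condition $\sum_{\ell\in\kappa}\Delta(\phi_j,\phi_k,\phi_\ell)=\tfrac{(s+1)a}{s}\cdot\tfrac{b}{1}\cdot\tfrac{1}{?}$—precisely, \cref{thm:theresult} requires $\sum_{\ell\in\kappa}\Delta(\phi_j,\phi_k,\phi_\ell)=\tfrac{(s+1)ab}{s}$ for \emph{all} $j\neq k$ in $\kappa$. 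This last point is the crux: the hypothesis only gives the sum for one fixed $\ell$ and varying $k\notin\kappa$, so I must bootstrap the full family of sum identities from the equal-triple-product condition. The key computation is that for distinct $j,k\in\kappa$,
\[
\sum_{\ell\in\kappa}\Delta(\phi_j,\phi_k,\phi_\ell)
= \Delta(\phi_j,\phi_k,\phi_j)+\Delta(\phi_j,\phi_k,\phi_k)+\sum_{\substack{\ell\in\kappa\\ \ell\neq j,k}}\Delta(\phi_j,\phi_k,\phi_\ell)
= \ip{\phi_j}{\phi_k}b\,\eta + ab + (s-1)\Bigl(-\tfrac{a^3}{s^3}\Bigr)\cdot(\text{phase correction}),
\]
but since $\Delta(\phi_j,\phi_k,\phi_\ell)=\ip{\phi_j}{\phi_k}\ip{\phi_k}{\phi_\ell}\ip{\phi_\ell}{\phi_j}$ and we know \emph{each such product equals $-\tfrac{a^3}{s^3}$} exactly (not just up to phase, by hypothesis), the sum over $\ell\neq j,k$ is simply $(s-1)(-\tfrac{a^3}{s^3})$, the $\ell=k$ term is $\ip{\phi_j}{\phi_k}\ip{\phi_k}{\phi_k}\ip{\phi_k}{\phi_j}=ab$, and the $\ell=j$ term is $\ip{\phi_j}{\phi_k}\ip{\phi_k}{\phi_j}\ip{\phi_j}{\phi_j}=ab$; hence the sum is $2ab+(s-1)(-\tfrac{a^3}{s^3})$, which using $a^2=s^2b$ becomes $2ab-(s-1)\tfrac{ab}{s}=ab\bigl(2-\tfrac{s-1}{s}\bigr)=ab\cdot\tfrac{s+1}{s}$, independent of $j,k$. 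This shows the full sum condition holds automatically for \emph{all} $j\neq k$, so the single-$\ell$ hypothesis in the statement is in fact redundant given the equal-triple-product hypothesis—which is presumably why the theorem states it only for a fixed $\ell$ (it follows regardless, and conversely must hold if the simplex is to exist). With all hypotheses of \cref{thm:theresult} verified, $\Psi$ is an $(a,b,\tfrac{s+1}{s}a)$-ETF for $\image\Psi$; and finally $\dim\image\Psi=s$ follows because \cref{thm:theresult} produces a $\tfrac{(s+1)a}{s}$-tight frame for its $s$-dimensional span (the ``$d$'' in the application is $s$ by construction), making $\Psi$ a regular $s$-simplex.

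\textbf{The main obstacle} I expect is the bookkeeping around the index $\ell$ versus $k$ in the sum condition and making the reduction to \cref{thm:theresult} airtight regarding which parameter plays the role of ``$d$''—one must argue that $\dim(\image\Phi|_\kappa)=s$ is \emph{forced} (not assumed) by the combination $a^2=s^2b$ plus the tightness output of \cref{thm:theresult}, using that $\Char\F\nmid(s+1)$ so that the trace identity $\tr(\Psi^\dagger\Psi)=(s+1)a=\dim\cdot\tfrac{(s+1)a}{s}$ pins down $\dim=s$ as an integer (the argument of \cref{thm:buildingcharacter}). A secondary subtlety is confirming $\Char\F>s$ from the stated hypothesis $\Char\F\nmid s(s+1)$: strictly this only prevents $\Char\F$ from dividing $s$ or $s+1$, not from being, say, a prime between $s/2$ and $s$; so I would either tacitly use the stronger hypothesis $\Char\F>s+1$ present in the preceding corollary, or observe that \cref{thm:theresult} as applied needs $\Char\F>\dim=s$ and carry that as an explicit hypothesis, flagging it as consistent with the surrounding standing assumptions.
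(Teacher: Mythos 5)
There is a genuine gap, and it originates in a misreading of the third hypothesis. The sum condition $\sum_{j\in\kappa}\Delta(\phi_\ell,\phi_k,\phi_j)=\frac{s+1}{s}ab$ is stated for $k\notin\kappa$, i.e., it constrains the scalar products between the candidate simplex and the \emph{rest} of the ETF. You reinterpret it as a condition with $k\in\kappa\setminus\{\ell\}$, show (correctly) that this version follows from the equal-triple-product hypothesis together with $a^2=s^2b$, and conclude the condition is redundant. It is not: the $k\notin\kappa$ instances carry information that the Gram matrix of $\Phi|_\kappa$ alone cannot. Concretely, conditions (1) and (2) determine $\Phi|_\kappa^\dagger\Phi|_\kappa$ completely (it equals $\frac{s+1}{s}aI-C^\dagger C$ for an explicit rank-one $C^\dagger C$, hence has rank $s$), but they do not determine $\rk(\Phi|_\kappa)$: the $s+1$ vectors could still be linearly independent, spanning an $(s+1)$-dimensional \emph{isotropic} subspace of $V$, in which case $\Phi|_\kappa$ is not a frame for its span and not a regular $s$-simplex (compare \cref{ex:ranknotiff} and \cref{lem:nondegiff}). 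This is precisely why your reduction to \cref{thm:theresult} cannot close: that theorem takes $d=\dim(\image\Phi|_\kappa)=s$ as a hypothesis, and your proposed fix --- deducing $\dim=s$ from the trace identity for a tight frame --- is circular, since the tightness is the conclusion you are trying to reach.

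The paper's proof avoids this by constructing the scalar Naimark complement $C=(c_j)_{j\in\kappa}$ explicitly from the triple products and then proving the actual linear dependence $\Phi|_\kappa C^\dagger=\sum_{j\in\kappa}c_j^\sigma m c_\ell\phi_j=0$. To do so it pairs this vector against \emph{every} $\phi_k$ with $k\in[n]$ and invokes non-degeneracy of $V=\image\Phi$: for $k\in\kappa$ the required identity $\sum_{j\in\kappa}\Delta(\phi_\ell,\phi_k,\phi_j)=\frac{s+1}{s}ab$ follows from hypotheses (1) and (2) (your bootstrap computation is essentially this step), but for $k\notin\kappa$ it is exactly hypothesis (3). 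This dependence forces $\rk(\Phi|_\kappa)=s=\rk(\Phi|_\kappa^\dagger\Phi|_\kappa)$, hence non-isotropy of the image, and the simplex structure follows. A secondary point you correctly flag but cannot resolve within your framework: \cref{thm:theresult} needs $\Char\F>s$, which is strictly stronger than the theorem's stated hypothesis $\Char\F\nmid s(s+1)$; the paper's direct Naimark construction only needs the latter, which is another sign the intended argument does not route through \cref{thm:theresult}. Your forward direction is essentially correct once the indexing is read as written: for $k\notin\kappa$ the sum identity follows from applying tightness of the sub-ETF on its span, exactly as in \eqref{eq:sumoftripleprods}.
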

\begin{proof}
    We have already shown most of the ($\Rightarrow$) direction; so, we will only show the ($\Leftarrow$) direction. Assume that $\Delta(\phi_j,\phi_k,\phi_\ell)=-\frac{a^3}{s^3}\neq 0$ for all distinct $j,k,\ell\in \kappa$.
    Pick $\alpha \in\F_q^\times$ and $m\in\F_0^\times$ such that $\alpha^\sigma m \alpha =\frac{a}{s}$. Notice that $m$ would be a square depending on if $\frac{a}{s}$ is, and if $\frac{a}{s}$ was a square then we could assume that $m=1$, otherwise $m$ would be a non-square. 
    Fix some $\ell\in \kappa$ and define $C=\{c_j\}_{j\in\kappa}$ such that 
    $c_\ell=\alpha$ and 
    $c_j=\frac{-1}{m\alpha^\sigma}\ip{\phi_{\ell}}{\phi_j}$
    for all $j\neq \ell$. Notice that 
    \[
        c_j^\sigma m c_j=\left. \begin{cases}
            (-1)^2\frac{s}{a}\ip{\phi_{j}}{\phi_{\ell}}\ip{\phi_{\ell}}{\phi_j} & \text{if }j\neq \ell\\
            \alpha^\sigma m \alpha & \text{if }j=\ell\\
        \end{cases}\right\}=\frac{a}{s}
    \]
    and likewise for $j\neq k$
    \[
        c_j^\sigma m c_k=\left. \begin{cases}
            -\ip{\phi_{\ell}}{\phi_k} & \text{if }j= \ell\\
            -\ip{\phi_j}{\phi_{\ell}} & \text{if }k= \ell\\
            \frac{s}{a}\ip{\phi_j}{\phi_{\ell}}\ip{\phi_{\ell}}{\phi_k} & \text{otherwise}\\
        \end{cases}\right\}=-\ip{\phi_j}{\phi_k},
    \]
    where the last line follows from $-\frac{s}{a}\ip{\phi_j}{ \phi_{\ell}}\ip{\phi_{\ell}}{\phi_k}\ip{\phi_k}{\phi_j}=\frac{a^2}{s^2}=b$.
    This shows us that 
    \[\Phi|_\kappa^\dagger\Phi|_\kappa=\frac{s+1}{s}aI-C^\dagger C.\]
    Because $C$ is a sequence of constants it is trivially an $(\frac{a}{s},b,\frac{s+1}{s}a)$-ETF, meaning $C^\dagger C$ has one non-zero eigenvalue 
    equal to $\frac{s+1}{s}a$, with corresponding eigenvector $C^\dagger$.
    From this we can determine that the $\rk(\Phi|_\kappa^\dagger\Phi|_\kappa)=s$.  
    We want to show that $\Phi|_\kappa$ is a Naimark complement of $C$ and so we need to show that $\Phi|_{\kappa}C^\dagger=0$. We being by computing
    
    \[\Phi|_{\kappa}C^\dagger=\sum_{j\in\kappa} c_j^\sigma m\phi_j
    =\sum_{j\in\kappa} c_j^\sigma m c_\ell\phi_j,
    \]
    and we will show that $\sum_{j\in\kappa} c_j^\sigma m c_\ell\phi_j=0$ using the 
    non-degeneracy of $V=\image(\Phi)$.
    So consider a vector $\phi_k\in \Phi$ and compute
    \begin{align*}
        \ip{\phi_k}{\sum_{j\in\kappa} c_j^\sigma m c_\ell\phi_j}&=\sum_{j\in\kappa}c_j^\sigma m c_\ell\ip{\phi_k}{\phi_j}\\
        &=c_\ell^\sigma m c_\ell\ip{\phi_k}{\phi_\ell} -\sum_{j\neq\ell}\ip{\phi_k}{\phi_j}\ip{\phi_j}{\phi_\ell}.
    \end{align*}
   This expression is zero if and only if the following is zero
    \begin{align*}
        c_\ell^\sigma m c_\ell\ip{\phi_\ell}{\phi_k}\ip{\phi_k}{\phi_\ell} -\sum_{j\neq \ell}\ip{\phi_\ell}{\phi_k}\ip{\phi_k}{\phi_j}\ip{\phi_j}{\phi_\ell}
        &=\frac{ab}{s} -\sum_{j\neq\ell}\Delta(\phi_\ell,\phi_k,\phi_j)\\
        &=\frac{ab}{s} -\sum_{j\in\kappa}\Delta(\phi_\ell,\phi_k,\phi_j)+ab \\
        &= 0,
    \end{align*}
    which follows from the initial assumption.
\end{proof}

\subsection{ETFs with Defiant Characteristics}
Now we want to explore the possibility of the characteristic dividing $s$, which leads to some very different behavior. Consider an $(a,b,c)$-ETF which contains a regular $s$-simplex, $\Phi|_\kappa$ which is an $(a,b,c')$-ETF, which means $\Char\F\nmid s+1$.
If $\Char\F|s$, then we would have $a=0$ and $b\neq 0$, and so $na=dc$ implies that $c=0$ or $\Char\F|d$.
Likewise from $a(c-a)=(n-1)b$ we know that $\Char\F | (n-1)$ as $b\neq 0$. Looking at the discriminant
we know that \[\discr(\image\Phi|_\kappa)=(c'-a)(c')^{s+1}\F^{\times2}=(c')^{s+2}\F^{\times2}=(c')^{s}\F^{\times2}.\]
We note $c'$ is intrinsic to the simplex and not the entire frame, meaning it is possible to have an ETF with multiple simplices which have different geometries.
\begin{ex}
    \label{ex:weridsimplices}
    Consider the orthogonal geometry on $\F_3^4$ with the scalar product whose Gram matrix is $\diag(1,1,1,2)$.
    Notice that $-1\equiv 2$ is not a square in $\F_3$ so this is an example of an orthogonal geometry that is not the real model.
    Now consider the frame for $\F_3^4$:
    \[\Phi = \begin{bmatrix}
        0 & 0 & 0 & 0 & 1 & 1 & 1 & 1 & 1 & 1\\  
        0 & 0 & 1 & 1 & 0 & 0 & 1 & 1 & 2 & 2\\  
        1 & 1 & 0 & 0 & 0 & 0 & 1 & 2 & 1 & 2\\  
        1 & 2 & 1 & 2 & 1 & 2 & 0 & 0 & 0 & 0\\
    \end{bmatrix},
    \]
    where $\Phi$ is an $(0, 1, 0)$-ETF of $n=10$ vectors.

    Let $\kappa=\{1,2,3,4\}$. Then $\Phi|_\kappa$ forms a regular $3$-simplex which is an $(0,1,2)$-ETF with $\discr(\image\Phi|_\kappa)=2\F^{\times 2}$.
    Now let $\overline{\kappa}=\{7,8,9,10\}$. Then $\Phi|_{\overline \kappa}$ also forms a regular $3$-simplex, but this one is a $(0,1,1)$-ETF in the real model where $\discr(\image\Phi|_\kappa)=\F^{\times 2}$. In fact there are $30$ regular $3$-simplices, $15$ of which are in the real model and the other $15$ which have a non-square discriminant. The $15$ simplices which are in the real model, whose indices are shown in \eqref{eq:realsimps}, form a $2$-$(10,4,2)$ design,
    
    \begin{align}
        \label{eq:realsimps}
        \begin{split}
            \{
            & \{1,2,7,8\},\{1,2,9,10\},\{1,3,5,7\},\{1,3,6,10\},\{1,4,5,9\},\\
            & \{1,4,6,8\},\{2,3,5,8\},\{2,3,6,9\},\{2,4,5,10\},\{2,4,6,7\},\\
            & \{3,4,7,9\},\{3,4,8,10\},\{5,6,7,10\},\{5,6,8,9\},\{7,8,9,10\}
            \}
        \end{split}
    \end{align}

    Likewise the $15$ simplices with non-square discriminant also form a $2$-$(10,4,2)$ design, whose blocks are shown in \eqref{eq:nonsquaresimps}.

    \begin{align}
        \label{eq:nonsquaresimps}
        \begin{split}
            \{
            & \{1,2,3,4\},\{1,2,5,6\},\{1,3,8,9\},\{1,4,7,10\},\{1,5,8,10\},\\
            & \{1,6,7,9\},\{2,3,7,10\},\{2,4,8,9\},\{2,5,7,9\},\{2,6,8,10\},\\
            & \{3,4,5,6\},\{3,5,9,10\},\{3,6,7,8\},\{4,5,7,8\},\{4,6,9,10\}\}
        \end{split}
    \end{align}
\end{ex}

\section{Connections to Reality}

In this section we draw connections between ETFs in orthogonal geometries with real ETFs.
There are no real ETFs of $n=10$ vectors in $\R^4$;
however, there is a an ETF of $n=10$ vectors in an orthogonal geometry on $\F_3^4$ (cf.\ \cref{ex:weridsimplices}). In this example we also know that in the field extension $\F_{3^2}=\F[x]/(x^2+1)$, $2$ becomes a square; so, the orthogonal geometry over $\F_{3^2}^4$ has a square discriminant.
In fact, \cite{greaves_frames_2021-1} showed that there are infinity many dimensions where maximal ETFs over orthogonal geometries are 
known but it is conjectured that such ETFs don't exist as real ETFs.

In \cite{greaves_frames_2021-1}
the authors showed that the existence of real ETFs imply the existence of ETFs in orthogonal geometries and showed the converse is also true when the characteristic is sufficiently large. 

\begin{thm}(Proposition 3.2 \cite{greaves_frames_2021-1})
    \label{lem:discret-reality}
    Suppose $G=S+aI$ is the Gram matrix of some real $d\times n$ ETF $\Phi$, where $S\in\Z^{n\times n}$ is the signature matrix. Fix a finite field $\F_q$, where $q=p^\ell$ is an odd prime power, with $\delta^2=n-1$ if $n=2d$. Then with
    \[a\equiv\begin{cases}
    \sqrt{\frac{d(n-1)}{n-d}} & n\neq 2d \\ \delta & n=2d\end{cases} \quad \textrm{and} \quad
    c\equiv\begin{cases}
        \sqrt{\frac{n^2(n-1)}{d(n-d)}} & n\neq 2d \\ 2\delta & n=2d\end{cases}
    \]
    the matrix $\bar{G}=\bar{S}+aI$ is the Gram matrix of an $(a,1,c)$-ETF $\Psi$ in an orthogonal geometry on $\F_q^{d'}$ where $d'\leq d$, with equality when $c\neq 0$.
\end{thm}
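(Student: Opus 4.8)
The plan is to verify, for the reduced matrix $\bar G=\bar S+aI$, the characterization of Gram matrices of orthogonal-geometry ETFs in \cref{prop:whenGisETF}, taking $d'$ to be $\rk_{\F_q}(\bar G)$. First I would recall the structure of the signature matrix of a real ETF: $S$ is an integer symmetric matrix with zero diagonal and off-diagonal entries $\pm1$, and since $G=S+aI$ is positive semidefinite of rank $d$, the matrix $S$ has exactly two eigenvalues, $\lambda_1=-a$ of multiplicity $n-d$ and $\lambda_2=c-a$ of multiplicity $d$. Hence $a$ and $c$ are algebraic integers lying in a ring $R\subseteq\R$ of algebraic integers carrying the homomorphism $\pi$, and the closed forms in the statement are exactly the symmetric-function expressions for these eigenvalues coming from $\tr(S)=0$ and $\tr(S^2)=n(n-1)$; in $R$ one has $na=dc$, $a(c-a)=n-1$, and the minimal-polynomial identity $(S+aI)(S-(c-a)I)=0$, i.e.\ $S^2=(c-2a)S+(n-1)I$. (The $n=2d$ case is the degenerate one where $c=2a$ and $a^2=n-1=\delta^2$; the hypothesis on $\delta$ merely fixes which square root $\pi$ selects.) Under the hypotheses, $\pi$ sends $a,c$ to the prescribed elements of $\F_q$, which I continue to denote $a,c$.

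Next I would push everything through $\pi$ entrywise. Symmetry, zero diagonal, and $\pm1$ off-diagonal of $S$ pass to $\bar S$, so $\bar G=\bar G^\intercal$, $\bar G_{ii}=a$, and $\bar G_{ij}^2=1$ for $i\neq j$ --- conditions (i), (v), (vi) of \cref{prop:whenGisETF} with $b=1$. Reducing $S^2=(c-2a)S+(n-1)I$ and expanding,
\[
\bar G^2=(\bar S+aI)^2=\bar S^2+2a\bar S+a^2I=c\,\bar S+\bigl((n-1)+a^2\bigr)I=c(\bar S+aI)=c\,\bar G,
\]
using $\bar S^2=(c-2a)\bar S+(n-1)I$ and $(n-1)+a^2=a(c-a)+a^2=ac$ in $\F_q$; this is condition (iv). So once the rank is pinned down, equipping $\F_q^{d'}$ with the (forced) orthogonal geometry of discriminant $(\det\bar G_b)\F_q^{\times 2}$ makes condition (iii) hold as well, and \cref{prop:whenGisETF} yields an $(a,1,c)$-ETF $\Psi$ for $\F_q^{d'}$.

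It remains to locate $d'=\rk(\bar G)$. Since $G$ has rank $d$ over $R\subseteq\R$, all of its $(d+1)\times(d+1)$ minors vanish in $R$, hence in $\F_q$, so $d'\leq d$. If $c\neq0$ in $\F_q$, reduce $(S+aI)(S-(c-a)I)=0$ to $\bar G\,(\bar S-(c-a)I)=0$; the linear factors $x+a$ and $x-(c-a)$ are coprime over $\F_q$ precisely because their roots differ by $c\neq0$, so $\F_q^{n}=\ker\bar G\oplus\ker(\bar S-(c-a)I)$, whence $\rk(\bar G)+\rk(\bar S-(c-a)I)=n$. Over $\R$ the matrix $S-(c-a)I$ has rank $n-d$ (its kernel is the $d$-dimensional $(c-a)$-eigenspace of $S$), so by the same minor argument $\rk(\bar S-(c-a)I)\leq n-d$; combining, $d'=\rk(\bar G)\geq d$, hence $d'=d$.

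The delicate point is the bookkeeping with $R$ and $\pi$: one must check that every ingredient fed to $\pi$ --- the parameter relations, the minimal-polynomial identity, and the vanishing of the relevant minors of $G$ and of $S-(c-a)I$ --- holds as a genuine identity inside $R$ (or, for the minors, as membership in $\ker\pi$), not merely numerically over $\R$, and that the stated square-root formulas for $a$ and $c$ really are the $\pi$-images of $-\lambda_1$ and $\lambda_2-\lambda_1$ --- the place where the $n=2d$ case has to be split off, since there the denominator $n-d=d$ collapses and $\delta$ is the chosen square root of $n-1$. By contrast, matching the discriminant costs nothing, since \cref{prop:whenGisETF}(iii) only requires that the orthogonal geometry on $\F_q^{d'}$ be chosen so that $\discr(\F_q^{d'})=(\det\bar G_b)\F_q^{\times 2}$, which is always possible.
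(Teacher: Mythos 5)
The paper does not prove this statement itself; it is quoted verbatim as Proposition 3.2 of \cite{greaves_frames_2021-1}, so there is no in-paper proof to compare against. Judged on its own, your argument is correct and complete: the two-eigenvalue structure of $S$ gives the integral identities $na=dc$, $a(c-a)=n-1$, and $S^2=(c-2a)S+(n-1)I$ over a ring $R$ of algebraic integers (with the $n=2d$ conference-matrix case split off, since only there can $a=\sqrt{n-1}$ be irrational --- for $n\neq 2d$ the unequal multiplicities force the eigenvalues to be rational integers, which you could state explicitly); pushing these through $\pi$ verifies conditions (i), (iv), (v), (vi) of \cref{prop:whenGisETF}, condition (iii) is met by choosing the orthogonal geometry on $\F_q^{d'}$ with discriminant $(\det\bar G_b)\F_q^{\times2}$, and the rank bookkeeping via vanishing minors plus the coprime-factor decomposition $\F_q^n=\ker\bar G\oplus\ker(\bar S-(c-a)I)$ when $c\neq 0$ correctly yields $d'\leq d$ with equality when $c\neq0$. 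This is the standard route one would expect the cited reference to take, and I see no gaps.
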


\begin{thm}(Proposition 3.3 \cite{greaves_frames_2021-1})
    If there exists an ETF of $n$ vectors in a $d$-dimensional orthogonal geometry over $\F_{q}$ where $q=p^\ell$ is odd and $p>2n-5$ then there exists a real ETF of $n$ vectors for $\R^d$ with the same $n$ and $d$.
\end{thm}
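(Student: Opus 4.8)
The plan is to start with the given $(a,b,c)$-ETF $\Phi$ of $n$ vectors in an orthogonal geometry over $\F_q=\F_{p^\ell}$ with $n>d$, and to extract from its Gram matrix an honest \emph{integer} Seidel matrix, then recognize that matrix as belonging to a regular two-graph, from which a real ETF is built in the usual way. Since $n>d$, \cref{rem:bneq0} gives $b\neq 0$; after passing to $\F_{q^2}$ if necessary (this changes neither the characteristic nor the hypothesis $p>2n-5$), write $b=\beta^2$ and set $S':=\beta^{-1}(\Phi^\dagger\Phi-aI)$. Every entry of $S'$ lies in $\{0,1,-1\}$, and these are the reductions mod $p$ of $0,\pm 1\in\Z$, so $S'=\overline S$ for a unique symmetric integer matrix $S$ with zero diagonal and $\pm 1$ off the diagonal: a Seidel matrix. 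Substituting $\Phi^\dagger\Phi=aI+\beta S'$ into $(\Phi^\dagger\Phi)^2=c\,\Phi^\dagger\Phi$ (\cref{lem:tfaetight1}) and using $a(c-a)=(n-1)b$ from \eqref{eq:acan1b} yields, over the field, $S'^2=tS'+(n-1)I$ with $t:=(c-2a)/\beta$.

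First I would promote this to an identity over $\Z$, and this is the step I expect to be the main obstacle. Over $\Z$ the diagonal entries of $S^2$ are exactly $n-1$, and each off-diagonal entry $(S^2)_{ij}$ is a sum of $n-2$ terms $\pm1$, hence an integer in $[-(n-2),n-2]$. Reducing mod $p$ and comparing with $S'^2=tS'+(n-1)I$ forces $(S^2)_{ij}S_{ij}\equiv t\pmod p$ for all $i\neq j$, so the integers $(S^2)_{ij}S_{ij}$ are mutually congruent mod $p$ and any two of them differ by at most $2(n-2)=2n-4$. Since $p$ is odd and $p>2n-5$ we have $p\geq 2n-3>2n-4$, so they are all equal to a single integer $\mu$ with $|\mu|\leq n-2$; hence $(S^2)_{ij}=\mu S_{ij}$ and therefore $S^2=\mu S+(n-1)I$ as integer matrices. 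This pigeonhole is precisely where the bound on $p$ enters.

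Now $S$ is an integer Seidel matrix annihilated by $x^2-\mu x-(n-1)$, whose roots $\theta_1>0>\theta_2$ are distinct (their product is $-(n-1)<0$); since $S$ is not scalar (its diagonal is $0$ and, as $n\geq 2$, it has nonzero off-diagonal entries), $S$ has exactly these two eigenvalues, i.e., $S$ is the Seidel matrix of a regular two-graph on $n$ points. If $m_1,m_2$ are the multiplicities of $\theta_1,\theta_2$, then $\theta_1 I_n-S\succeq 0$ has rank $m_2$ and constant diagonal, and regularity of the two-graph makes the associated frame operator a scalar; hence (after rescaling) $\theta_1 I_n-S$ is the Gram matrix of a real ETF of $n$ vectors in $\R^{m_2}$.

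It remains to see $m_2\in\{d,n-d\}$. Here I would compare $\chi_S(x)=(x-\theta_1)^{m_1}(x-\theta_2)^{m_2}\in\Z[x]$, reduced mod $p$, with the factorization $\chi_{S'}(x)=\big(x-(c-a)\beta^{-1}\big)^d\big(x+a\beta^{-1}\big)^{n-d}$ coming from $\Phi^\dagger\Phi$ having rank $d$; since $|\theta_1-\theta_2|=\sqrt{\mu^2+4(n-1)}\leq n<p$, the reductions of $\theta_1,\theta_2$ remain distinct, and unique factorization gives $\{m_1,m_2\}=\{d,n-d\}$. If $m_2=d$ we are done; if $m_2=n-d$, apply the classical real Naimark complement to the ETF just produced to obtain a real ETF of $n$ vectors in $\R^{n-m_2}=\R^d$. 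The only case escaping the inequality $n<p$ above is $n=3$, $p=3$, where $d\in\{1,2\}$ and real ETFs of $3$ vectors exist in both $\R^1$ and $\R^2$, so the conclusion holds there directly.
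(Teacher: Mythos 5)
The paper does not prove this statement; it is imported verbatim as Proposition~3.3 of \cite{greaves_frames_2021-1}, so there is no internal proof to compare against. Your architecture --- lift the reduced Seidel matrix to an integer Seidel matrix $S$, use the pigeonhole $p\geq 2n-3>2n-4$ (this is exactly why the odd-prime hypothesis lets $2n-5$ suffice) to promote $S'^2=tS'+(n-1)I$ to $S^2=\mu S+(n-1)I$ over $\Z$, read off a regular two-graph, match multiplicities against $\rk(\Phi^\dagger\Phi)=d$ by reducing characteristic polynomials, and Naimark-complement if needed --- is the right one and is essentially how the cited result is proved. The reduction to $\Z$, the construction of the real ETF from $\theta_1I-S$, and the handling of the exceptional pair $(n,p)=(3,3)$ are all sound. (Two trivial omissions: the case $n=d$, where an orthogonal basis gives the real ETF directly, and $n\leq 2$.)

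The one step whose justification fails as written is the claim that $|\theta_1-\theta_2|=\sqrt{\mu^2+4(n-1)}\leq n<p$ forces the reductions $\bar\theta_1,\bar\theta_2$ to be distinct. The roots collapse mod $p$ precisely when $p$ divides $D:=\mu^2+4(n-1)=(\theta_1-\theta_2)^2$, and your inequality only gives $D\leq n^2<p^2$, which permits $p\mid D$ with multiplicity one (when $\sqrt{D}$ is irrational there is no integer $\theta_1-\theta_2$ to which the bound applies). This is not a vacuous worry: $D\equiv t^2+4(n-1)\equiv (c/\beta)^2\pmod p$ by \eqref{eq:acan1b}, so the roots collapse exactly when $c\equiv 0$, which is what happens for totally isotropic ETFs such as the $(0,1,0)$-ETF of \cref{ex:weridsimplices}; in that regime your multiplicity comparison (and, silently, the diagonalizability of $\Phi^\dagger\Phi$ that underlies the factorization $\chi_{S'}=(x-(c-a)\beta^{-1})^d(x+a\beta^{-1})^{n-d}$) breaks down. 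The patch is short but must be made: for $n\geq 4$ one has $p\geq 2n-3>n$, so $p\nmid n(n-1)$; if $c=0$ then \eqref{eq:nadc} gives $a=0$ and then \eqref{eq:acan1b} gives $(n-1)b=0$, hence $b=0$, contradicting \cref{rem:bneq0} (using $n>d$). Thus $c\neq 0$, so $D\not\equiv 0\pmod p$, the reductions are genuinely distinct, $\Phi^\dagger\Phi$ is diagonalizable with eigenvalue multiplicities $d$ and $n-d$, and your unique-factorization comparison goes through. With that insertion the proof is complete.
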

The requirement that $p>2n-5$ is not known to be tight, and in the remainder of this section we wish to look at the case where $d<p<2n-5$, drawing parallels to the real word.

\subsection{The Incoherence of a Finite Reality: Equiangular Cocliques}
\label{ssec:incoherentreality}
Here we will continue to explore orthogonal geometries drawing connections to what is known over $\R$. In \cite{gillespie-2018-equiangular}, Gillespie showed that equiangular lines in $\R^d$ with $n=d(d+1)/2$ vectors which saturate the incoherence bound exist if and only if $d=2,3,7,23$. This gives further evidence that those dimensions are the only dimensions in which there is an ETF of any type with  $n=d(d+1)/2$ vectors in $\bR^d$, which is the Gillespie conjecture. In this section we wish to complete a similar treatment of equiangular lines which saturate an analogous bound for orthogonal geometries.

\begin{defn}
    Let $\Phi$ be an $(a,b)$-equiangular system of vectors in a orthogonal geometry $V$, and fix some $\beta$ such that $\beta^2=b$. The set of vectors $\Phi$ is $\beta$-\textbf{incoherent} if $|\Phi|\leq 2$ or $\Delta(\phi_j,\phi_k,\phi_\ell)=\beta^3$ for all distinct $j,k,\ell$.
    We define the $\beta$-incoherence number $\Inc_\beta(\Phi)$ to be the size of the largest subset of $\beta$-incoherent vectors in $\Phi$.
    Likewise, $\Phi$ is $\beta$-\textbf{coherent} if $|\Phi|\geq 3$ and $\Delta(\phi_j,\phi_k,\phi_\ell)=-\beta^3$
\end{defn}

We note that the choice of $\beta$ will affect the incoherence number, which we highlight in the following example. Over $\R$, $\beta$ is chosen to be positive, and incoherent sets are linearly independent. In the finite field setting we will pick $\beta$, when possible, so that incoherent sets are linearly independent as well. 
\begin{ex}\label{ex:incohind}
    Consider $\F_{11}^2$ in the real model with the standard dot product and define
    \[\Phi=\begin{bmatrix}
        0 & 3 & 8\\
        1 & 5 & 5
    \end{bmatrix}\quad \textrm{and} \quad \Phi^\dagger\Phi = \begin{bmatrix}
        1 & 5 & 5\\
        5 & 1 & 5\\
        5 & 5 & 1
    \end{bmatrix}.\]
    $\Phi$ is a $(1,3,7)$-ETF. Notice that $5^2=3$ and that all three vectors of $\Phi$ are a $5$-incoherent set as $\Delta(\phi_1,\phi_2,\phi_3)=5^3=4$. However notice also that $6^2=3$, But $\Phi$ is not a $6$-incoherent set. So $\Inc_{5}(\Phi)=3$ and $\Inc_6(\Phi)=2$.
\end{ex}

\begin{rem}
    \label{rem:switchit}
    Assume that $\Phi$ is an $(a,b)$-equiangular system of lines which is also a $\beta$-incoherent set. We can construct a second system of $n$ equiangular lines $\Psi$ which is switching equivalent to $\Phi$. First let $\psi_1=\phi_1$. Then for each $1< j\leq n$, let $\psi_j=\phi_j$ if $\ip{\phi_1,\phi_j}=\beta$ and let $\psi_j=-\phi_j$ otherwise. By construction $\Psi=(\psi_j)_{j=1}^n$ is switching equivalent to $\Phi$. Because $\ip{\phi_j,\phi_k}\ip{\phi_k,\phi_\ell}\ip{\phi_\ell,\phi_j}=\beta^3$ for all distinct $j,k,\ell$, we know that $\ip{\psi_j,\psi_k}\ip{\psi_k,\psi_\ell}\ip{\psi_\ell,\psi_j}=\beta^3$; so, $\Psi$ is also an incoherent set.
    Notice also that because $\ip{\psi_1,\psi_j}=\beta$ for all $j>1$ and $\ip{\psi_j,\psi_k}\ip{\psi_k,\psi_1}\ip{\psi_1,\psi_j}=\beta^3$ we know that $\ip{\psi_j,\psi_k}=\beta$ for all distinct $j,k$. This shows that $\Psi$ is not only an incoherent set but $\ip{\phi_j,\phi_k}=\beta$ for all distinct $j,k$.
    We will regularly make the assumption that incoherent sets have all scalar products being equal, which is always possible up to switching equivalence.
\end{rem}

The definition of incoherent sets comes from the definition of an incoherent set of a two-graph from \cref{sec:combinatorics-two-graphs}. Let $\Phi$ be an $(a,b)$-equiangular system of lines, then the coherent triples, with respect to a choice of $\beta$ are the triples of vectors whose triple products are equal to $-\beta^3$.

\begin{lem}
    \label{lem:incoherencesetsarealmostnice}
    Let $\Phi$ be an $(a,b)$-equiangular system of lines in a $d$-dimensional non-isotropic orthogonal space $V$ such that 
    there exists a non-zero $\beta\in\F$ where $\beta^2=b$, $a\neq \beta$, and $\Phi$ is $\beta$-incoherent.
    The vectors of $\Phi$ are linearly independent or
    a minimally dependent set; therefore, $|\Phi|\leq d+1$.
    
    \noindent When $|\Phi| = d+1$ and $\Char\F\nmid d(d-1)$, then $a+d\beta=0$ and $\Phi$ is a regular $d$-simplex if $a\neq 0$.
    
    \noindent Furthermore if $\Char\F\nmid d(d-1)$ with $a\neq 0$ and $\beta\neq -\frac{a}{d-1}$ then if $|\Phi| = d$ the vectors of $\Phi$ are linearly independent. If the parameters also satisfy $\beta^3\neq -\frac{a^3}{d^3}$ then $|\Phi| \leq d$.
\end{lem}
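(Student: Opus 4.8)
The plan is to work with the Gram matrix $G = \Phi^\dagger\Phi$ of the incoherent system. By \cref{rem:switchit}, I may assume without loss of generality (up to switching equivalence, which preserves linear dependence structure, dimension, and all the hypotheses) that $\ip{\phi_j}{\phi_k} = \beta$ for \emph{all} distinct $j,k$; this is the crucial normalization. Then $G = (a-\beta)I_{|\Phi|} + \beta J$, where $J$ is the all-ones matrix. The rank of such a matrix is easy to compute: if $a - \beta \neq 0$, then $G$ is invertible precisely when $a + (|\Phi|-1)\beta \neq 0$, and when $a + (|\Phi|-1)\beta = 0$ the rank drops to $|\Phi|-1$ with kernel spanned by the all-ones vector. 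Since $\rk(G) \le d$ always (as $V$ is $d$-dimensional and $\Phi$ lies in it) and, when $\image\Phi$ is non-isotropic, $\rk(G) = \rk(\Phi)$ by \cref{lem:nondegiff}, this immediately controls the linear independence of the vectors.

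The key steps, in order, are as follows. First, reduce to the normalized case $G = (a-\beta)I + \beta J$ via \cref{rem:switchit}. Second, observe that the columns of $G$ are linearly independent unless $a + (|\Phi|-1)\beta = 0$, in which case the unique (up to scaling) dependence is the all-ones vector, so $\Phi$ is either independent or minimally dependent; since a minimally dependent set in a $d$-dimensional space has at most $d+1$ elements, $|\Phi| \le d+1$. Third, for the case $|\Phi| = d+1$: linear independence would force $\rk(G) = d+1 > d$, impossible, so the dependence relation must hold, giving $a + d\beta = 0$; then $G = (a - \beta)I + \beta J$ has rank $d$, and I claim $\image\Phi$ is non-isotropic and $\Phi$ is a $c'$-tight frame. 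To see tightness I would verify $G^2 = c'G$ directly using $a + d\beta = 0$ (so $\beta = -a/d$, $a - \beta = a(d+1)/d$, and $J^2 = (d+1)J$), computing $c' = (a-\beta)^2 \cdot$ [correction factor] $= a^2(d+1)/d^2 \cdot$ ... more cleanly, $G = \frac{a}{d}((d+1)I - J)$ and $((d+1)I - J)^2 = (d+1)^2 I - 2(d+1)J + (d+1)J = (d+1)((d+1)I - J)$, so $G^2 = \frac{a^2}{d^2}(d+1)((d+1)I-J) = \frac{a(d+1)}{d}G$, i.e. $c' = \frac{(d+1)a}{d}$, which is nonzero since $\Char\F \nmid d(d+1)$ — wait, the hypothesis is $\Char\F \nmid d(d-1)$; here I need $\Char\F\nmid d$, which holds, and $\Char\F\nmid (d+1)$ is not assumed but $c'$ nonzero when $a\neq 0$ requires it, so when $a \ne 0$ one separately notes $d+1$ must be a unit — actually this is exactly the content of \cref{lem:simplexexistchar}, so I invoke that: $\Phi$ being an ETF for its ($d$-dimensional, by the rank count and \cref{lem:nondegiff} applied via \cref{lem:tfaetight2} since $G^2 = c'G$ with $c' \ne 0$) span makes it a regular $d$-simplex. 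Fourth, for the case $|\Phi| = d$: linear independence fails only if $a + (d-1)\beta = 0$, i.e. $\beta = -a/(d-1)$, which is excluded by hypothesis, so $\Phi$ is independent. Fifth, for the bound $|\Phi| \le d$ under the extra hypothesis $\beta^3 \ne -a^3/d^3$: the only remaining possibility to rule out is $|\Phi| = d+1$, which by step three forces $\beta = -a/d$, hence $\beta^3 = -a^3/d^3$, a contradiction.

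The main obstacle I anticipate is step three: carefully establishing that when $|\Phi| = d+1$ the span $\image\Phi$ is genuinely non-isotropic (so that the rank-Gram correspondence and the simplex definition apply) rather than merely having a Gram matrix of the right rank. The resolution is to use the identity $G^2 = c'G$ with $c' \ne 0$ together with \cref{lem:tfaetight2}: that lemma says precisely that $\Phi\Phi^\dagger = c'I$ on $\image\Phi$ (equivalently $\Phi\Phi^\dagger\Phi = c'\Phi$, which follows from $G^2 = c'G$) implies $\image\Phi$ is non-isotropic and $\Phi$ is a $c'$-tight frame for it; combined with $\rk(G) = d = \rk(\Phi)$ and the equiangularity parameters already being $(a,b)$, this is exactly the definition of a regular $d$-simplex. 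A secondary subtlety is bookkeeping the nonvanishing of $c' = (d+1)a/d$: one needs $a \ne 0$ (assumed) and $d+1 \not\equiv 0$; the latter is handled by \cref{lem:simplexexistchar} (a regular $d$-simplex exists only if $\Char\F \nmid d+1$), so in the contrary case no such $\Phi$ with $|\Phi|=d+1$ and $a\ne 0$ can be incoherent at all, which is consistent with the stated conclusion ("$\Phi$ is a regular $d$-simplex if $a \ne 0$").
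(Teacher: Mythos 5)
Your proposal follows essentially the same route as the paper's proof: normalize via \cref{rem:switchit} so that $G=\Phi^\dagger\Phi=(a-\beta)I+\beta J$, use $a\neq\beta$ to see that $\ker G$ is contained in the span of the all-ones vector (so the vectors are linearly independent or minimally dependent, whence $|\Phi|\leq d+1$), extract $a+d\beta=0$ when $|\Phi|=d+1$, and exclude $|\Phi|=d+1$ under the extra hypotheses. Two of your steps are in fact a bit more direct than the paper's: you verify $G=\frac{a}{d}\left((d+1)I-J\right)$ satisfies $G^2=\frac{(d+1)a}{d}G$ by hand where the paper instead checks the hypotheses of \cref{thm:theresult2}, and you read $\beta^3=-a^3/d^3$ straight off of $\beta=-a/d$ where the paper detours through \cref{thm:iffsimplex}. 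Both variants are sound.

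One step is stated with the logic reversed and should be repaired. You assert that $\Phi\Phi^\dagger\Phi=c'\Phi$ ``follows from'' $G^2=c'G$ so that \cref{lem:tfaetight2} applies. It does not in general: $G^2=c'G$ only says that every column of $\Phi\Phi^\dagger\Phi-c'\Phi$ is orthogonal to all of $\image\Phi$, i.e.\ lies in $\rad(\image\Phi)$, and the vanishing of that radical is exactly the non-isotropy you are trying to establish --- the paper explicitly cautions (after \cref{ex:discrisweird}) that condition (iii) of \cref{lem:tfaetight1} does not by itself force $\image\Phi$ to be non-isotropic. The fix is already contained in your own write-up, just in the wrong order: from $a+d\beta=0$ and $a\neq\beta$ one gets $\rk(G)=d$, and then $d=\rk(G)\leq\rk(\Phi)\leq\dim V=d$ forces $\rk(\Phi)=\rk(G)$, so \cref{lem:nondegiff} gives that $\image\Phi=V$ is non-isotropic and $\Phi$ is a frame for $V$; only then does \cref{lem:tfaetight1}(iii) convert $G^2=c'G$ into tightness. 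Note also that $c'=\frac{(d+1)a}{d}\neq 0$ needs no appeal to \cref{lem:simplexexistchar}: $a=-d\beta$ gives $a-\beta=-(d+1)\beta$, so $a\neq\beta$ and $\beta\neq 0$ already force $d+1$ to be invertible in $\F$. With that reordering your argument is complete.
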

\begin{proof}
    Assume that $|\Phi|=n$. We will construct a second system of $n$ equiangular lines $\Psi$ which is switching equivalent to $\Phi$ as in \cref{rem:switchit}.
    In this case, the Gram matrix is $G=\Psi^\dagger\Psi=\beta J+ (a-\beta)I$ where $J$ is the all-ones matrix.
    Notice that the all-ones vector $\mathbbm{1}$ is an eigenvector with eigenvalue $a+(n-1)\beta$. 
    Likewise we can consider the vectors whose entries sum to zero, which is an eigenspace of dimension $n-1$, with eigenvalue $a-\beta\neq 0$ by the initial assumptions. Notice that $\mathbbm{1}$ is in the eigenspace of vectors whose entires sum to zero if and only if $\Char\F$ divides $n$. In which case the $\dim(\ker\Psi)\leq \dim(\ker\Psi^\dagger\Psi)\leq 1$. 
    If $\Char\F$ does not divide $n$, then we have a basis for the domain of $G$ in terms of the eigenvectors.
    This means that $\ker\Psi\leq \ker\Psi^\dagger\Psi\leq \lspan\{\mathbbm{1}\}$ with $\ker\Psi^\dagger\Psi= \lspan\{\mathbbm{1}\}$ if and only if $a+(n-1)\beta=0$. This means that the vectors of $\Psi$ and therefore the vectors of $\Phi$ are linearly independent or form a minimally dependent set when $a+(n-1)\beta=0$.

    Now we wish to explore the maximal case. Assume that $n=d+1$, which would mean that $a+(n-1)\beta=a+d\beta=0$.
    This also means that $a^2=d^2b$, and $\Psi$ is a frame for $V$.
    Assume that $\Char\F$ neither divides $d=n-1$ nor $d+1=n$ in which case  $\frac{1}{d}a^2=db$.
    Likewise for any distinct $j,k$ we have that 
    \[\sum_{\ell=1}^n\Delta(\phi_j,\phi_k,\phi_\ell)=(n-2)\beta^3+2ab=(d-1)\beta^3+-2d\beta^3=-(d+1)\beta^3=\frac{d+1}{d}ab.\] 
    This means that $\Psi$ is an ETF, in particular a regular $d$-simplex by \cref{thm:theresult2} if $a\neq 0$.

    To prove the last claim we will first assume that $\Char\F\nmid d(d-1)$ and $a\neq 0$.
    Consider the case where $n=d$ in which case we need only show that $a+(n-1)\beta\neq0$. Notice that this is equivalent to $\beta\neq -\frac{a}{n-1}$.
    In this case if $n=d+1$, we would have that $\Phi$ is a regular $d$-simplex from above, but by \cref{thm:iffsimplex}, we would have that all triple products would equal $-\frac{a^3}{d^3}$ contradicting out assumption that all triple products equal $\beta^3\neq -\frac{a^3}{d^3}$. Thus, $n\leq d$.
\end{proof}
A field $\F_q$ with characteristic $p$ has a non-trivial third root of unity when $x^2+x+1$ has non trivial roots since $x^3-1=(x-1)(x^2+x+1)$. This is the case when $p\equiv 1 \pmod 3$ or $q=p^{2\ell}$. 

As a corollary of \cref{lem:incoherencesetsarealmostnice}, incoherent sets which are minimally dependent sets are simplices, and can be often easily ruled out.

\begin{cor}
    \label{lem:incoherencebdd}
    Let $\Phi$ be an $(a,b)$-equiangular system of $n$ lines in a $d$-dimensional non-isotropic orthogonal space $V$ where $a\neq 0$ and $a^2\neq b$. Further let $\beta\in\F$ where $\beta^2=b\neq 0$. 
    If $\Char\F\nmid d$ then
    \begin{equation}
        \label{eq:incohebdd}
        \min(\Inc_{\beta}(\Phi),\Inc_{-\beta}(\Phi))\leq d
    \end{equation}
    Furthermore if $\Char\F\nmid (d-1)$ and for $\beta$ such that $\Inc_{\beta}(\Phi)=d=\min(\Inc_{\beta}(\Phi),\Inc_{-\beta}(\Phi))$, then any maximal incoherent set $\Gamma$ is linearly independent.
\end{cor}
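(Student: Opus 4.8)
The plan is to handle the two assertions separately, in each case combining Lemma~\ref{lem:incoherencesetsarealmostnice} with the explicit spectrum of the Gram matrix $\beta J+(a-\beta)I$ of a switched incoherent set (here $J$ is the all-ones matrix). For the inequality \eqref{eq:incohebdd} I would argue by contradiction: suppose $\Inc_\beta(\Phi)>d$ and $\Inc_{-\beta}(\Phi)>d$. Since $a^2\ne b$ gives $a\ne\pm\beta$, Lemma~\ref{lem:incoherencesetsarealmostnice} applies with both $\beta$ and $-\beta$ and bounds every incoherent set by $d+1$ vectors, so there are a maximal $\beta$-incoherent set $\Gamma_+$ and a maximal $(-\beta)$-incoherent set $\Gamma_-$, each of size $d+1$. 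After switching (Remark~\ref{rem:switchit}) their Gram matrices are the $(d+1)\times(d+1)$ matrices $\beta J+(a-\beta)I$ and $-\beta J+(a+\beta)I$, both singular since $d+1$ vectors cannot span the $d$-dimensional $V$; a rank-one determinant computation gives $\det(\beta J+(a-\beta)I)=(a-\beta)^{d}(a+d\beta)$ and $\det(-\beta J+(a+\beta)I)=(a+\beta)^{d}(a-d\beta)$, so $a\ne\pm\beta$ forces $a+d\beta=0=a-d\beta$. Adding gives $2a=0$ and subtracting gives $2d\beta=0$, both impossible since $\F$ has odd characteristic with $\Char\F\nmid d$ and $\beta\ne 0$.

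For the second assertion, fix $\beta$ with $\Inc_\beta(\Phi)=d=\min(\Inc_\beta(\Phi),\Inc_{-\beta}(\Phi))$ and a maximal $\beta$-incoherent set $\Gamma$, and suppose toward a contradiction that $\Gamma$ is not linearly independent; by Lemma~\ref{lem:incoherencesetsarealmostnice} it is then minimally dependent of some size $m$, and $m\ge 3$ because $a\ne 0$ and $a^2\ne b$ exclude minimally dependent sets of size $\le 2$. Passing to a switched copy of $\Phi$ in which all scalar products within $\Gamma$ equal $\beta$, the Gram matrix of $\Gamma$ is $\beta J+(a-\beta)I$ of size $m$; minimal dependence forces it to have rank $m-1$, which with $a\ne\beta$ pins down $a+(m-1)\beta=0$ (and $\Char\F\nmid m$), hence also $\Char\F\nmid(m-1)$ (else $a=0$) and $\beta=-\tfrac{a}{m-1}$. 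Since then $(\Gamma^\dagger\Gamma)^2=(a-\beta)(\Gamma^\dagger\Gamma)$, Lemmas~\ref{lem:nondegiff} and \ref{lem:tfaetight1} make $\Gamma$ an $(a,b,a-\beta)$-ETF for its $(m-1)$-dimensional span, i.e.\ a regular $(m-1)$-simplex whose dependence relation is $\sum_{\phi\in\Gamma}\phi=0$; running the same analysis on a $\beta$-incoherent set of size $\Inc_\beta(\Phi)=d$ forces $m=d$, so $\Gamma$ has size $d$ and $\beta=-\tfrac{a}{d-1}$.

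It remains to contradict $\Inc_{-\beta}(\Phi)\ge d$, and this is the step I expect to be the main obstacle. The available leverage is the relation $\sum_{\phi\in\Gamma}\phi=0$: pairing it against any vector $\psi$ of $\Phi$ outside $\Gamma$ gives $\sum_{\phi\in\Gamma}\ip{\psi}{\phi}=0$ with each term $\pm\beta$, hence $(2r-d)\beta=0$ where $r$ counts the $+\beta$ terms; and any $(-\beta)$-incoherent set of size $d$ or $d+1$ inside $\Phi$ would, by Lemma~\ref{lem:incoherencesetsarealmostnice} applied to $-\beta$, either be linearly independent or pin $-\beta=-\tfrac{a}{d-1}$ or $-\beta=\tfrac{a}{d}$, each incompatible with $\beta=-\tfrac{a}{d-1}$. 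Combining these constraints with the observation that no triple product can equal both $\beta^3$ and $-\beta^3$ (since $\beta\ne 0$ and $\Char\F\ne 2$) should show $\Phi$ cannot contain a $(-\beta)$-incoherent set of size $d$, contradicting the hypothesis and forcing $\Gamma$ to have been linearly independent. When $\Char\F>d$ and $d$ is odd the identity $(2r-d)\beta=0$ already has no solution and closes the argument outright; making the remaining small-characteristic cases airtight — without simply importing $\Char\F>d$ — is the delicate point, and likely needs a finer count of how many outside vectors a regular simplex can meet at each sign pattern.
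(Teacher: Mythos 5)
Your argument for the inequality \eqref{eq:incohebdd} is correct and essentially matches the paper's: both proofs come down to the fact that a $\beta$-incoherent set of size $d+1$ forces $a+d\beta=0$ while a $(-\beta)$-incoherent one forces $a-d\beta=0$, and the two together are impossible since $a\neq 0$ and the characteristic is odd and prime to $d$. Your determinant computation $\det\bigl(\beta J+(a-\beta)I\bigr)=(a-\beta)^{d}(a+d\beta)$ is a clean, self-contained way to extract that constraint without re-invoking the characteristic hypotheses of \cref{lem:incoherencesetsarealmostnice}.

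The second assertion is where the proposal genuinely breaks down, and the gap you flag yourself is not merely ``delicate'' --- the strategy is aimed at the wrong target. You try to refute $\Inc_{-\beta}(\Phi)\geq d$ outright, i.e., to show that once $\Gamma$ is assumed dependent no $(-\beta)$-incoherent set of size $d$ can exist. But nothing you derive (nor anything in \cref{lem:incoherencesetsarealmostnice}) forbids a \emph{linearly independent} $(-\beta)$-incoherent set of size $d$; such a set is fully compatible with $\beta=-a/(d-1)$ and with your relation $(2r-d)\beta=0$, so no contradiction is available along that route in general, and your closing remark only covers the case $\Char\F>d$ with $d$ odd. The paper's proof never contradicts the hypothesis $\Inc_{-\beta}(\Phi)\geq d$; it \emph{uses} it to pin down $\beta$ and then invokes the last clause of \cref{lem:incoherencesetsarealmostnice}. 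Concretely: if $\Inc_{-\beta}(\Phi)=d+1$, the maximal $(-\beta)$-incoherent set is minimally dependent with $a-d\beta=0$, so $\beta=a/d$ (note that your ``pins $-\beta=a/d$'' has the sign reversed); if $\Inc_{-\beta}(\Phi)=d$ and that set is dependent, then $a-(d-1)\beta=0$, so $\beta=a/(d-1)$. In either dependent configuration $\beta$ is forced away from $-a/(d-1)$, which is exactly the value the lemma requires for a size-$d$ $\beta$-incoherent set to fail to be independent; hence $\Gamma$ is linearly independent directly, with no contradiction needed. (One caveat worth recording: excluding $a/d=-a/(d-1)$ in the first sub-case actually needs $\Char\F\nmid(2d-1)$, a point the paper's own proof also glosses over when it writes $a/d\neq -a/d$.)
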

\begin{proof}
    We know that $\Inc_{\beta}(\Phi)\leq d+1$ and $\Inc_{-\beta}(\Phi)\leq d+1$.
    We will assume that the first bound is saturated in which case let $\Gamma$ be $d+1$ vectors of $\Phi$ which are $\beta$-incoherent. In this case we have that $a+d\beta=0$, so $a=-d\beta$.
    If the second bound is saturated there would be a collection of $d+1$ vectors $\Gamma'$ which are $-\beta$-incoherent. In this case we have that $a-d\beta=0$, so $a=d\beta$. For $a=d\beta=-d\beta$ we would need that $a=0$ which happens if and only if $\Char\F$ is $2$ or divides $d$. But from our assumptions, neither is the case. So both bounds cannot be saturated.

    Now assume that $\Char\F\nmid (d-1)$, $\Inc_{\beta}(\Phi)=d=\min(\Inc_{\beta}(\Phi),\Inc_{-\beta}(\Phi))$, and $\Gamma$ is a maximal $\beta$-incoherent set. There are two cases to consider here.
    First, if $\Inc_{-\beta}(\Phi)=d+1$, then the maximal $-\beta$-incoherent set is a regular $d$-simplex, meaning $\beta=\frac{a}{d}\neq -\frac{a}{d}$. So, the vectors of $\Gamma$ are linearly independent.
    Now assume that $\Inc_{\beta}(\Phi)=d=\Inc_{-\beta}(\Phi)$. In this case, assume that $\Gamma$ is a maximal $\beta$-incoherent set and $\Gamma'$ is a maximal $-\beta$-incoherent set, which is also a minimally dependent set.
    This means that $a-(d-1)\beta=0$ and therefore $\beta=\frac{a}{d-1}\neq -\frac{a}{d-1}$; so, $\Gamma$ must be linearly independent.
\end{proof}

We note that the bound in \eqref{eq:incohebdd} does not guarantee linear independence in general when either $\Inc_{\beta}(\Phi)=d$, or $\Inc_{-\beta}(\Phi)=d$ as highlighted in the following example.
\begin{ex}\label{ex:weridsimplices2}
    Consider again \cref{ex:weridsimplices}:
    \[\Phi = \begin{bmatrix}
        0 & 0 & 0 & 0 & 1 & 1 & 1 & 1 & 1 & 1\\  
        0 & 0 & 2 & 1 & 0 & 0 & 1 & 1 & 2 & 2\\  
        1 & 1 & 0 & 0 & 0 & 0 & 1 & 2 & 1 & 2\\  
        1 & 2 & 2 & 2 & 1 & 2 & 0 & 0 & 0 & 0\\
    \end{bmatrix},
    \]
    where $\Phi$ is a $(0, 1, 0)$-ETF of $n=10$ vectors for an orthogonal geometry on $\F_3^4$.
    Computationally we can determine that the first 4 vectors form a maximal set of $1$-incoherent vectors. And likewise the last 4 vectors form a maximal $-1$-incoherence set; however, neither are linearly independent.
\end{ex}

Let $\Phi:\F^n\ra V$ be an $(a,b)$-equiangular system of lines in a $d$-dimension orthogonal space $V$ where $\Char\F\nmid d(d-1)$, $a^2\neq b$ and $a\neq 0$. We will call the bound from \eqref{eq:incohebdd}
the \textbf{incoherence bound}, and we will denote $\Inc(\Phi):=\min(\Inc_{\beta}(\Phi),\Inc_{-\beta}(\Phi))$

For the remainder of this section we will be interested in systems of lines with maximal incoherent sets which are also linearly independent. \cref{lem:incoherencebdd} guarantees that when the incoherence bound is saturated there exists some $\beta$ such that if $\Gamma$ is a maximal $\beta$-incoherent set, then $\Gamma$ is linearly independent.

If $\Gamma$ is a maximal incoherent subset then by definition for any vector $\gamma$ not in $\Gamma$ there exists $\alpha_1,\alpha_2\in\Gamma$ where $\{\gamma,\alpha_1,\alpha_2\}$ is a coherent triple. We will define two sets 
\[\Gamma_i(\gamma)=\set{\delta\in\Gamma}{\{\gamma,\alpha_i,\delta\} \text{ is coherent}}\] 
for $i=1,2$. With \cref{rem:switchit} we can assume that this implies that $\ip{\gamma}{\alpha_1}=\pm \beta$ and $\ip{\gamma}{\alpha_1}=\mp \beta$ and the two sets above can be equivalently defined as 
\[\Gamma_1(\gamma)=\set{\delta\in\Gamma}{\ip{\gamma}{\delta}=-\ip{\gamma}{\alpha_1}} \quad \textrm{and} \quad \Gamma_2(\gamma)=\set{\delta\in\Gamma}{\ip{\gamma}{\delta}=-\ip{\gamma}{\alpha_2}},\]
or in other words, $\Gamma_1(\gamma)$ and $\Gamma_2(\gamma)$ partition $\Gamma$ based on the scalar products of elements in $\Gamma$ with $\gamma$, being $\pm\beta$, which are independent of the choice of $\alpha_1$ and $\alpha_2$. 
By convention we label the sets such that $|\Gamma_1(\gamma)|\leq |\Gamma_2(\gamma)|$. 

The following three lemmas are generalizations of Theorem 5.4, Theorem 5.6, and a remark in \cite{taylortwographs} which are used heavily in \cite{gillespie-2018-equiangular}.

\begin{lem}
    \label{lem:rootsofpolyarecardinality}
    Let $\Phi$ be an $(a,b)$-equiangular system of $n$ lines in a $d$-dimensional orthogonal geometry $V$ 
    such that $a^2\neq b$ and $a\neq 0$.
    Assume there exists $\Gamma\se \Phi$ that is a maximal $\beta$-incoherent set ($|\Gamma|=d$) that is also linearly independent. Then for every $\gamma$ not in $\Gamma$, we have that $|\Gamma_1(\gamma)|$ and $|\Gamma_2(\gamma)|$ are roots of the following equation
    \[4x^2-4dx+(\rho-1)^2(d+\rho)\equiv 0\]
    where $\rho=a\beta^{-1}$. Furthermore when $\Char\F>d$, the smallest integers which are roots are $|\Gamma_1(\gamma)|$ and $|\Gamma_2(\gamma)|$.
\end{lem}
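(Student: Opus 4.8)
The plan is to use the basis $\Gamma$ to pin down the coordinates of an external vector $\gamma$ from the fact that all of its scalar products with members of $\Gamma$ are $\pm\beta$. First I would apply \cref{rem:switchit}, replacing $\Phi$ by a switching-equivalent system so that $\ip{\delta}{\delta'}=\beta$ for all distinct $\delta,\delta'\in\Gamma$; by \cref{lem:mprodsw} this changes neither $a,b,d,\beta$ nor the coherent triples, hence neither $\Gamma_1(\gamma)$ nor $\Gamma_2(\gamma)$. Looking at a (co)herent triple $\{\gamma,\delta,\delta'\}$ with $\delta,\delta'\in\Gamma$ and using $\ip{\gamma}{\delta}^2=b=\beta^2$, one gets $\ip{\gamma}{\delta}\in\{\beta,-\beta\}$ for every $\delta\in\Gamma$, and $\Gamma_1(\gamma),\Gamma_2(\gamma)$ are exactly the two fibers of $\delta\mapsto\ip{\gamma}{\delta}$. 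Since $\Gamma$ is linearly independent of size $d=\dim V$, it is a basis and its Gram matrix $\beta J+(a-\beta)I$ is invertible, which records $a-\beta\neq 0$ and $a+(d-1)\beta\neq 0$, the two quantities that will appear as denominators.

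Next I would write $\gamma=\sum_{\delta\in\Gamma}x_\delta\,\delta$; since the scalar product is symmetric bilinear in Case O, $\ip{\delta'}{\gamma}=\beta\sigma+(a-\beta)x_{\delta'}$ with $\sigma:=\sum_\delta x_\delta$. As the left side is $\pm\beta$, the coefficient $x_{\delta'}$ takes one value on $\Gamma_1(\gamma)$ and another on $\Gamma_2(\gamma)$, each an explicit affine expression in $\sigma$. Summing these coefficients gives one linear relation between $\sigma$ and $r:=|\Gamma_1(\gamma)|$, $s:=|\Gamma_2(\gamma)|=d-r$; expanding $a=\ip{\gamma}{\gamma}=\sum_{\delta'}x_{\delta'}\ip{\delta'}{\gamma}$ gives a second. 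Eliminating $\sigma$, clearing the denominators $a-\beta$ and $a+(d-1)\beta$, and setting $\rho=a\beta^{-1}$ collapses everything to $(s-r)^2=(\rho+d-1)(d+\rho-\rho^2)$. Because $r+s=d$ we have $4rs=d^2-(s-r)^2$, and the identity $d^2-(\rho+d-1)(d+\rho-\rho^2)=(\rho-1)^2(d+\rho)$ (a one-line expansion) shows that $r$ and $s$ are the two roots of $x^2-dx+rs$, i.e.\ of $4x^2-4dx+(\rho-1)^2(d+\rho)\equiv 0$ in $\F$.

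For the last sentence, the quadratic has leading coefficient $4\neq 0$ since $\F$ has odd characteristic, hence at most two roots in $\F$; we have just exhibited them as the images of the integers $r$ and $s$. When $\Char\F=p>d$, distinct integers in $\{0,1,\dots,d\}$ remain distinct modulo $p$, so $r$ and $s$ are the smallest non-negative integers in their residue classes, whence the smallest integer solutions of the congruence are exactly $|\Gamma_1(\gamma)|$ and $|\Gamma_2(\gamma)|$ (they coincide precisely when $d$ is even and $r=s=d/2$).

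I expect the only real obstacle to be the bookkeeping: setting up and solving the two-variable system for the coordinates of $\gamma$, and verifying that $a-\beta\neq 0$ and $a+(d-1)\beta\neq 0$ (the former from $a^2\neq b$, the latter from the linear independence of $\Gamma$) so that the elimination is legitimate, with the degenerate cases $\rho=\pm1$ ruled out again by $a^2\neq b$. After that, the lemma follows from the single polynomial identity above.
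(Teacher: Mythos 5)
Your proposal is correct and follows essentially the same route as the paper: switch so that $\Gamma$ has constant pairwise product $\beta$, expand $\gamma$ in the basis $\Gamma$, use the $\pm\beta$ constraints to determine the two coefficient values (the paper solves for them explicitly in \eqref{eq:gnarlybs}, you eliminate $\sigma$ instead), and then expand $a=\ip{\gamma}{\gamma}$ to obtain the quadratic. The algebraic identity you isolate and the non-vanishing of $a-\beta$ and $a+(d-1)\beta$ are exactly the checks the paper makes, so no gap.
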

\begin{proof}
    We know that the vectors of $\Gamma$ for a basis for $V$; denote them $\phi_1,\dots, \phi_d$ and consider the vector $\gamma\not\in \Gamma$. As in the proof of \cref{lem:incoherencesetsarealmostnice} we may pick a switching equivalent set of vectors $\{\psi_j\}_{j=1}^d$ which are a basis and have pairwise scalar products equal to $\beta$ by \cref{rem:switchit}.
    
    We will reorder the vectors of $(\psi_j)_{j=1}^d$, and rescale $\gamma$ by $-1$ such that $\phi_j\in\Gamma_1(\gamma)$ and $\ip{\gamma, \psi_j}=\beta$ for $1\leq j\leq |\Gamma_1(\gamma)|=:r$ and likewise for $\phi_j\in\Gamma_2(\gamma)$ and $\ip{\gamma}{\psi_j}=-\beta$ for $r<j\leq d$. Because the vectors $(\psi_j)_{j=1}^d$ are a basis we can write $\gamma=\sum_{j=1}^db_j\psi_j$. Notice that for $k\leq r$ we have that
    \[\ip{\gamma}{\psi_k}= \sum_{j=1}^db_j\ip{\psi_k}{\psi_j}=\sum_{j=1,j\neq k}^db_j\beta+b_ka =\beta \]
    and for $r<k\leq d$
    \[\ip{\gamma}{\psi_k}= \sum_{j=1}^db_j\ip{\psi_k}{\psi_j}=\sum_{j=1,j\neq k}^db_j\beta+b_ka=-\beta. \]
    This gives us a system of linear equations, and solving for the $b_j$'s
    we get 
    \begin{equation}\label{eq:gnarlybs}
    b_j=\begin{cases}\displaystyle{\frac{a\beta+(2d-2r-1)b}{(a+(d-1)\beta)(a-\beta)}}\\
        \displaystyle{\frac{(1-2r)b-a\beta}{(a+(d-1)\beta)(a-\beta)}} 
    \end{cases}=
    \begin{cases}
        \displaystyle{\frac{2d-2r+\rho-1}{(d+\rho-1)(\rho-1)}} & 1\leq j\leq r\\
        \displaystyle{-\frac{2r+\rho-1}{(d+\rho-1)(\rho-1)}} & 1< j\leq d\\
    \end{cases}
    \end{equation}
    where $\rho = a\beta^{-1}$. We note that under our assumptions $a+(d-1)\beta\not \equiv 0$ and $a-\beta\not\equiv 0$, and so $b_j$ always exists. The rest of the proof follows from the proof in \cite{taylortwographs}; by expanding we get 
    \[a=\ip{\gamma,\gamma}= \sum_{k=1}^db_k\ip{\gamma,\psi_k}=rb_1\beta+(r-d)b_d\beta.\]
    Then plugging in for $b_1$ and $b_d$ we get $4r^2-4dr+(\rho-1)^2(d+\rho)\equiv 0$, of which $r$ and $d-r$ are roots.
\end{proof}

The next result generalizes Theorem 5.6 of \cite{taylortwographs}, whose proof works in the finite field setting as well.
\begin{lem}
    \label{lem:twointersectionnumbers}
    Let $\Phi$ be an $(a,b)$-equiangular system of $n$ lines in a $d$-dimensional orthogonal geometry $V$ such that $a^2\neq b$ and $a\neq 0$.
    Assume there exists $\Gamma\se \Phi$ that is a maximal $\beta$-incoherent set ($|\Gamma|=d$) that is also linearly independent. Then for distinct $\gamma$ and $\delta$ not in $\Gamma$
    \begin{equation} \label{eq:1}
        |\Gamma_1(\gamma)\cap\Gamma_1(\delta)|\equiv |\Gamma_1(\gamma)|-\Delta
    \end{equation}
    where $\Delta$ is either $(\rho-1)^2/4$ or $(\rho^2-1)/4$ and $\rho=a\beta^{-1}$
    Furthermore when $\Char\F>d$ then $|\Gamma_1(\gamma)\cap\Gamma_1(\delta)|$ is the smallest integer which satisfies \eqref{eq:1}.
\end{lem}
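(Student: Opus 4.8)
The plan is to mimic the proof of \cref{lem:rootsofpolyarecardinality}: expand $\gamma$ in the basis $\Gamma$ and read off the overlap of the two partitions from a single scalar product $\ip{\gamma}{\delta}$. First I would apply \cref{rem:switchit} to replace $\Gamma=\{\phi_1,\dots,\phi_d\}$ by a switching-equivalent linearly independent set $\psi_1,\dots,\psi_d$ with $\ip{\psi_i}{\psi_j}=\beta$ for $i\neq j$ and $\ip{\psi_i}{\psi_i}=a$; after rescaling $\gamma$ and $\delta$ by $\pm1$, I may assume $\ip{\gamma}{\psi_j}=\beta$ exactly for $\psi_j\in\Gamma_1(\gamma)$ and $\ip{\delta}{\psi_j}=\beta$ exactly for $\psi_j\in\Gamma_1(\delta)$. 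Then \eqref{eq:gnarlybs} gives $\gamma=\sum_j b_j\psi_j$ with $b_j=u/D$ when $\psi_j\in\Gamma_1(\gamma)$ and $b_j=v/D$ otherwise, where $u=2d-2r+\rho-1$, $v=-(2r+\rho-1)$, $r=|\Gamma_1(\gamma)|$, $D=(d+\rho-1)(\rho-1)$, and crucially $u-v=2(d+\rho-1)$.

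The next step is to compute $\ip{\gamma}{\delta}$ by pairing the expansion of $\gamma$ against the sign pattern of $\delta$. Since the geometry is orthogonal (so the involution is trivial),
\[
\ip{\gamma}{\delta}=\sum_j b_j\ip{\psi_j}{\delta}=\beta\sum_{\psi_j\in\Gamma_1(\delta)}b_j-\beta\sum_{\psi_j\in\Gamma_2(\delta)}b_j .
\]
Writing $t=|\Gamma_1(\gamma)\cap\Gamma_1(\delta)|$ and $r'=|\Gamma_1(\delta)|$, the four cells cut out by the two partitions have sizes $t$, $r-t$, $r'-t$, $d-r-r'+t$, and substituting the two values $u/D,v/D$ of $b_j$ over these cells and using $u=v+2(d+\rho-1)$ collapses the difference of sums to a linear function of $t$ with no cross term: one obtains $\ip{\gamma}{\delta}=\beta\big[(2r'-d)v+2(d+\rho-1)(2t-r)\big]/D$. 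Because $\Phi$ is $(a,b)$-equiangular, $\ip{\gamma}{\delta}=\epsilon\beta$ for some $\epsilon\in\{1,-1\}$, and equating the two expressions solves for $t$ in $\F_q$.

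Finally I would simplify using that $r$ is a root of $4x^2-4dx+(\rho-1)^2(d+\rho)\equiv0$ from \cref{lem:rootsofpolyarecardinality}, i.e.\ $4r(d-r)\equiv(\rho-1)^2(d+\rho)$, together with $r'\equiv r$ (both $|\Gamma_1(\gamma)|$ and $|\Gamma_1(\delta)|$ solve the same quadratic, and once the labels $\Gamma_1$ versus $\Gamma_2$ are aligned for $\gamma$ and $\delta$ — automatic when $\Char\F>d$, since then $r=r'$ is the unique root lying in $\{0,\dots,\lfloor d/2\rfloor\}$ — the two cardinalities agree mod $\Char\F$). After substitution everything reduces to
\[
|\Gamma_1(\gamma)|-|\Gamma_1(\gamma)\cap\Gamma_1(\delta)|\equiv\frac{(\rho-1)\big(\rho+1-\epsilon\big)}{4},
\]
which equals $(\rho-1)^2/4$ when $\ip{\gamma}{\delta}=\beta$ and $(\rho^2-1)/4$ when $\ip{\gamma}{\delta}=-\beta$; the hypothesis $a^2\neq b$ (i.e.\ $\rho\neq1$) makes these two values distinct. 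For the addendum, when $\Char\F>d$ the congruence \eqref{eq:1} determines $|\Gamma_1(\gamma)\cap\Gamma_1(\delta)|$ modulo $\Char\F$, and since $0\le|\Gamma_1(\gamma)\cap\Gamma_1(\delta)|\le d<\Char\F$, this cardinality is the unique nonnegative representative, hence the smallest nonnegative integer satisfying \eqref{eq:1}.

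The step I expect to be the main obstacle is not the algebra but the sign/labeling bookkeeping: ensuring $\Gamma_1$ versus $\Gamma_2$ is consistently chosen for $\gamma$ and for $\delta$ so that $r'\equiv r$ (rather than $r'\equiv d-r$, which would spoil the clean two-valued form of $\Delta$). This is precisely what the "either/or" in the statement of $\Delta$ encodes and is transparent once one restricts attention to $\Char\F>d$; the remaining manipulation is routine once the quadratic relation for $r$ is fed in.
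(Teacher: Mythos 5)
Your proposal follows essentially the same route as the paper's proof: switch so that all pairwise scalar products within $\Gamma$ equal $\beta$, expand $\gamma$ (or $\delta$) in that basis using \eqref{eq:gnarlybs}, evaluate $\ip{\gamma}{\delta}=\epsilon\beta$ over the four cells cut out by the two partitions, and then feed in the quadratic relation from \cref{lem:rootsofpolyarecardinality} to solve for the intersection size. One small algebra slip in your write-up: the final displayed quantity should be $\tfrac{(\rho-1)(\rho-\epsilon)}{4}$ rather than $\tfrac{(\rho-1)(\rho+1-\epsilon)}{4}$ --- your stated values $(\rho-1)^2/4$ for $\epsilon=1$ and $(\rho^2-1)/4$ for $\epsilon=-1$ follow from the former, not the latter.
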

\begin{proof}
    Under the same set up as in \cref{lem:rootsofpolyarecardinality} we will fix two distinct elements $\gamma,\delta\not\in\Gamma$ and let $(\psi_j)_{j=1}^d$ be switching equivalent to the vectors of $\Gamma$ such that every pairwise scalar product is $\beta$, and we will assume that the vectors are rearranged such that
    \begin{align*}
    \gamma&=\sum_{1\leq j\leq r}b_1\psi_j+\sum_{r< j\leq d}b_d\psi_j \quad \textrm{and}\\
    \delta&=\sum_{1\leq j\leq s}b_1\psi_j+\sum_{s< j\leq r}b_d\psi_j+\sum_{r< j\leq t}b_1\psi_j+\sum_{t< j\leq d}b_d\psi_j.\end{align*}
    Notice that due to \cref{lem:rootsofpolyarecardinality} we have that $s+(t-r)\equiv r$ as the size of $\Gamma_1(\gamma)$ is equivalent for all $\gamma$, meaning $t\equiv 2r-s$. Similarly, the size of the intersection $|\Gamma_1(\gamma)\cap \Gamma_1(\delta)|=s$
    Notice that using the above basis we can determine that 
    \begin{align*}
        \ip{\gamma,\delta}&=\sum_{1\leq j\leq s}b_1\ip{\gamma, \psi_j}+\sum_{s< j\leq r}b_d\ip{\gamma, \psi_j}+\sum_{r< j\leq t}b_1\ip{\gamma, \psi_j}+\sum_{t< j\leq d}b_d\ip{\gamma, \psi_j}\\
        &=sb_1\beta + (r-s)b_d\beta - (r-s)b_1\beta - (d-(2r-s))b_d\beta\\
        &= (2s-r)b_1\beta + (3r-2s-d)b_d\beta.
    \end{align*}
    Let $\epsilon = \beta^{-1} \ip{\gamma,\delta}=\pm 1$. Expanding this and using \cref{lem:rootsofpolyarecardinality} by
    plugging in for $r^2=dr-\frac{1}{4}(\rho-1)^2(d+\rho)$ and the values for $b_1$ and and $b_j$ from \eqref{eq:gnarlybs}, we get 
    $s\equiv r+\frac{1}{4}(\rho-1)(\epsilon-\rho)$ which proves the statement for each choice of $\epsilon$.
\end{proof}

The following result was shown by \cite{taylortwographs}, and the proof uses only that $\Phi$ forms a regular two-graph and not any properties of the underlying system of lines.

\begin{lem}
    \label{lem:whatisneededbutnotshown}
    Let $\Phi$ be an $(a,b)$-equiangular system of $n$ lines in a $d$-dimensional orthogonal geometry $V$ such that $a^2\neq b$ and $a\neq 0$.
    Assume there exists $\Gamma\se \Phi$ that is a maximal $\beta$-incoherent set ($|\Gamma|=d$) that is also linearly independent. If $\Phi$ forms a regular two-graph with parameters $(n,\ell,m)$ then
    \[\sum_{\gamma\in\Phi-\Gamma}|\Gamma_1(\gamma)||\Gamma_2(\gamma)|=\frac{\ell|\Gamma|(|\Gamma|-1)}{2}.\]
\end{lem}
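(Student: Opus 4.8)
The plan is to count in two ways the number of "coherent triples that use exactly two vertices from $\Gamma$ and one vertex outside it." On one hand, such a triple is determined by a choice of $\gamma \in \Phi - \Gamma$ together with a pair $\{\alpha_1,\alpha_2\} \subseteq \Gamma$ that forms a coherent triple with $\gamma$. On the other hand, since $\Phi$ is a regular two-graph with parameters $(n,\ell,m)$, it is a $2$-$(n,3,\ell)$ design, so every $2$-element subset of $\Phi$ — in particular every pair of vertices in $\Gamma$ — lies in exactly $\ell$ coherent triples.

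\textbf{Key steps.}
First I would fix $\gamma \in \Phi - \Gamma$ and count the pairs $\{\alpha,\delta\} \subseteq \Gamma$ such that $\{\gamma,\alpha,\delta\}$ is coherent. Using \cref{rem:switchit} we may assume all scalar products among the $\psi_j$ (the switching-equivalent basis of $\Gamma$) equal $\beta$, and $\ip{\gamma}{\psi_j} = \pm\beta$, so that $\Gamma$ is partitioned as $\Gamma_1(\gamma) \sqcup \Gamma_2(\gamma)$ according to the sign. A triple $\{\gamma,\psi_j,\psi_k\}$ with $j,k \in \Gamma$ has triple product $\ip{\gamma}{\psi_j}\ip{\psi_j}{\psi_k}\ip{\psi_k}{\gamma}$; since $\ip{\psi_j}{\psi_k} = \beta$, this equals $-\beta^3$ (the coherence condition) precisely when $\ip{\gamma}{\psi_j}$ and $\ip{\gamma}{\psi_k}$ have opposite signs, i.e.\ when one of $\psi_j,\psi_k$ is in $\Gamma_1(\gamma)$ and the other in $\Gamma_2(\gamma)$. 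Hence the number of such pairs is exactly $|\Gamma_1(\gamma)||\Gamma_2(\gamma)|$. Summing over all $\gamma \in \Phi - \Gamma$ gives the left-hand side.

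For the right-hand side, I count the same set of triples by first choosing the pair inside $\Gamma$: there are $\binom{|\Gamma|}{2} = \frac{|\Gamma|(|\Gamma|-1)}{2}$ such pairs, and each is contained in exactly $\ell$ coherent triples of $\Phi$. None of these $\ell$ triples can have all three vertices in $\Gamma$: since $\Gamma$ is $\beta$-incoherent, every triple inside $\Gamma$ has triple product $+\beta^3 \neq -\beta^3$ (here we use $\beta \neq 0$), so it is never coherent. Therefore each of the $\ell$ coherent triples through a given pair in $\Gamma$ contributes its third vertex from $\Phi - \Gamma$, and is thus counted on the left-hand side. This yields $\sum_{\gamma \in \Phi - \Gamma} |\Gamma_1(\gamma)||\Gamma_2(\gamma)| = \frac{\ell |\Gamma|(|\Gamma|-1)}{2}$, as claimed.

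\textbf{Main obstacle.}
The substantive point is verifying that no coherent triple of $\Phi$ is entirely contained in $\Gamma$, so that the two counts really enumerate the same set; this is where $\beta$-incoherence of $\Gamma$ and $b \neq 0$ (equivalently $\beta \neq 0$) are essential, and it is the only place the hypothesis on $\Gamma$ enters beyond the normalization from \cref{rem:switchit}. Everything else is bookkeeping: identifying coherence with the sign-mismatch condition on $\ip{\gamma}{\psi_j}$, and invoking the $2$-design property of the regular two-graph. Since $|\Gamma| = d$ by hypothesis, one could equivalently write the right-hand side as $\frac{\ell d(d-1)}{2}$, matching the statement.
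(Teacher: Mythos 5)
Your double count is correct, and it is the standard argument: for each $\gamma\notin\Gamma$ the coherent triples with two vertices in $\Gamma$ are exactly the pairs split between $\Gamma_1(\gamma)$ and $\Gamma_2(\gamma)$, while regularity gives $\ell$ coherent triples through each of the $\binom{|\Gamma|}{2}$ pairs in $\Gamma$, none of which can lie wholly inside $\Gamma$ by incoherence (using $\beta\neq 0$ and odd characteristic so $\beta^3\neq-\beta^3$). The paper itself does not prove this lemma --- it only cites the remark in Taylor's two-graphs paper, noting that the argument uses nothing beyond the regular two-graph structure --- so your write-up supplies the missing proof, and it is exactly the bookkeeping that citation alludes to.
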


Now that we have a notion of incoherent vectors that matches the behavior of incoherent real lines we can prove equivalent structural results. The first result is a generalization of Theorem 4.9 in \cite{gillespie-2018-equiangular}. 

\begin{thm}
    \label{thm:youreadesignharry}
    Let $\Phi$ be an $(a,b)$-equiangular system of $n$ lines in a $d$-dimensional orthogonal geometry $V$ over $\F_q$ such that $a^2\neq b$, $a\neq 0$, $\Char\F_q>d$, and $\Phi$ induces a regular two-graph with parameters $(n,\ell,m)$.
    Assume also that there exists $\Gamma\se \Phi$ that is a maximal $\beta$-incoherent set ($|\Gamma|=d$) that is also linearly independent
    where $|\Gamma_1(\gamma)|=g_1$ for all $\gamma$ not in $\Gamma$ and $g_2=|\Gamma|-g_1$.
    
    \noindent If $g_1\neq g_2$ then $(\Gamma, \mathcal B_i)$ is a $2$-$(d,g_i,\lambda_i)$ design for $i=1,2$ such that \[\mathcal B_i=\set{\Gamma_i(\gamma)}{\gamma\in \Phi-\Gamma}\quad \textrm{and} \quad \lambda_i=\frac{\ell(g_i-1)}{2g_j} \enskip \text{for} \enskip j\neq i.\]
    Furthermore, when $n>2d$, then $(\Gamma, \mathcal B_1)$ is a quasi-symmetric $2$-$(d,g_1,\lambda_1;s_1,s_2)$ design, where $s_1$ and $s_2$ are the smallest integers satisfying
    \[s_1\equiv g_1-(\rho-1)^2/4\quad \textrm{and} \quad s_2\equiv g_1-(\rho^2-1)/4,\]
    where $\rho=a\beta^{-1}$. Additionally:
    \begin{itemize}
     \item If $n=d(d+1)/2$ then $(\Gamma, \mathcal B_1)$ is a $4$-design;
     \item If $n=2d$, $(\Gamma, \mathcal B_1)$ is a symmetric $2$-$(d,g_1,\lambda_1;s)$ with $s=s_1$ or $s_2$; and
     \item If $g_1=g_2$ then $(\Gamma, \mathcal B_1\cup \mathcal B_2)$ is a $2$-$(d,d/2,n-d-\ell)$ design.
     \end{itemize}
\end{thm}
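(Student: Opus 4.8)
The plan is to obtain each bullet from a double count over the maximal incoherent basis $\Gamma$, feeding in the regularity of the two-graph together with \cref{lem:twointersectionnumbers} and \cref{lem:whatisneededbutnotshown}. Using \cref{rem:switchit}, first switch so that every pairwise product inside $\Gamma$ equals $\beta$; by hypothesis $\Gamma$ is then a linearly independent incoherent set of size $d$, hence a basis, and $\Phi$ is a frame. For $\gamma\in\Phi-\Gamma$ the values $\ip{\gamma}{\delta}$ ($\delta\in\Gamma$) are $\pm\beta$, their two sign classes are $\Gamma_1(\gamma)$ (size $g_1$) and $\Gamma_2(\gamma)$ (size $g_2$), these are unchanged under rescaling $\gamma$ by $-1$, and a triple $\{\gamma,\delta,\delta'\}$ with $\delta,\delta'\in\Gamma$ is coherent exactly when $\delta,\delta'$ lie in different classes. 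The one basic count I need is: a fixed pair $\{\delta,\delta'\}\subseteq\Gamma$ lies in $\ell$ coherent triples (regular two-graph) and each third vertex lies in $\Phi-\Gamma$ since $\Gamma$ is incoherent; hence exactly $\ell$ of the $n-d$ vectors $\gamma$ separate $\{\delta,\delta'\}$ and the remaining $n-d-\ell$ keep them together. This already settles the $g_1=g_2$ bullet: there both classes have size $d/2$, the $2(n-d)$ blocks of $\mathcal B_1\cup\mathcal B_2$ all have size $d/2$, and a pair of points of $\Gamma$ lies in a common class --- hence in a block of $\mathcal B_1\cup\mathcal B_2$ --- exactly $n-d-\ell$ times.

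For the $g_1\ne g_2$ branch I would first reduce ``$(\Gamma,\mathcal B_1)$ is a $2$-design'' to ``$r_1(x):=\#\{\gamma\in\Phi-\Gamma:x\in\Gamma_1(\gamma)\}$ is independent of $x\in\Gamma$''. Let $N$ be the $(n-d)\times d$ point-block incidence matrix of $\mathcal B_1$, so $(N^\intercal N)_{xx}=r_1(x)$ and $(N^\intercal N)_{xy}$ ($x\ne y$) counts blocks through $\{x,y\}$. Since $\mathcal B_2$-blocks are the complements of $\mathcal B_1$-blocks, combining with the basic count gives $2\,N^\intercal N=\ell(I-J)+\mathbbm{1}\mathbf r^\intercal+\mathbf r\mathbbm{1}^\intercal$, where $\mathbf r=N^\intercal\mathbbm{1}$; thus $N^\intercal N$ has constant diagonal iff $\mathbf r$ is constant iff $N^\intercal N=\frac{\ell}{2}I+(r_1-\frac{\ell}{2})J$, i.e.\ iff $(\Gamma,\mathcal B_1)$ is a $2$-$(d,g_1,\lambda_1)$ design with $\lambda_1=r_1-\ell/2$. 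To see $\mathbf r$ is constant, fix $\phi_i\in\Gamma$ and double count the coherent triples $\{\gamma,\phi_i,\delta\}$ with $\gamma\in\Phi-\Gamma$ and $\delta\in\Gamma\setminus\{\phi_i\}$: summing over $\delta$ first gives $(d-1)\ell$, while summing over $\gamma$ first gives $r_1(i)g_2+(n-d-r_1(i))g_1$, since the admissible $\delta$ form the class of $\Gamma$ opposite to $\phi_i$, of size $g_2$ or $g_1$ according as $\phi_i\in\Gamma_1(\gamma)$ or $\phi_i\in\Gamma_2(\gamma)$. As $g_1\ne g_2$ this determines $r_1(i)$ independently of $i$; substituting $(n-d)g_1g_2=\ell d(d-1)/2$ from \cref{lem:whatisneededbutnotshown} turns $\lambda_1=r_1-\ell/2$ into $\ell(g_1-1)/(2g_2)$, and exchanging $\Gamma_1\leftrightarrow\Gamma_2$ gives the statement for $\mathcal B_2$.

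The remaining bullets are then short. For $n>2d$: \cref{lem:twointersectionnumbers} (applicable since $\Char\F_q>d$) says $|\Gamma_1(\gamma)\cap\Gamma_1(\delta)|$ takes only the values $s_1,s_2$ displayed in the theorem, so by the previous paragraph $(\Gamma,\mathcal B_1)$ is a quasi-symmetric $2$-$(d,g_1,\lambda_1;s_1,s_2)$ design, and Fisher's inequality (\cref{lem:fish}) with $|\mathcal B_1|=n-d>d$ excludes the symmetric case. For $n=2d$: $|\mathcal B_1|=n-d=d=|\Gamma|$, so Fisher's inequality is an equality, $(\Gamma,\mathcal B_1)$ is symmetric, its block pairs all meet in $\lambda_1$ points, and by \cref{lem:twointersectionnumbers} this common value is $s_1$ or $s_2$. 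For $n=d(d+1)/2$: here $\Phi$ saturates Gerzon's bound (\cref{thm:gerzons}) and is an ETF, and I would transfer Gillespie's argument from Theorem~4.9 of \cite{gillespie-2018-equiangular}, extending the double counts above to triples and quadruples of points of $\Gamma$ --- the extremal regular two-graph being rigid enough to force these higher counts to be constant --- so that the $2$-design is in fact a $4$-design.

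The main obstacle is this last claim, the $4$-design in the maximal case: over $\R$, Gillespie exploits that the extremal regular two-graph is ``tight'' in a way tied to eigenvalue multiplicities and linear-programming bounds, and one has to verify that the purely combinatorial core of that argument (the higher-order block counts) still goes through in characteristic $p$ assuming only $p>d$ rather than $p>n$. Everything else --- the $2$-design, quasi-symmetry, symmetric, and $g_1=g_2$ statements --- needs only the basic count, \cref{lem:twointersectionnumbers}, \cref{lem:whatisneededbutnotshown}, and Fisher's inequality.
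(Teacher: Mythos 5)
Your treatment of the $2$-design, quasi-symmetric, symmetric, and $g_1=g_2$ statements is correct and is essentially the paper's own double-counting argument, organized slightly differently: the paper fixes a pair $\{\alpha,\beta\}\subseteq\Gamma$ and solves the linear system $k_1+k_2=n-d-\ell$, $k_1g_1+k_2g_2=(n-d-\frac{3\ell}{2})d+\ell$ for the two pair-counts $k_i$ directly (importing the second identity from Gillespie), whereas you first pin down the replication number $r_1$ by counting coherent triples through a single point of $\Gamma$ and then recover $\lambda_1=r_1-\ell/2$ from the separating count; both routes invoke \cref{lem:whatisneededbutnotshown} at the same moment and land on $\lambda_i=\ell(g_i-1)/(2g_j)$. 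One small attribution issue: the fact you ascribe to Fisher's inequality --- that a $2$-design in which all pairs of distinct blocks meet in a constant number of points has at most as many blocks as points --- is not \cref{lem:fish} as stated but Theorem 1.15 of \cite{Cameron_Lint_1991}, which is what the paper cites for both the $n=2d$ and $n>2d$ cases. Substantively this changes nothing.

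The genuine gap is the $4$-design bullet, and you have misdiagnosed where the difficulty lies. No transfer of Gillespie's spectral or linear-programming machinery to characteristic $p$ is needed, and no higher-order counts over triples and quadruples of points of $\Gamma$ have to be performed. Once you know --- as you have already proven --- that $(\Gamma,\mathcal B_1)$ is a quasi-symmetric $2$-design on $d$ points with exactly $|\mathcal B_1|=n-d=\frac{1}{2}d(d-1)$ blocks, the conclusion that it is a $4$-design is a purely combinatorial theorem about quasi-symmetric $2$-designs meeting the Ray-Chaudhuri--Wilson bound with equality (Proposition 5.6 of \cite{Cameron_Lint_1991}; see also Proposition 13 of \cite{quasi-2-designs}), and this single citation is precisely how the paper concludes. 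At that stage the field, its characteristic, and the two-graph play no further role; the only characteristic hypothesis required is the $\Char\F_q>d$ you already used to extract the intersection numbers from \cref{lem:twointersectionnumbers}. Replacing your proposed adaptation of Gillespie's argument by this citation completes the proof.
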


The proof of this statement is very similar to the proofs in \cite{gillespie-2018-equiangular}, which relies primarily on the fact that $\Phi$ gives rise to a regular two-graph.

\begin{proof}
    As in \cref{rem:switchit} we will assume that $\Gamma$ has all scalar products equalling $\beta$.
    Suppose $|\Gamma|=g$ and fix $\alpha,\beta\in \Gamma$.
    For all $\gamma$ not in $\Gamma$, where $\{\alpha,\beta,\gamma\}$ is a coherent triple, it would be the case that $\ip{\alpha}{\gamma}=-\ip{\beta}{\gamma}$, so
    $\alpha\in\Gamma_i(\gamma)$ and $\beta\in\Gamma_j(\gamma)$ for $i\neq j$. 
    Likewise for all $\gamma$ not in $\Gamma$ whose addition makes $\{\alpha,\beta,\gamma\}$ an incoherent triple we have that the scalar products would be the same so $\alpha,\beta\in\Gamma_i(\gamma)$ for some $i=1,2$. 
    For each $i=1,2$ we define $k_i$ to be the number of $\gamma$ not in $\Gamma$ which makes $\{\alpha,\beta,\gamma\}$ an incoherent triple such that $\alpha,\beta\in \Gamma_i(\gamma)$. Because $\Phi$ induced a regular two-graph with parameters $(n,\ell,m)$ the number of $\gamma\not\in\Gamma$ which form coherent triples with $\alpha$ and $\beta$ is the parameter $\ell$ and therefore $k_1+k_2=n-g-\ell$. By the properties of regular two-graphs outlined in \cite{gillespie-2018-equiangular}, in particular Equation 4.11, we have in addition that $k_1g_1+k_2g_2=(n-g-\frac{3\ell}{2})g+\ell$.

    Solving for $k_1$ and $k_2$ in the case where $g_1\neq g_2$ and using \cref{lem:whatisneededbutnotshown} gives us that $k_1=\frac{\ell(g_1-1)}{2g_2}$ and $k_2=\frac{\ell(g_2-1)}{2g_1}$.
    Notice that these are independent of the choice of $\alpha$ and $\beta$, meaning any pair of elements in $\Gamma$ are contained in $k_i$ of the blocks $\mathcal B_i=\set{\Gamma_i(\gamma)}{\gamma\not\in\Gamma}$, gives us a $2$-$(g,g_i,k_i)$ design for $i=1,2$. Likewise when $g_1=g_2$, we have that any pair of elements is contained in $k_1+k_2=n-g-\ell$ blocks of $\mathcal B_1\cap \mathcal B_2$, giving a $2$-$(g,g/2,n-g-\ell)$ design.
    
    Assume that $|\Gamma_1(\gamma)|< |\Gamma_2(\gamma)|$, which is the case if and only if $|\Gamma_1(\gamma)|<d/2$. By Fisher's inequality \cref{lem:fish}, because $|\Gamma_1|<d/2$ we have that $|\mathcal B_1|\geq d$.
    We will consider the case where $n=2d$ where $|\mathcal B_1|=n-|\Gamma|= d$, because each block corresponds to a vector $\gamma$ not in $\Gamma$.
    From Theorem 1.15 in \cite{Cameron_Lint_1991}, we know that this must also mean that any two distinct block intersections have the same number of points, making $(\Gamma,\mathcal B_1)$ a symmetric $2$-$(|\Gamma|,|\Gamma_1|,\lambda_1, s)$.
    
    If $n>2d$ and therefore $|\mathcal B_1|> d$, there must be at least two block intersection numbers by Theorem  1.15 in \cite{Cameron_Lint_1991}. From Lemma~\ref{lem:twointersectionnumbers} we know that there can only be two block intersection numbers, making $(\Gamma,\mathcal B_1)$ a quasi-symmetric $2$-$(|\Gamma|,|\Gamma_1|,\lambda_1; s_1, s_2)$ design where $s_1$ and $s_2$ are the smallest integers satisfying the following
    \[s_1\equiv g_1-(\rho-1)^2/4\quad \textrm{and} \quad s_2\equiv g_1-(\rho^2-1)/4,\]
    where $\rho=a\beta^{-1}$ which follows from \cref{lem:twointersectionnumbers}.

    Finally, consider the special case where $n=d(d+1)/2$. We know that $|\mathcal B_1|=n-d=\frac{1}{2}d(d-1)$. Therefore by Proposition 5.6 in \cite{Cameron_Lint_1991} (and also Proposition 13 in \cite{quasi-2-designs}), we have that $(\Gamma, \mathcal B_1)$ is a $4$-design and therefore also a $3$-design.
\end{proof}

\section*{Acknowledgements}
I.\ J.\ would like to thank James Wilson for the numerous helpful books and discussions on non-degenerate Hermitian scalar products. E.\ J.\ K.\ would like to thank Nicolas Bolle and Gene Kopp for engaging conversations on the structure of frames over finite fields during the Summer of Frame Theory 2021.

\bibliographystyle{plain}
\bibliography{StructureFramesFiniteFields.bbl}

\end{document}